\documentclass[11pt]{amsart}

\usepackage[a4paper,hmargin=3.5cm,vmargin=3.5cm]{geometry}
\usepackage{amsfonts,amssymb,amscd,amstext}
\usepackage{graphicx}
\usepackage[dvips]{epsfig}

\usepackage{fancyhdr}
\pagestyle{fancy}
\fancyhf{}

\usepackage{times}

\usepackage{enumerate}
\usepackage{titlesec}
\usepackage{mathrsfs}

\pretolerance=2000
\tolerance=3000

\def\Ccal{\mathcal{C}}
\def\Acal{\mathcal{A}}
\def\Ncal{\mathcal{N}}
\def\Mcal{\mathcal{M}}

\def\Hcal{\mathcal{H}}
\def\Kcal{\mathcal{K}}
\def\Bcal{\mathcal{B}}
\def\Fcal{\mathcal{F}}
\def\Gcal{\mathcal{G}}
\def\Pcal{\mathcal{P}}
\def\Ocal{\mathcal{O}}
\def\Qcal{\mathcal{Q}}

\def\Ucal{\mathcal{U}}

\def\Scal{\mathcal{S}}

\def\Cscr{\mathscr{C}}
\def\I{\mathfrak{I}}
\def\J{\mathfrak{J}}

\def\c{\mathbb{C}}
\def\r{\mathbb{R}}
\def\n{\mathbb{N}}
\def\z{\mathbb{Z}}
\def\k{\mathbb{K}}
\def\s{\mathbb{S}}
\def\p{\mathbb{P}}

\def\Agot{\mathfrak{A}}
\def\hgot{\mathfrak{h}}
\def\mgot{\mathfrak{m}}
\def\ggot{\mathfrak{g}}
\def\igot{\mathfrak{i}}
\def\jgot{\mathfrak{j}}
\def\pgot{\mathfrak{p}}
\def\div{\mathfrak{Div}}

\newcommand{\sub}[1]{\text{\b{$#1$}}}


\headheight=13pt
\headsep 0.5cm
\topmargin 0.5cm
\textheight = 49\baselineskip
\textwidth 14cm
\oddsidemargin 1cm
\evensidemargin 1cm

\setlength{\parskip}{0.5em}

\titleformat{\section}
{\filcenter\bfseries\large} {\thesection{.}}{0.2cm}{}
\titleformat{\subsection}[runin]
{\bfseries} {\thesubsection{.}}{0.15cm}{}[.]
\titleformat{\subsubsection}[runin]
{\em}{\thesubsubsection{.}}{0.15cm}{}[.]

\usepackage[up,bf]{caption}

\newtheorem{theorem}{Theorem}[section]
\newtheorem{proposition}[theorem]{Proposition}
\newtheorem{claim}[theorem]{Claim}
\newtheorem{lemma}[theorem]{Lemma}
\newtheorem{corollary}[theorem]{Corollary}
\newtheorem{remark}[theorem]{Remark}
\newtheorem{definition}[theorem]{Definition}

\theoremstyle{definition}




\numberwithin{equation}{section}
\numberwithin{figure}{section}

\usepackage{color}

\begin{document}

\fancyhead[LO]{Approximation theory for non-orientable minimal surfaces} 
\fancyhead[RE]{A. Alarc\'{o}n$\,$  and$\,$ F.J. L\'{o}pez} 
\fancyhead[RO,LE]{\thepage} 

\thispagestyle{empty}

\thispagestyle{empty}

\vspace*{1cm}
\begin{center}
{\bf\LARGE Approximation theory for non-orientable minimal surfaces and applications}

\vspace*{0.5cm}

{\large\bf Antonio Alarc\'{o}n$\;$ and$\;$ Francisco J.\ L\'{o}pez}

\end{center}

\footnote[0]{\vspace*{-0.4cm}

\noindent A.\ Alarc\'{o}n,  F.\ J.\ L\'{o}pez

\noindent Departamento de Geometr\'{\i}a y Topolog\'{\i}a, Universidad de Granada, E-18071 Granada, Spain.

\noindent e-mail: {\tt alarcon@ugr.es}, {\tt fjlopez@ugr.es}
}


\begin{quote}
{\small
\noindent {\bf Abstract}\hspace*{0.1cm} We prove a version of the classical Runge and Mergelyan uniform approximation theorems for non-orientable minimal surfaces in Euclidean $3$-space $\r^3.$ Then, we obtain some geometric applications. Among them, we emphasize the following ones:
\begin{itemize}
\item A Gunning-Narasimhan type theorem for non-orientable conformal surfaces.
\item An existence theorem for non-orientable minimal surfaces in $\r^3,$ with arbitrary conformal structure, properly projecting into a plane.
\item An existence result for non-orientable minimal surfaces in $\r^3$ with arbitrary conformal structure and Gauss map omitting one projective direction.
\end{itemize}

\vspace*{0.1cm}

\noindent{\bf Keywords}\hspace*{0.1cm} Uniform approximation, non-orientable minimal surfaces.

\vspace*{0.1cm}

\noindent{\bf Mathematics Subject Classification (2010)}\hspace*{0.1cm} 49Q05, 30E10.
}
\end{quote}


\section{Introduction}\label{sec:intro}

The Runge and Mergelyan theorems are the central results in the theory of uniform approximation by holomorphic functions in one complex variable. The former, which dates back to 1885, asserts that if the complement $\c\setminus K$ of a compact set $K\subset\c$ has no relatively compact connected components, then every holomorphic function in (an open neighborhood of) $K$ can be approximated, uniformly on $K$, by holomorphic  functions on $\c$; cf. \cite{Runge}. If $K\subset\c$ is an arbitrary compact set, then Mergelyan's theorem \cite{Mergelyan}, which dates back to 1951, ensures that continuous functions $K\to \c$, holomorphic in the interior $K^\circ$ of $K$, can be approximated uniformly on $K$ by holomorphic functions in open neighborhoods of $K$ in $\c$.
In 1958 Bishop \cite{Bishop-RM} extended these results to Riemann surfaces (see \cite{Forster} for a modern proof using functional analysis):

\noindent {\bf Runge-Mergelyan's Theorem.} 
{\em Let $\Ncal$ be an open Riemann surface (i.e., non-compact) and let $K\subset \Ncal$ be a compact set such that $\Ncal\setminus K$ has no relatively compact connected components. Then any continuous function $K\to\c$, holomorphic in the interior $K^\circ$ of $K$, can be uniformly approximated on $K$ by holomorphic functions $\Ncal\to\c$.}

A compact subset $K\subset \Ncal$ satisfying the hypothesis of the above theorem is said to be a {\em Runge set} in the open Riemann surface $\Ncal$. Runge and Mergelyan's theorems admit plenty of generalizations; the extension of Runge's theorem to functions of several complex variables is known as the Oka-Weil theorem (see e.g.\ \cite{Hormander-SCV}), and in the more general setting, maps $S\supset K\to O$ from a holomorphically convex set $K$ of a Stein manifold $S$ to an Oka manifold $O$ satisfy the Runge property (see \cite{Forstneric-book} for a good reference).

On the other hand, conformal minimal immersions of open Riemann surfaces into Euclidean space are harmonic functions. This basic fact has strongly influenced the theory of minimal surfaces, furnishing it of powerful tools coming from Complex Analysis. In particular, Runge's theorem (combined with the L\'opez-Ros transformation for minimal surfaces; see \cite{LopezRos}) has been the key tool for constructing complete hyperbolic minimal surfaces in $\r^3$ of finite topology; see \cite{JorgeXavier,Nadirashvili,Morales} for pioneering papers.
However, the direct application of Runge's theorem has a limited reach, and it seems to be insufficient for constructing minimal surfaces with more complicated geometry. With the aim of overcoming this constraint, the authors \cite[Theorem 4.9]{AL-proper} obtained a Runge-Mergelyan type theorem for conformal minimal immersions of open Riemann surfaces into $\r^3.$ This result has been a versatile tool for constructing both minimal surfaces in $\r^3$ and null holomorphic curves in the Complex $3$-space $\c^3$; see \cite{AL-proper,AFL-1,AL-CY,AL-Israel,AL-conjugada} for a number of applications. For instance, and in contrast to the severe restrictions imposed by the use of the L\'opez-Ros transformation, it allows one to prescribe the conformal structure of the examples.

In the same spirit, a Runge-Mergelyan type theorem for a large family of directed holomorphic immersions of open Riemann surfaces into $\c^n$ (including null curves), $n\geq 3$,  has been recently shown, with different techniques, by Alarc\'on and Forstneri\v c \cite{AF-2}.

In this paper we focus on non-orientable minimal surfaces in $\r^3$. This subject should not be considered as a minor or secondary one; on the contrary, non-orientable surfaces present themselves quite naturally in the origin itself of minimal surface theory (recall for instance that a M\"obius minimal strip can be obtained by solving a simple Plateau problem; see \cite{Douglas-non} for more information), and they present a rich and interesting geometry. Non-orientable minimal surfaces were first studied systematically by Lie \cite{Lie-non} in the third quarter of the 19th century; the development of their global theory was began by Meeks \cite{Meeks-8pi}. A particular issue is that constructing non-orientable surfaces via Weierstrass representation is in general hard, due to the higher subtlety of the period problem. Runge's theorem has been already used ad hoc in several constructions of complete non-orientable minimal surfaces in $\r^3$ (see \cite{Lopez-slab,LopezMartinMorales-non,FerrerMartinMeeks}); however, as in the orientable case, its direct use seems to be not enough for more involved constructions. 

The aim of this paper is to prove a Runge-Mergelyan type theorem for non-orientable minimal surfaces in $\r^3$. For a precise statement of our main result, the following notation is required. Every non-orientable minimal surface $M\subset \r^3$ can be represented by a triple $(\Ncal,\I,X)$, where $\Ncal$ is an open Riemann surface, $\I\colon \Ncal\to\Ncal$ is an antiholomorphic involution without fixed points, and $X\colon \Ncal\to\r^3$ is an {\em $\I$-invariant} conformal minimal immersion (that is, satisfying $X\circ\I=X$) such that $M=X(\Ncal)$; see \cite{Meeks-8pi} and Subsec.\ \ref{sec:minimal} for details. We say that a subset $S$ of $\Ncal$ is {\em $\I$-admissible} (see Def.\ \ref{def:admi}) if it is Runge in $\Ncal$, $\I(S)=S$, and $S=R_S\cup C_S$, where $R_S:=\overline{S^\circ}$ consists of a finite collection of pairwise disjoint compact regions in $\Ncal$ with $\Ccal^1$ boundary, and $C_S:=\overline{S\setminus R_S}$ is a finite collection of pairwise disjoint analytical Jordan arcs, meeting $R_S$ only in their endpoints, and such that their intersections with the boundary $b R_S$ of $R_S$ are transverse. Finally, we say that a $\Ccal^1$ map $Y\colon S\to\r^3$ is an {\em $\I$-invariant generalized minimal immersion} (see Subsec.\ \ref{sec:minimal-I}) if $Y|_{R_S}$ is a conformal minimal immersion, $Y|_{C_S}$ is regular, and $Y\circ\I=Y$. 

Our main result asserts that:
\begin{theorem}\label{th:intro}
Let $\Ncal$ be an open Riemann surface, let $\I\colon \Ncal\to\Ncal$ be an antiholomorphic involution without fixed points, and let $S\subset\Ncal$ be an $\I$-admissible set.

Then any $\I$-invariant generalized minimal immersion $S\to\r^3$ can be $\Ccal^1$ uniformly approximated on $S$ by $\I$-invariant conformal minimal immersions $\Ncal\to\r^3$.
\end{theorem}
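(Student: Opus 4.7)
The plan is to adapt the orientable Runge--Mergelyan theorem of \cite[Theorem 4.9]{AL-proper} to the $\I$-equivariant setting by working with Weierstrass data rather than with the immersion itself. Recall that a conformal minimal immersion $X\colon\Ncal\to\r^3$ is encoded by its Weierstrass $1$-form $\Phi:=\partial X$, a holomorphic $\c^3$-valued $(1,0)$-form with values in the punctured null quadric
\[
\Qcal:=\{z\in\c^3\colon z_1^2+z_2^2+z_3^2=0\}\setminus\{0\},
\]
subject to $\mathrm{Re}\int_\gamma\Phi=0$ for every loop $\gamma$ in $\Ncal$. A direct computation shows that $X\circ\I=X$ is equivalent to the relation $\I^*\Phi=\overline{\Phi}$; equivalently, $\Phi$ is fixed by the antilinear involution $\J(\Phi):=\overline{\I^*\Phi}$ on holomorphic $\c^3$-valued $(1,0)$-forms, which preserves $\Qcal$.

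I would proceed in three steps. First, from the given $\I$-invariant generalized immersion $Y$ on $S$, read off a continuous $\J$-invariant $\Qcal$-valued $1$-form $\Phi_0$ on $S$ that is holomorphic on $R_S^\circ$ and smooth along $C_S$. Second, approximate $\Phi_0$ uniformly on $S$ by a $\J$-invariant null holomorphic $1$-form $\widetilde\Phi$ on all of $\Ncal$. Since $S$ is $\I$-admissible and $\I$ acts freely, one can choose an exhaustion of $\Ncal$ by $\I$-invariant Runge sets and argue via a fundamental-subdomain strategy: apply \cite[Theorem 4.9]{AL-proper} to the data on a fundamental piece $S_+$ (with boundary data matched via $\J\Phi_0$ along the overlap with $\I(S_+)$) and propagate the approximant to $\I(S_+)$ by the formula $\widetilde\Phi=\J\widetilde\Phi$. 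Extension across successive annular layers of the exhaustion is then carried out by $\J$-equivariant null sprays, built from the L\'opez--Ros deformation applied to $\I$-invariant seed data, which is automatically $\J$-equivariant.

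The main obstacle is the $\I$-equivariant period problem. For $X:=2\,\mathrm{Re}\int\widetilde\Phi$ to be a well-defined $\I$-invariant immersion of $\Ncal$, the real periods of $\widetilde\Phi$ must vanish on a basis of $H_1(\Ncal;\z)$. The involution $\I_*$ splits this group into invariant and anti-invariant summands, and the condition $\J\widetilde\Phi=\widetilde\Phi$ forces the period map to respect this splitting; one therefore needs an equivariant flexibility principle guaranteeing sufficiently many $\J$-invariant null perturbations with prescribed $\I$-equivariant periods. I would establish this by constructing $\J$-invariant period-killing sprays, using $\I$-invariant closed $1$-forms dual to an $\I$-equivariant basis of $H_1$ (obtainable via Hodge theory on the Klein surface $\Ncal/\I$), and verifying that the associated period map is a submersion on the relevant invariant slice. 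An implicit function argument then yields $\widetilde\Phi$ with vanishing real periods, and $X=2\,\mathrm{Re}\int\widetilde\Phi$, suitably normalized, is the required $\I$-invariant conformal minimal immersion $\Ccal^1$-approximating $Y$ on $S$.
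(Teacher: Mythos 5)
Your high-level plan — encode the immersion by its Weierstrass $(1,0)$-form $\Phi$ with values in the null quadric, translate $X\circ\I=X$ into $\I^*\Phi=\overline\Phi$, approximate within the class of $\J$-invariant null holomorphic forms, and close the argument via an equivariant period-flexibility statement — matches the architecture of the paper's proof (Theorem~\ref{th:Mergelyan}, built on Lemma~\ref{lem:weiaprox} and its period Claim~\ref{ass:regular}). However, the specific route you propose for producing a $\J$-invariant approximant contains a genuine gap, and the paper resolves it quite differently.

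The gap is in the fundamental-domain step. Since $\I$ acts freely and $\Ncal/\I$ is non-orientable, the covering $\Ncal\to\Ncal/\I$ admits no continuous global section, so any ``fundamental piece'' $S_+$ must have non-trivial overlap (or a non-empty shared boundary) with $\I(S_+)$. Applying the orientable Runge--Mergelyan theorem \cite[Theorem~4.9]{AL-proper} on $S_+$ produces an approximant $\widetilde\Phi_+$ that need not satisfy any compatibility with $\J\widetilde\Phi_+$ on that overlap; and because the target $\Qcal$ is not a linear space, you cannot repair the mismatch by averaging $\widetilde\Phi_+$ with $\J\widetilde\Phi_+$ as one would for scalar functions. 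Your remark that the L\'opez--Ros deformation applied to $\I$-invariant seed data is ``automatically $\J$-equivariant'' is also not quite right: the L\'opez--Ros parameter function $h$ must itself satisfy $h\circ\I=\overline h$ (or a companion constraint) for the resulting data to remain in the $\J$-invariant class, and this has to be built in, not inherited. The paper circumvents both problems by never leaving the $\I$-invariant category. For \emph{linear} objects it uses symmetrization: any approximant $h$ is replaced by $\tfrac12\bigl(h+\overline{h\circ\I}\bigr)$, which is automatically in $\Fcal_{\hgot,\I}$ and still approximates (Lemma~\ref{lem:runge}). For the nonlinear null constraint it factors $\Phi$ through the pair $(g,\phi_3)$ via \eqref{eq:recoverX} and deforms multiplicatively by $\bigl(e^{\imath h_1}g,\,e^{h_2}\phi_3\bigr)$ with $h_1,h_2\in\Fcal_{\hgot,\I}(\overline W)$: because the parameters already live in the $\I$-compatible space, the deformed data stays in $\Omega_{\hgot,\I}^3$ by construction, with no gluing to perform. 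Lemmas~\ref{lem:funaprox}, \ref{lem:formaprox}, and \ref{lem:gaprox} (proved by the Gunning--Narasimhan-type Proposition~\ref{pro:Icero} plus a careful logarithmic factorization $f=v e^u$ respecting $\I$) supply the needed approximants for $g$ and $\phi_3$ individually, and Claim~\ref{ass:regular} shows the Fr\'echet derivative of the resulting period map has full rank on $\Fcal_{\hgot,\I}(\overline W)^2$, giving the implicit-function-theorem step you anticipated. Your invocation of ``$\J$-invariant period-killing sprays built from Hodge theory on the Klein surface'' gestures at the same flexibility phenomenon, but the paper instead works with the concrete $\I$-basis of $\Hcal_1(W,\r)$ of Definition~\ref{def:ibasis} and the isomorphism \eqref{eq:isom}, and verifies surjectivity by an explicit duality argument (integration by parts against $d(\phi_i/\phi_j)$) rather than by an abstract spray construction. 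Replacing your fundamental-domain extension step with this direct $\I$-invariant deformation scheme is the missing ingredient.
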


Concerning the proof, it is important to point out that the compatibility condition with respect to the antiholomorphic involution and the higher difficulty of the period problem require a much more involved and careful analysis than in the orientable case; cf.\ \cite{AL-proper}.

Theorem \ref{th:intro} has many geometric applications. In Theorem \ref{th:proper} we show that, for any  open Riemann surface $\Ncal$ and antiholomorphic involution $\I\colon\Ncal\to\Ncal$ without fixed points, there exist $\I$-invariant conformal minimal immersions $\Ncal\to\r^3$ properly projecting into a plane (cf. \cite{AL-proper} for the analogous result in the orientable case). This links with an old question by Schoen and Yau \cite[p.\ 18]{SchoenYau-harmonic}; see \cite{AL-proper,AL-conjugada} for a good reference.  We also prove an existence theorem of complete conformal $\I$-invariant minimal immersions $\Ncal\to\r^3$ with a prescribed coordinate function; see Theorem \ref{th:coordenada}. As a consequence, in Corollary \ref{co:coordenada} we exhibit complete non-orientable minimal surfaces in $\r^3$ whose Gauss map omits one point of the projective plane $\r\p^2$ (see \cite{AFL-1} for the orientable framework).  Other geometric applications of Theorem \ref{th:intro} will be obtained in the forthcoming paper \cite{AL-non1}.

Theorem \ref{th:intro} follows from the more general Theorem \ref{th:Mergelyan}, which also deals with the flux map of the approximating surfaces. In particular, Theorem \ref{th:Mergelyan} implies the analogous result of Theorem \ref{th:intro} for null holomorphic curves $F\colon \Ncal\to\c^3$ enjoying the symmetry $F\circ\I=\overline F$; see Corollary \ref{co:null}.  
We also derive a Runge-Mergelyan type theorem for harmonic functions $h\colon \Ncal\to\r$ satisfying $h\circ\I=h$ (see Theorem \ref{th:harmonic}). 

Finally, in a different line of applications, we prove an extension of the classical Gunning-Narasimhan theorem \cite{GunningNarasimhan} (see also \cite{KusunokiSainouchi}); more specifically, we show that, for any open Riemann surface $\Ncal$ and any antiholomorphic involution $\I\colon \Ncal\to\Ncal$ without fixed points, there exist holomorphic $1$-forms $\vartheta$ on $\Ncal$ with $\I^*\vartheta=\overline{\vartheta}$ and prescribed periods and canonical divisor (see Theorem \ref{th:GN}).

\medskip

\noindent{\bf Outline of the paper.} The necessary notation and background on non-orientable minimal surfaces in $\r^3$ is introduced in Sec.\ \ref{sec:prelim}. In Sec.\ \ref{sec:admi} we describe the compact subsets involved in the Mergelyan type approximation, and define the notion of conformal non-orientable minimal immersion from such a subset into $\r^3$. In Sec.\ \ref{sec:approx} we prove several preliminary approximation results that flatten the way to the proof of the main theorem in Sec.\ \ref{sec:Mer}. Finally, the applications are derived in Sec.\ \ref{sec:apli}.


\section{Preliminaries}\label{sec:prelim}

Let $\|\cdot\|$ denote the Euclidean norm in $\k^n$ ($\k=\r$ or $\c$). Given a compact topological space $K$ and a continuous map $f\colon K\to\mathbb{K}^n,$ we denote by 
\[
\|f\|_{0,K}:= \max_K \big\{ \|f(p)\|\colon p\in K\}
\]
the maximum norm of $f$ on $K.$ The corresponding space of continuous functions on $K$ will be endowed with the $\Ccal^0$ topology associated to $\|\cdot\|_{0,K}.$

Given a topological surface $N,$ we denote by $b N$ the (possibly non-connected) $1$-dimensional topological manifold determined by its boundary points. Given a subset $A \subset N,$ we denote by $A^\circ$ and $\overline{A}$ the interior and the closure of $A$ in $N$, respectively. Open connected subsets of $N\setminus b N$ will be called {\em domains} of $N$, and those proper connected topological subspaces of $N$ being compact surfaces with boundary will said to be  {\em regions} of $N$.


\subsection{Riemann surfaces and non-orientability}\label{sec:riemann}

It is well known that any Riemann surface is orientable; in fact, the conformal structure of the surface induces a (positive) orientation on it. In this subsection, we describe the notion of {\em non-orientable Riemann surface}.

A Riemann surface $\Ncal$ is said to be {\em open} if it is non-compact and $b \Ncal =\emptyset.$ We denote by $\partial$ the global complex operator given by $\partial|_U=\frac{\partial}{\partial z} dz$ for any conformal chart $(U,z)$ on $\Ncal.$ 

\begin{definition}
Let $\sub{\Ncal}$ be a smooth non-orientable surface with empty boundary. A system of coordinates $\Cscr$ on $\sub{\Ncal}$ is said to be a {\em conformal structure} on $\sub{\Ncal}$ if the change of coordinates is conformal or anticonformal. 
The couple $(\sub{\Ncal},\Cscr)$ is said to be a {\em non-orientable Riemann surface}. If there is no place for ambiguity, we simply write $\sub{\Ncal}$ instead of $(\sub{\Ncal},\Cscr).$
\end{definition}

\begin{definition}\label{def:non}
Let $\sub{\Ncal}\equiv (\sub{\Ncal},\Cscr)$ be a non-orientable Riemann surface. Denote by $\pi\colon \Ncal\to \sub{\Ncal}$ the oriented 2-sheeted covering of $\sub{\Ncal},$ and call $\I\colon \Ncal\to \Ncal$ the deck transformation of $\pi.$ Call  $\pi^*(\Cscr)$ the holomorphic system of coordinates in $\Ncal$ determined by the positively oriented lifts by $\pi$ of the charts in $\Cscr.$ 

Notice that the couple $\Ncal \equiv (\Ncal,\pi^*(\Cscr))$ is a (connected) open Riemann surface and  $\I$ is an antiholomorphic involution in $\Ncal$ without fixed points. The conformal map $\pi\colon \Ncal \to \sub{\Ncal}$ is said to be the conformal orientable two-sheeted covering of $\sub{\Ncal}.$

Objects related to $\sub{\Ncal}$ will be denoted with underlined text (for instance: $\sub{S},$ $\sub{X},$ etc.), whereas those related to $\Ncal$ will be not.
\end{definition}

As a consequence of Def.\ \ref{def:non}, the non-orientable Riemann surface $\sub{\Ncal}$ can be  naturally identified with the orbit space $\Ncal/\I,$ and the covering map $\pi$ with the natural projection $\Ncal \to \Ncal/\I.$ In other words, a non-orientable Riemann surface $\sub{\Ncal}$ is nothing but a connected open Riemann surface $\Ncal$ equipped with an antiholomorphic involution $\I$ without fixed points.

From now on in Section \ref{sec:prelim}, let $\sub{\Ncal},$ $\Ncal,$ $\pi,$ and $\I,$ be as in Def.\ \ref{def:non}.

\begin{definition}\label{def:saturated}
A subset $A\subset \Ncal$ is said to by $\I$-invariant if  $\I(A)=A,$  or equivalently, $\pi^{-1}(\pi(A))=A.$
If $A$ is $\I$-invariant,  we write  $\sub{A}=\pi(A).$ Likewise, given $\sub{B}\subset\sub{\Ncal},$ we write $B$ for the $\I$-invariant set $\pi^{-1}(\sub{B}).$
\end{definition}

\begin{definition}\label{def:invariant}
Let $A$ be an $\I$-invariant subset in $\Ncal$ and let $f\colon A\to\r^n$ be a map ($n\in\n$). The map $f$ is said to be {\em $\I$-invariant} if 
\[
f\circ (\I|_A)=f .
\]
In this case, we denote by $\sub{f}$ the only map $\sub{f}\colon \sub{A}\to\r^n$ satisfying $f=\sub{f}\circ(\pi|_A).$
Likewise, given a map $\sub{f}\colon \sub{A}\to\r^n$ we denote by $f$ the $\I$-invariant map $f=\sub{f}\circ(\pi|_A)\colon A\to\r^n.$
\end{definition}

For any set $A \subset \Ncal,$ we denote by $\div(A)$  the free commutative group of divisors of $A$ with multiplicative notation. If $D=\prod_{i=1}^n Q_i^{n_i} \in \div(A),$ where $n_i \in \z\setminus\{0\}$ for all $i\in\{1,\ldots,n\},$ we set ${\rm supp}(D):=\{Q_1,\ldots,Q_n\}$ the support of $D.$ 
A divisor $D \in \div(A)$ is said to be integral if $D=\prod_{i=1}^n Q_i^{n_i}$ and $n_i\geq 0$ for all $i.$ Given $D_1,$ $D_2 \in \div(A),$ $D_1 \geq D_2$ means that $D_1 D_2^{-1}$ is  integral.
If $A$ is $\I$-invariant, then we denote by $\div_\I(A)$ the group of $\I$-invariant divisors of $A$; that is to say, satisfying $\I(D)=D.$

In the sequel, $W$ will denote an $\I$-invariant open subset of $\Ncal.$ 

We denote by
\begin{itemize}
\item $\Fcal_{\hgot,\I}(W)$ the real vectorial space of holomorphic functions $f$ on $W$ such that $f\circ (\I|_W)=\overline{f},$ 
\item $\Fcal_{\mgot,\I}(W)$ the real vectorial space of meromorphic functions $f$ on $W$ such that $f\circ (\I|_W)=\overline{f},$
\item $\Omega_{\hgot,\I}(W)$ the real vectorial space of holomorphic $1$-forms $\theta$ on $W$ such that $\I^*(\theta)=\overline{\theta},$ and 
\item $\Omega_{\mgot,\I}(W)$ the real vectorial space of meromorphic $1$-forms $\theta$ on $W$ such that $\I^*(\theta)=\overline{\theta}$ 
\end{itemize}
(here, and from now on, $\bar{\cdot}$ means complex conjugation). We also denote by
\begin{itemize}
\item $\Gcal_\I(W)$ the family of meromorphic functions $g$ on $W$ satisfying $g\circ(\I|_W)=-1/\overline{g}.$
\end{itemize}
By elementary symmetrization arguments, it is easy to check that $\Fcal_{\hgot,\I}(W)$, $\Omega_{\hgot,\I}(W)\neq \emptyset$.
It is also known that $\Gcal_\I(\Ncal)\neq\emptyset$ when $\Ncal$ is a compact Riemann surface (see \cite{Martens}).  As application of Theorem \ref{th:Mergelyan}, we will prove that in fact every open non-orientable Riemann surface $(\Ncal,\I)$ carries conformal maps into the projective plane omitting one point (see Corollary \ref{co:coordenada}); in particular $\Gcal_\I(\Ncal)\neq\emptyset$.

Let us recall some well-known topological facts regarding non-orientable surfaces.

In the remaining of this subsection, we will assume that $W$ is a domain of finite topology. Then $(W,\I|_W)$ is topologically equivalent to $(\Scal\setminus \{P_1,\ldots,P_{k+1},\J(P_1),\ldots,\J(P_{k+1})\},\J)$, where  $\Scal$ is a compact 
surface  of genus $\nu$, $\J:\Scal \to \Scal$ is an orientation reversing involution without fixed points, and $\{P_1,\ldots,P_{k+1}\}\subset \Scal$, $k\in \n\cup\{0\}$.

As a consequence, the first homology groups $\Hcal_1(W,\z)$ and $\Hcal_1(W,\r)$ of $W$ are of dimension $2\nu_0+1$, where  $\nu_0:=\nu+k$. Furthermore, $\Hcal_1(W,\r)$ admits an $\I$-basis accordingly to the following definition:
\begin{definition} \label{def:ibasis}
A basis $\Bcal=\{c_0,c_1,\ldots,c_{\nu_0},d_1,\ldots,d_{\nu_0}\}$ of $\Hcal_1(W,\r)$ is said to be an $\I$-basis if 
\begin{itemize}
\item $c_j:=\gamma_j -\I_*(\gamma_j)$, $j=0,1,\ldots,\nu_0$, and
\item $d_j:=\gamma_j+\I_*(\gamma_j)$, $j=1,\ldots,\nu_0$,
\end{itemize}
for some closed curves $\{\gamma_j\colon j=0,\ldots,\nu_0\}\subset \Hcal_1(W,\z)$.
Observe that 
\begin{equation}\label{eq:c+d}
\text{$\I_*(c_j)=-c_j$ \quad and \quad $\I_*(d_j)=d_j$ \quad for all $j$.}
\end{equation}
\end{definition}

Let  $\Hcal^1_{{\rm hol,\I}}(W)$ be the first real De
Rham  cohomology group $\Omega_{\hgot,\I}(W)/\sim$, where as usual $\sim$ denotes the equivalence relation ``{\em the difference is exact}''. Notice that \eqref{eq:c+d} gives that $\Re \int_{c_j} \tau=0$ and $\Im \int_{d_j} \tau=0$ for all $j$ and  $\tau\in\Omega_{\hgot,\I}(W)$. Further, basic cohomology theory gives that  the map
\begin{equation}\label{eq:isom}
\text{$\Hcal^1_{{\rm hol},\I}(W) \to \r^{\nu_0+1}\times \r^{\nu_0}$, \quad $[\tau] \mapsto \Big( \big(-\imath \int_{c_j} \tau\big)_{j=0,\ldots,\nu_0}\,,\,  \big(\int_{d_j} \tau \big)_{j=1,\ldots,\nu_0}\Big)$,}
\end{equation}
is a (real) linear isomorphism for any $\I$-basis $\{c_0,c_1,\ldots,c_{\nu_0},d_1,\ldots,d_{\nu_0}\}$ of $\Hcal_1(W,\r)$.


\subsection{Non-orientable minimal surfaces} \label{sec:minimal}

In this subsection we describe the Weierstrass representation formula for non-orientable minimal surfaces, and introduce some notation.

\begin{definition}\label{def:M(A)}
A map $\sub{X}\colon \sub{\Ncal}\to\r^3$ is said to be a {\em conformal non-orientable minimal immersion} if the $\I$-invariant map
\[
X:=\sub{X}\circ\pi\colon \Ncal\to \r^3\enskip \text{(see Def.\ \ref{def:invariant})}
\]
is a conformal minimal immersion. In this case, $X(\Ncal)=\sub X(\sub \Ncal)\subset\r^3$ is a non-orientable minimal surface.

For any $\I$-invariant subset $A\subset \Ncal,$  we denote by $\Mcal_\I(A)$ the space of $\I$-invariant conformal minimal immersions of $\I$-invariant open subsets  of $\Ncal$ containing $A$ into $\r^3.$ 
\end{definition}

Let $A \subset \Ncal$ be an $\I$-invariant subset, and let $X\in \Mcal_\I(A)$. Denote by $\phi_j=\partial X_j,$ $j=1,2,3,$ and $\Phi=\partial X\equiv (\phi_j)_{j=1,2,3}.$  The $1$-forms  $\phi_j$ are holomorphic (on an open neighborhood of $A$),  have no real periods, and satisfy
\begin{equation}\label{eq:conformal}
\sum_{j=1}^3 \phi_j^2=0
\end{equation}
and
\begin{equation}\label{eq:Wdata-non}
\I^*\Phi=\overline{\Phi}
\end{equation}
(see \cite{Meeks-8pi}). The intrinsic metric in (an open neighborhood of) $A$ is given by
$ds^2=\sum_{j=1}^3 |\phi_j|^2;$ hence 
\begin{equation}\label{eq:immersion}
\text{$\sum_{j=1}^3 |\phi_j|^2$ vanishes nowhere on $A$}.
\end{equation}
By definition, the triple $\Phi$ is said to be the {\em Weierstrass representation} of $X.$ The meromorphic function \begin{equation}\label{eq:g}
g=\frac{\phi_3}{\phi_1- \imath \phi_2}
\end{equation}
(here, and from now on, we denote by $\imath=\sqrt{-1}$)
corresponds to the Gauss map of $X$ up to the stereographic projection, and 
\begin{equation}\label{eq:recoverX}
\Phi=\Big(\frac{1}{2}\big(\frac{1}{g}-g\big),\frac{\imath}{2}\big(\frac{1}{g}+g\big),1\Big)\phi_3 
\end{equation}
(see \cite{Osserman-book}). It follows from \eqref{eq:Wdata-non} and \eqref{eq:g} that the complex Gauss map $g\colon A\to\overline{\c}:=\c\cup\{\infty\}$ of $X$ satisfies that
\begin{equation}\label{eq:g-non}
g\circ(\I|_A) = -\frac{1}{\overline{g}}.
\end{equation}

\begin{remark}\label{rem:G}
Denote by $\Agot\colon \overline{\c}\to\overline{\c}$ the antipodal map $\Agot(z)=-1/\overline{z},$ by $\r\p^2=\overline{\c}/\Agot$ the projective plane, and by $\pi_\Agot\colon \overline{\c}\to\r\p^2$ the orientable 2-sheeted covering of $\r\p^2.$ 

Every meromorphic $g$ in  (an open neighborhood of) $A$ satisfying \eqref{eq:g-non}  induces a unique conformal map $G\colon \sub{A}\to\r\p^2$ such that $G\circ (\pi|_A)=\pi_\Agot\circ g.$  
\end{remark}

\begin{definition}\label{def:gauss-non}
The conformal map $G\colon \sub{A}\to\r\p^2$ induced by the complex Gauss map $g$ of $X$ is said to be the {\em complex Gauss map} of the conformal non-orientable minimal immersion $\sub{X}.$ 
\end{definition}

Conversely, any vectorial holomorphic $1$-form $\Phi=(\phi_j)_{j=1,2,3}$ on  (an open neighborhood of) $A$ {\em without real periods}, satisfying \eqref{eq:conformal}, \eqref{eq:Wdata-non}, and \eqref{eq:immersion}, determines an $\I$-invariant conformal minimal immersion $X\colon A\to\r^3$ by the expression
\[
X=\Re\big(\int\Phi \big)
\]
($\Re(\cdot)$ denotes real part); hence, a conformal non-orientable minimal immersion $\sub{X}\colon \sub{A}\to\r^3.$ (See \cite{Meeks-8pi}.) 
By definition, the couple $(\Phi,\I)$ is said to be the {\em Weierstrass representation} of $\sub{X}.$

\begin{remark} \label{re:periodos}
A vectorial holomorphic $1$-form $\Phi$ on (an open neighborhood of) $A$ satisfying  \eqref{eq:Wdata-non} has no real periods if and only if
$$\text{ $\int_\gamma \Phi=0\quad$ for any $\gamma \in \Hcal_1(A,\z)$ with $\I_*(\gamma)=\gamma$.}$$
\end{remark}

To finish this subsection we present the {\em flux} of a conformal minimal immersion.

\begin{definition} \label{def:conor}
Let $A$ be an $\I$-invariant subset in $\Ncal,$ let $X\in\Mcal_\I(A),$ (see Def.\ \ref{def:M(A)}) and let $\gamma(s)$ be an arc-length parameterized curve in $A.$ The {\em conormal vector field}  of $X$ along $\gamma$ is the unique unitary tangent vector field  $\mu$ of $X$ along $\gamma$ such that $\{d X(\gamma'(s)),\mu(s)\}$ is a positive basis for all $s.$

If $\gamma$ is  closed, the number $\pgot_X(\gamma):=\int_\gamma \mu(s) ds$ is said to be  the {\em flux}  of $X$ along $\gamma$. 
\end{definition}

Given an $\I$-invariant subset $A$ in $\Ncal,$ and $X\in\Mcal_\I(A),$ it is easy to check that $\pgot_X(\gamma)=\Im\int_{\gamma} \partial X$
(here $\Im(\cdot)$ denotes imaginary part), and that the flux map $$\pgot_X\colon \Hcal_1(A,\z)\to \r^3$$ is a group morphism. Furthermore, since $X$ is $\I$-invariant and $\I$ reverses the orientation, then the flux map $\pgot_X\colon \Hcal_1(A,\z)\to\r^3$ of $X$ satisfies 
\begin{equation}\label{eq:flux-non}
\pgot_X(\I_*(\gamma))=-\pgot_X(\gamma)\quad \forall \gamma\in \Hcal_1(A,\z) .
\end{equation}
By definition, the couple $(\pgot_X,\I)$ is said to be the flux map of $\sub{X}.$


\section{Admissible subsets for the Mergelyan approximation}\label{sec:admi}

We begin this section by describing the subsets involved in  the Mergelyan approximation  theorem in Sec.\ \ref{sec:Mer}. Although there is room for generalizations, the  sets considered in Def.\ \ref{def:admi}  are sufficient for our geometric applications.

\begin{remark}\label{rem:inicio}
From now on in the paper, $\sub{\Ncal},$ $\Ncal,$ $\pi,$ and $\I,$ will be as in Def.\ \ref{def:non}. Moreover, $\sigma_\Ncal^2$ will denote a conformal Riemannian metric on $\Ncal$ such that $\I^*(\sigma_\Ncal^2)=\sigma_\Ncal^2.$
\end{remark}

First of all, recall that a subset $A\subset\Ncal$ is said to be {\em Runge} (in $\Ncal$)  if $\Ncal\setminus A$ has no relatively compact connected components. 

A compact Jordan arc in $\Ncal$ is said to be analytical (smooth, continuous, etc.) if it is contained in an open analytical (smooth, continuous, etc.) Jordan arc in $\Ncal.$

\begin{definition}\label{def:admi}
A (possibly non-connected) $\I$-invariant compact subset $S\subset \Ncal$ is said to be $\I$-admissible in $\Ncal$ if and only if (see Fig.\ \ref{fig:admi}):
\begin{enumerate}[\rm (a)]
\item $S$ is Runge,
\item $R_S:=\overline{S^\circ}$ is non-empty and consists of a finite collection of pairwise disjoint compact regions in $\Ncal$ with   $\Ccal^0$ boundary,
\item $C_S:=\overline{S\setminus R_S}$ consists of a finite collection of pairwise disjoint analytical Jordan arcs, and
\item any component $\alpha$ of $C_S$  with an endpoint  $P\in R_S$ admits an analytical extension $\beta$ in $\Ncal$ such that the unique component of $\beta\setminus\alpha$ with endpoint $P$ lies in $R_S.$
\end{enumerate}
\end{definition}
\begin{figure}[ht]
    \begin{center}
    \scalebox{0.25}{\includegraphics{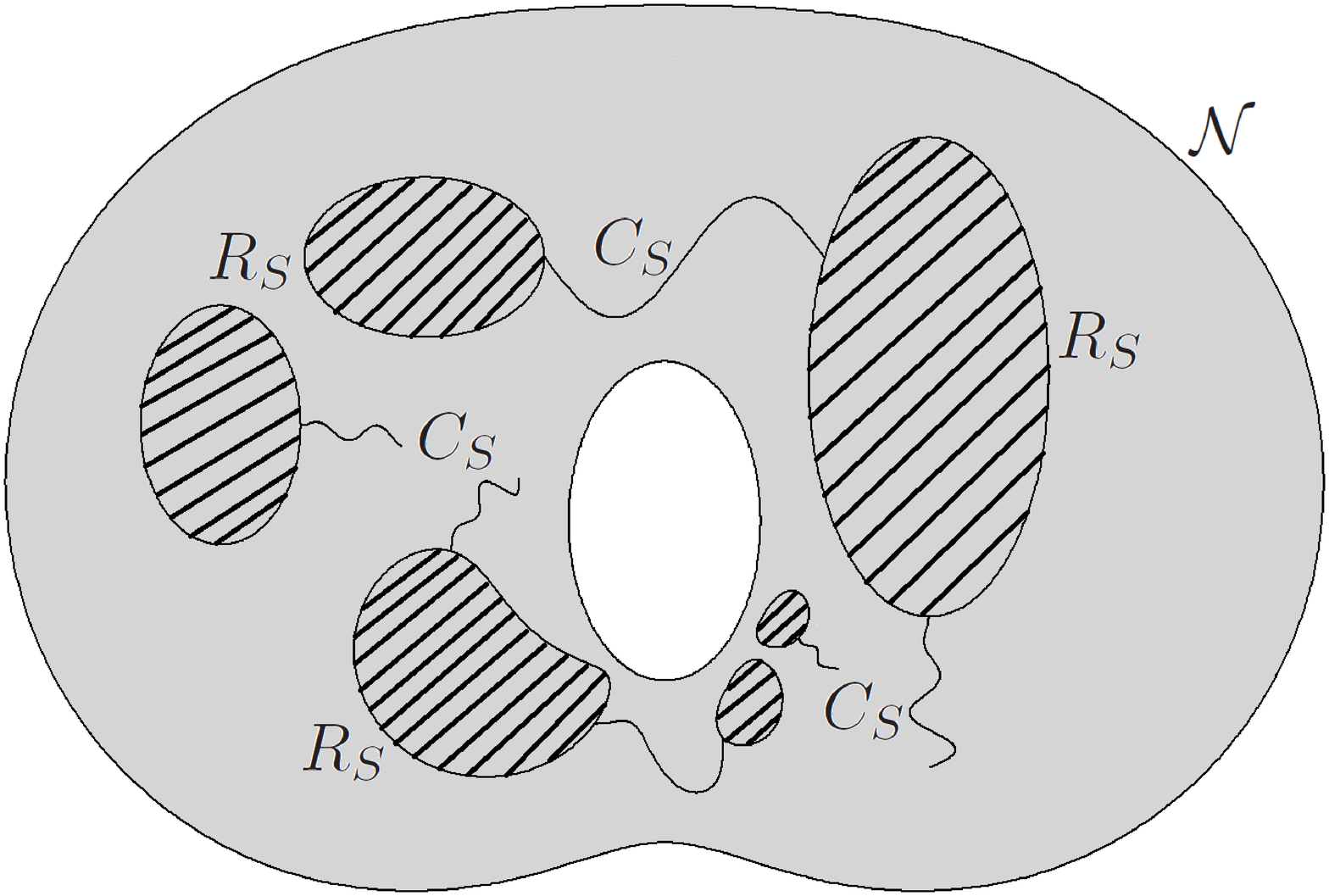}}
        \end{center}
\caption{An $\I$-admissible set $S\subset\Ncal.$}\label{fig:admi}
\end{figure}

An $\I$-invariant compact subset $S\subset \Ncal$ satisfying (b), (c), and (d),  is Runge (hence $\I$-admissible) if and only if $i_*\colon \Hcal_1(S,\z) \to \Hcal_1(\Ncal,\z)$ is a monomorphism, where $\Hcal_1(\cdot,\z)$ means first homology group, $i\colon S \to \Ncal$  is the inclusion map,  and $i_*$ is the induced group morphism. If $S\subset \Ncal$ is an $\I$-invariant compact Runge subset consisting of a finite collection of pairwise disjoint compact regions with $\Ccal^0$ boundary, then $S$  is $\I$-admissible; that is to say, we allow $C_S$ to be empty. The most typical $\I$-admissible subsets $S$ in $\Ncal$ consist of a finite collection of pairwise disjoint compact regions $R_S$ with $\Ccal^1$ boundary, and a finite collection of Jordan analytical arcs $C_S$ meeting $b R_S$ transversally.


\subsection{Functions on $\I$-admissible subsets}

From now on in this section, $S$ will denote an $\I$-admissible subset in $\Ncal,$ in the sense of Def.\ \ref{def:admi}.  

\begin{definition} We denote by
\begin{itemize}
\item $\Fcal_{\hgot,\I}(S)$ the real vectorial space of continuous functions $f\colon S \to \c$, holomorphic on an open neighborhood of $R_S$ in $\Ncal,$ such that $f\circ\I|_S=\overline{f},$ and

\item $\Fcal_{\mgot,\I}(S)$ the real vectorial space of continuous functions $f\colon S \to \overline{\c}$, meromorphic on an open neighborhood of $R_S$ in $\Ncal,$ satisfying that $f\circ\I|_S=\overline{f}$ and  $f^{-1}(\infty)\subset S^\circ=R_S\setminus b R_S.$
\end{itemize}

Likewise, we denote by
\begin{itemize}
\item $\Gcal_\I(S)$ the family of continuous functions $g\colon S \to \overline{\c}$, meromorphic on an open neighborhood of $R_S$ in $\Ncal,$ satisfying that $g\circ\I|_S=-1/\overline{g}$ and  $g^{-1}(\{0,\infty\})\subset S^\circ=R_S\setminus b R_S.$
\end{itemize}
\end{definition}

A $1$-form $\theta$ on $S$ is said to be of type $(1,0)$ if for any conformal chart $(U,z)$ in $\Ncal,$ $\theta|_{U \cap S}=h(z) dz$ for some function $h\colon U \cap S \to \overline{\c}.$
Finite sequences $\Theta=(\theta_1,\ldots,\theta_n),$ where $\theta_j$ is a $(1,0)$-type $1$-form for all $j\in\{1,\ldots,n\},$ are said to be $n$-dimensional vectorial $(1,0)$-forms on $S.$   The space of continuous $n$-dimensional $(1,0)$-forms on $S$ will be endowed with the $\Ccal^0$ topology induced by the norm  
\begin{equation} \label{eq:norma0}
\|\Theta\|_{0,S}:=\|\frac{\Theta}{\sigma_\Ncal}\|_{0,S}=\max_{S} \big\{ \big(\sum_{j=1}^n |\frac{\theta_j}{\sigma_\Ncal}|^2\big)^{1/2}\big\};
\end{equation}
see Remark \ref{rem:inicio}.

\begin{definition} \label{def:divs}
For any $f \in \Fcal_{\mgot,\I}(S)$ we write  $(f)_0$, $(f)_\infty$, and  $(f)$ for the zero divisor  $(f|_{R_S})_0$, the polar divisor $(f|_{R_S})_\infty$, and the divisor $(f|_{R_S})$, respectively; see \cite{FarkasKra}.
\end{definition}
Notice that all these divisors lie in $\div_\I(R_S)$. Obviously, ${\rm supp}((f)_\infty)=f^{-1}(\infty)\subset R_S$ and ${\rm supp}((f)_0)=f^{-1}(0)\cap R_S.$ Likewise we define the corresponding  divisors for functions $g\in\Gcal_\I(S)$, but in this case they do not lie in $\div_\I(R_S)$ unless $(g)=1$. In fact, $\I((g)_0)=(g)_\infty$.

The following Gunning-Narasimhan's type result for relatively compact  $\I$-invariant domains is required for later purposes. A general theorem in this line for non-orientable Riemann surfaces will be shown later in Sec.\ \ref{sec:apli}; see Theorem \ref{th:GN}.

\begin{proposition}\label{pro:Icero}
Let $W$ be a relatively compact  $\I$-invariant open subset in $\Ncal.$ Then there exists a nowhere-vanishing holomorphic $1$-form $\tau$ on $W$ such that $\I^*(\tau)=\overline{\tau}.$
\end{proposition}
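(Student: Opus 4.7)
I propose to combine the classical Gunning--Narasimhan theorem on the open Riemann surface $\Ncal$ with the Behnke--Stein theorem (every divisor on an open Riemann surface is principal) and a direct multiplicative symmetrization, so as to build an $\I$-invariant nowhere-vanishing holomorphic $1$-form on $W$.

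First, by the classical Gunning--Narasimhan theorem applied to $\Ncal$, there exists a nowhere-vanishing holomorphic $1$-form $\omega$ on $\Ncal$. Its symmetrization $\tau_0 := \omega + \overline{\I^*\omega}$ satisfies $\I^*\tau_0 = \I^*\omega + \overline{\omega} = \overline{\tau_0}$, hence $\tau_0 \in \Omega_{\hgot,\I}(\Ncal)$ (replacing $\omega$ by a nonzero complex multiple if necessary so that $\tau_0 \not\equiv 0$). The form $\tau_0$ may vanish along an $\I$-invariant discrete subset of $\Ncal$; since $\overline{W}$ is compact, the zero divisor $D := (\tau_0)|_W$ is a finite integral $\I$-invariant divisor, which can be written $D = \prod_i (p_i\cdot \I(p_i))^{n_i}$ with $n_i \in \n$ and the pairs $\{p_i,\I(p_i)\}$ pairwise disjoint (recall $\I$ has no fixed points).

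Next, for each $i$ I would apply the Behnke--Stein theorem to $W$ (an open, possibly disconnected, Riemann surface in its own right) to produce a holomorphic function $g_i$ on $W$ with divisor $p_i$ on $W$: i.e.\ a simple zero at $p_i$ and nowhere else. Set $h_i := g_i \cdot \overline{g_i \circ \I}$; this is a holomorphic function on $W$ with divisor $p_i \cdot \I(p_i)$, and the short computation
\[
\overline{h_i\circ\I}(p) = \overline{g_i(\I(p))\cdot\overline{g_i(p)}} = \overline{g_i(\I(p))}\cdot g_i(p) = h_i(p)
\]
gives $\overline{h_i\circ\I} = h_i$, so $h_i \in \Fcal_{\hgot,\I}(W)$. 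Then $f := \prod_i h_i^{-n_i} \in \Fcal_{\mgot,\I}(W)$ has divisor $D^{-1}$ on $W$, and $\tau := f\,\tau_0$ is the sought form: it is holomorphic on $W$ because the polar divisor of $f$ exactly cancels the zero divisor of $\tau_0$ there, it is nowhere-vanishing on $W$, and it satisfies $\I^*(f\tau_0) = \overline{f}\cdot\overline{\tau_0} = \overline{f\tau_0}$, so $\tau \in \Omega_{\hgot,\I}(W)$.

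The main technical inputs are thus two classical theorems on open Riemann surfaces (Gunning--Narasimhan and Behnke--Stein), applied to $\Ncal$ and $W$ respectively. The non-orientable structure is handled entirely by the symmetrization $g_i \mapsto g_i \cdot \overline{g_i\circ\I}$ at the level of the divisor-cancelling function $f$. The step most worth verifying carefully is that each $h_i$ (and hence the product $f$) really lies in $\Fcal_{\hgot,\I}(W)$ (respectively $\Fcal_{\mgot,\I}(W)$), which is the short computation displayed above; everything else is formal. The relative compactness of $W$ is used only to guarantee that $D$ is a finite divisor, so that the product defining $f$ is finite.
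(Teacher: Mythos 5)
Your proof is correct and follows essentially the same outline as the paper's: symmetrize a Gunning--Narasimhan form via $\omega\mapsto\omega+\overline{\I^*\omega}$ (adjusting by $\imath$ if this vanishes identically) to land in $\Omega_{\hgot,\I}$, then cancel its finitely many zeros on $W$ by dividing by an $\I$-invariant holomorphic function built with the $g\mapsto g\cdot\overline{g\circ\I}$ symmetrization. The one genuine variation is in how the divisor-cancelling function is produced: the paper decomposes the zero divisor as $D=D_1\I(D_1)$ and invokes Riemann--Roch on $\Ncal$ (using that $\Ncal\setminus\overline W\neq\emptyset$ leaves room for poles) to get a single meromorphic $h$ on $\Ncal$ holomorphic on $\overline W$ with $(h|_{\overline W})_0=D_1$, whereas you work intrinsically on $W$ via the Weierstrass/Behnke--Stein theorem, producing one $g_i$ per symmetric pair of zeros; both routes work, and yours is marginally more self-contained (and sidesteps the paper's reduction to $W$ connected, since everything you do is global on $W$).
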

\begin{proof}
Since the same argument applies
separately to each connected component, we may assume that $W$ is a domain.

Take a nowhere-vanishing holomorphic $1$-form $\tau_0$ on $\Ncal$ (see \cite{GunningNarasimhan}). If $\tau_0+\overline{\I^*(\tau_0)}$ vanishes everywhere on $W$, then  it suffices to set  $\tau:=\imath \tau_0|_W.$ Otherwise, by the Identity Principle $\tau_1:=(\tau_0+\overline{\I^*(\tau_0)})|_W$ has finitely many zeros on the compact set $\overline{W};$ hence on $W.$ Denote by $D$ the divisor associated to $\tau_1|_{\overline{W}}$. Since $\tau_1\in \Omega_{\hgot,\I}(W)$ we can write $D=D_1 \I(D_1)$, where ${\rm supp}(D_1) \cap {\rm supp}(\I(D_1))=\emptyset.$ Since $\Ncal\setminus\overline{W}$ is a non-empty open set, then the  Riemann-Roch Theorem furnishes a meromorphic function $h$ on $\Ncal$ such that $h|_{\overline{W}}$ is holomorphic and $(h|_{\overline{W}})_0=D_1.$ Set $H:=h \cdot \overline{h\circ \I}\in \Fcal_{\hgot,\I}(\Ncal)$ and observe that $(H|_{\overline{W}})_0=D.$ We finish by setting $\tau:=\tau_1/(H|_W).$
\end{proof}

From now on in this section, let $W$  and $\tau$ be as in Proposition \ref{pro:Icero} such that $S\subset W$. The following notions do not depend on the chosen $W$ and $\tau.$
 
\begin{definition} We denote by
\begin{itemize}
\item $\Omega_{\hgot,\I}(S)$ the real vectorial space of $1$-forms $\theta$ of type $(1,0)$ on $S$ such that $\theta/\tau \in \Fcal_{\hgot,\I}(S),$ and 
\item $\Omega_{\mgot,\I}(S)$ the real vectorial space of $1$-forms $\theta$ of type $(1,0)$ on $S$ such that $\theta/\tau\in \Fcal_{\mgot,\I}(S).$  
\end{itemize}
\end{definition}

Define as above the associated divisors $(\theta)_0$ and $(\theta)_\infty$ of zeros and poles, respectively,  for any $\theta \in \Omega_{\mgot,\I}(S)$. Likewise, denote by $(\theta)=\frac{(\theta)_0}{(\theta)_\infty}$ the divisor of $\theta$, and notice that all these divisors lie in $\div_{\I}(S).$

\begin{definition}\label{def:omega-t} 
We shall say that
\begin{itemize}
\item  a function $f \in \Fcal_{\hgot,\I}(S)$ can be approximated in the $\Ccal^0$ topology on $S$ by functions in $\Fcal_{\hgot,\I}(W)$ if
there exists $\{f_n\}_{n \in \n} \subset \Fcal_{\hgot,\I}(W)$ such that $\{\|f_n|_S-f\|_{0,S}\}_{n \in \n} \to 0,$

\item a function $f \in \Fcal_{\mgot,\I}(S)$ can be approximated in the $\Ccal^0$ topology on $S$ by functions in $\Fcal_{\mgot,\I}(W)$ if
there exists $\{f_n\}_{n \in \n} \subset  \Fcal_{\mgot,\I}(W)$ such that $f_n|_S-f \in {\mathcal{ F}_\hgot}(S)$ for all $n$ and $\{\|f_n|_S-f\|_{0,S}\}_{n \in \n} \to 0$ (in particular, $(f_n)_\infty=(f)_\infty$ on $S^\circ$ for all $n$),

\item  a $1$-form $\theta \in \Omega_{\hgot,\I}(S)$ can be approximated in the $\Ccal^0$ topology on $S$ by $1$-forms in $\Omega_{\hgot,\I}(W)$ if
there exists  $\{\theta_n\}_{n \in \n} \subset \Omega_{\hgot,\I}(W)$ such that $\{\|\theta_n|_S-\theta\|_{0,S}\}_{n \in \n} \to 0,$

\item a $1$-form $\theta \in \Omega_{\mgot,\I}(S)$ can be approximated in the $\Ccal^0$ topology on $S$ by $1$-forms in $\Omega_{\mgot,\I}(W)$ if
there exists  $\{\theta_n\}_{n \in \n} \subset \Omega_{\mgot,\I}(W)$ such that $\theta_n|_S-\theta \in \Omega_{\hgot,\I}(S)$ for all $n$ and $\{\|\theta_n|_S-\theta\|_{0,S}\}_{n \in \n} \to 0$  (in particular $(\theta_n)_\infty=(\theta)_\infty$  on $S^\circ$ for all $n$), and

\item  a function $g\in\Gcal_\I(S)$ can be approximated in the $\Ccal^0$ topology on $S$ by functions in $\Gcal_\I(W)$ if there exists $\{g_n\}_{n \in \n} \subset \Gcal_\I(W)$ such that $g_n-g$ is holomorphic on  (a neighborhood of) $R_S$ and  $\{\|g_n|_S-g\|_{0,S}\}_{n \in \n} \to 0.$    
\end{itemize}
\end{definition}

We define the notions of approximation in the $\Ccal^0$ topology of vectorial functions and $1$-forms in a similar way. 

\begin{definition} \label{def:f-smooth}
A function $f\colon S\to\k^n$ ($\k=\r,$ $\c,$ or $\overline{\c},$ $n\in\n$) is said to be {\em smooth}
if  $f|_{R_S}$ admits a smooth extension $f_0$ to an open domain $V$ in $\Ncal$ containing $R_S,$ and for any component $\alpha$ of $C_S$ and any open analytical Jordan arc $\beta$ in $\Ncal$ containing $\alpha,$  $f|_\alpha$ admits a smooth extension $f_\beta$ to $\beta$ satisfying that $f_\beta|_{V \cap \beta}=f_0|_{V \cap \beta}.$
\end{definition}

\begin{definition} \label{def:t-smooth}
A vectorial $1$-form $\Theta$ of type $(1,0)$ on $S$  is said to be {\em smooth} if $\Theta/\tau\colon S \to \overline{\c}^n$ is smooth in the sense of Def.\ \ref{def:f-smooth}.
\end{definition}

\begin{definition}
Given a smooth function $f\in\Fcal_{\mgot,\I}(S)\cup \Gcal_\I(S),$ we denote by $df$ the $1$-form of type $(1,0)$  given by 
\[
df|_{R_S}=\partial (f|_{R_S})\enskip \text{and}\enskip df|_{\alpha \cap U}=(f \circ \alpha)'(x)dz|_{\alpha \cap U}
\]
for any component $\alpha$ of $C_S,$ where $(U,z=x+\imath y)$ is any conformal chart on $\Ncal$ satisfying that $z(\alpha \cap U)\subset \r\equiv \{y=0\}$ (the existence of such a conformal chart is guaranteed by the analyticity of $\alpha$).
Notice that $df$ is well defined and smooth. Furthermore, $df|_\alpha(t)= (f\circ\alpha)'(t) dt$ for any component $\alpha$ of $C_S,$ where $t$ is any smooth parameter along $\alpha.$ 
\end{definition}

If  $f\in\Fcal_{\mgot,\I}(S)$ is a smooth function, then $df$ belongs to $\Omega_{\mgot,\I}(S)$ (to $\Omega_{\hgot,\I}(S)$ if $f\in\Fcal_{\hgot,\I}(S)$). 


A smooth $1$-form $\theta \in \Omega_{\mgot,\I}(S)$  is said to be {\em exact} if $\theta=df$ for some smooth $f \in \Fcal_{\mgot,\I}(S),$ or equivalently if $\int_\gamma \theta=0$ for all $\gamma \in \Hcal_1(S,\z).$ The exactness of vectorial $1$-forms in $\Omega_{\mgot,\I}(S)^n$, $n\in\n$, is defined in the same way.


\subsection{Conformal minimal immersions on $\I$-admissible subsets}\label{sec:minimal-I}

Let us begin this subsection by generalizing the notion of conformal minimal immersion to maps defined on $\I$-admissible sets; see Def.\ \ref{def:admi}, and also Def.\ \ref{def:saturated} for notation.

\begin{definition}
A map $\sub{X}\colon \sub{S} \to \r^3$ is said to be a {\em generalized non-orientable minimal immersion} if the $\I$-invariant map 
\[
X:=\sub{X}\circ\pi|_S\colon S\to\r^3\quad \text{(see Def.\ \ref{def:invariant})}
\]
is smooth (see Def.\ \ref{def:f-smooth}), and satisfies that 
\begin{itemize}
\item $X|_{R_S} \in \Mcal_\I(R_S)$ (see Def.\ \ref{def:M(A)}) and
\item $X|_{C_S}$ is regular; that is to say, $X|_\alpha$ is a regular curve for all $\alpha \subset C_S.$
\end{itemize}
In this case, we also say that $X$ is an {\em $\I$-invariant generalized minimal immersion}, and write $X\in\Mcal_{\ggot,\I}(S).$
\end{definition}

Notice that $X|_{S}\in \Mcal_{\ggot,\I}(S)$  for all $X \in \Mcal_\I(S).$ 

Let $X\in \Mcal_{\ggot,\I}(S)$  and let $\varpi\colon C_S \to \r^3$ be a {\em smooth normal field} along $C_S$ with respect to $X$; this means that for any (analytical) arc-length parametrized $\alpha (s) \subset C_S,$   $\varpi(\alpha(s))$ is smooth, unitary, and orthogonal to $(X\circ \alpha)'(s),$    $\varpi$ extends smoothly to any open analytical arc $\beta$ in $\Ncal$ containing $\alpha$, and $\varpi$ is tangent to $X$ on $\beta \cap S$.  

Let ${\mathfrak{n}}\colon R_S\to\s^2$ denote the Gauss map of the (oriented) conformal minimal immersion $X|_{R_S}$.  The normal field $\varpi$ is said to be {\em orientable} with respect to $X$ if  for any  regular embedded curve $\alpha \subset S$ and arc-length parametrization $X\circ \alpha(s)$ of $X\circ \alpha$, there exists a constant $\delta\in\{-1,1\}$ (depending on the parametrization) such that  
$$\text{$(X\circ\alpha)'({s_0})\times  \varpi(\alpha({s_0}))=\delta {\mathfrak{n}}(\alpha({s_0}))\quad$ for all ${s_0}\in \alpha^{-1}(C_S\cap R_S)$.}$$ If $\varpi$ is orientable, $\alpha$ is a connected component of $C_S$, and $\delta=1$,  then $s$ is said to be a {\em positive} arc-length parameter of $X\circ \alpha$ with respect to $\varpi$. Positive arc-length parameters  with respect to $\varpi$ on regular curves in $C_S$ are unique up to translations.

If $\varpi$ is orientable with respect to $X$, we denote by ${\mathfrak{n}}_\varpi\colon S\to \s^2\subset \r^3$ the smooth map given by ${\mathfrak{n}}_\varpi|_{R_S}={\mathfrak{n}}$ and $({\mathfrak{n}}_\varpi \circ \alpha)(s):=(X\circ \alpha)'(s)\times \varpi(\alpha(s))$, where $\alpha$ is any component of
$C_S$ and $s$ is the positive arc-length parameter of $X\circ\alpha$  with respect to $\varpi$.  By definition,  ${\mathfrak{n}}_\varpi$ is said to be the {\em (generalized) Gauss map} of $X$ associated to the orientable smooth normal field $\varpi$. 
Obviously, if $\varpi$ is orientable then $-\varpi$ is orientable as well and   ${\mathfrak{n}}_\varpi= {\mathfrak{n}}_{-\varpi}$.

\begin{definition}\label{def:marked}
We denote by  $\Mcal_{\ggot,\I}^*(S)$ the space of marked immersions $X_\varpi:=(X,\varpi),$ where $X \in \Mcal_{\ggot,\I}(S)$ and $\varpi$ is an  orientable smooth normal field along $C_S$ with respect to $X$ such that 
$\varpi\circ \I=-\varpi$, or equivalently,
\begin{equation}\label{eq:con-non}
{\mathfrak{n}}_\varpi \circ \I=-{\mathfrak{n}}_\varpi.
\end{equation}
\end{definition}

\begin{remark}
Marked minimal immersions play the role of $\I$-invariant conformal minimal immersions of  $\I$-admissible subsets into $\r^3.$ They will be the natural initial conditions for the Mergelyan approximation theorem  in Sec.\ \ref{sec:Mer}.
\end{remark}

Let $X_\varpi \in \Mcal_{\ggot,\I}^*(S),$ and let $\partial
X_\varpi=(\hat{\phi}_j)_{j=1,2,3}$ be the complex vectorial
$1$-form  on $S$ given by  
\[
\partial X_\varpi|_{R_S}=\partial (X|_{R_S}),\quad \partial X_\varpi(\alpha'(s))= dX
(\alpha'(s)) + \imath \varpi(s);
\]
where $\alpha$ is a component of
$C_S$ and $s$ is the positive arc-length parameter of $X\circ\alpha$ with respect to $\varpi$. If $(U,z=x+\imath y)$ is a
conformal chart on $\Ncal$ such that $\alpha \cap U=z^{-1}(\r \cap
z(U)),$ it is clear that $(\partial X_\varpi)|_{\alpha \cap
U}=\big[dX (\alpha'(s)) + \imath \varpi(s)\big]s'(x)dz|_{\alpha \cap
U},$ hence
 $\partial X_\varpi \in \Omega_{\hgot,\I}(S)^3.$
 
Furthermore, the function
\[
\hat{g}\colon S \to \overline{\c},\quad \hat{g}=\frac{\hat{\phi}_3}{\hat{\phi}_1-\imath\hat{\phi}_2},
\]
is continuous on $S,$ meromorphic on an open neighborhood of $R_S$ in $\Ncal,$ and formally satisfies \eqref{eq:g-non}; hence $\hat{g} \in \Gcal_\I(S)$ provided that $\hat{g}^{-1}(\{0,\infty\})\subset R_S.$ Further,  $\hat{g}$ is nothing but the stereographic projection of the Gauss map ${\mathfrak{n}}_\varpi$ of $X_\varpi$.

Obviously, $\hat{\phi}_j$ is smooth on $S,$ $j=1,2,3,$ and the
same occurs for $\hat{g}$ provided that $\hat{g}^{-1}(\{0,\infty\})\subset R_S$
 (see Def.\ \ref{def:f-smooth} and \ref{def:t-smooth}). In addition, $\partial X_\varpi$ formally satisfies \eqref{eq:conformal}, \eqref{eq:Wdata-non}, \eqref{eq:immersion}, and  $\Re (\hat{\phi}_j)$ is an exact real $1$-form on $S,$
$j=1,2,3;$ hence we also have that $X(P)=X(Q)+\Re \int_{Q}^P
(\hat{\phi}_j)_{j=1,2,3},$ $P,$ $Q \in S.$
 For these reasons, $(\hat{g},\hat{\phi}_3)$ will be said the {\em generalized Weierstrass data} of $X_\varpi.$ Since $\partial X_\varpi$ and  $\hat{g}$  formally satisfy \eqref{eq:Wdata-non} and \eqref{eq:g-non}, then one can introduce the {\em generalized complex Gauss map} $\hat{G}\colon \sub{S}\to\r\p^2$ of $\sub{X}$ associated to $\varpi;$ see Def.\ \ref{def:gauss-non}. 

Notice that $X|_{R_S} \in \Mcal_\I (R_S),$ hence $(\phi_j)_{j=1,2,3}:=(\hat{\phi}_j|_{R_S})_{j=1,2,3}$ and $g:=\hat{g}|_{R_S}$ are obviously the  Weierstrass data and the complex Gauss map of $X|_{R_S},$ respectively.

The space $\Mcal_{\ggot,\I}^*(S)$ is naturally endowed with the following $\Ccal^1$ topology:

\begin{definition}\label{def:C1}
Given $X_{\varpi},$ $Y_{\xi}\in\Mcal_{\ggot,\I}^*(S),$ we set
\[
\|X_{\varpi}-Y_{\xi}\|_{1,S}:=\|X-Y\|_{0,S}+\big\|\partial X_{\varpi}-\partial Y_{\xi}\big\|_{0,S} \quad(\text{see \eqref{eq:norma0}}).
\]

Given $F \in \Mcal_\I(S)$, we denote by $\varpi_F$ the conormal field of $F$ along  $C_S.$ Notice that $\varpi_F$ satisfies \eqref{eq:con-non} and $\partial F|_S=\partial F_{\varpi_F},$ where $F_{\varpi_F}:=(F|_S,\varpi_F) \in \Mcal_{\ggot,\I}^*(S).$ 

Given $F,$ $G\in \Mcal_\I(S),$ we set 
\[
\|F-X_{\varpi}\|_{1,S}:=\|F_{\varpi_F}-X_{\varpi}\|_{1,S}\quad \text{and} \quad \|F-G\|_{1,S}:=\|F_{\varpi_F}-G_{\varpi_G}\|_{1,S}.
\]
\end{definition}

\begin{definition} \label{def:C1-top}
We shall say that $X_\varpi \in \Mcal_{\ggot,\I}^*(S)$  can be  approximated in the $\Ccal^1$ topology on $S$ by $\I$-invariant conformal minimal immersions in $\Mcal_\I(W)$ if for any $\epsilon>0$ there exists  $Y\in  \Mcal_\I(W)$ such that $\|Y-X_\varpi\|_{1,S}<\epsilon.$
\end{definition} 

If $X_\varpi\in\Mcal_{\ggot,\I}^*(S),$ then the group homomorphism  
\[
\pgot_{X_\varpi}\colon \Hcal_1(S,\z) \to \r^3, \quad \pgot_{X_\varpi}(\gamma)=\Im \int_\gamma \partial X_\varpi,
\]
is said to be the {\em generalized flux map} of $X_\varpi.$ Notice that $\pgot_{X_\varpi}$  satisfies \eqref{eq:flux-non}. Obviously,  $\pgot_{X_{\varpi_Y}}=\pgot_Y|_{\Hcal_1(S,\z)}$ provided that  $X=Y|_S$ for some $Y\in\Mcal_\I(S).$


\section{Approximation results}\label{sec:approx}

Throughout this section, $S\subset \Ncal$ will denote an $\I$-admissible subset, and  $W$ a relatively compact  $\I$-invariant open subset of $\Ncal$ containing $S.$ 

In this section we state and prove several preliminary approximation results that will be key in the proof of the main theorem, in Sec.\ \ref{sec:Mer}. In particular, Lemmas \ref{lem:funaprox} and \ref{lem:gaprox} deal with functions in $\Fcal_{\mgot,\I}(S)$ and $\Gcal_\I(S)$, respectively. We begin with the following

\begin{lemma} \label{lem:runge} For any  $f \in \Fcal_{\mgot,\I}(S)$ and integral divisor $D \in \div_\I(S)$ with  ${\rm supp}(D)\subset S^\circ,$ there exists $\{f_n\}_{n \in \n} \subset \Fcal_{\mgot,\I}(W)$ such that $f_n|_S-f \in \Fcal_{\hgot,\I}(S)$ and $\big(f_n|_S-f\big) \geq D$ for all $n\in\n,$ and $\{\|f_n|_S-f\|_{0,S}\}_{n \in \n} \to 0.$
\end{lemma}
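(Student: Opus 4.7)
The plan is to reduce the lemma to the classical meromorphic Runge--Mergelyan theorem on the open Riemann surface $\Ncal$, and then enforce $\I$-invariance by a symmetrization (averaging) trick.

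\textbf{Step 1 (non-symmetric Mergelyan).} First I would invoke a standard enhancement of Bishop's theorem, obtainable by combining the Runge--Mergelyan statement in the introduction with Mittag--Leffler on $\Ncal$ (to prescribe the principal parts of $f$ at $(f)_\infty$) and with multiplication by an auxiliary holomorphic function $\Omega$ on $\Ncal$ with $(\Omega)=D$, which exists since $\Ncal$ is an open Riemann surface and $D$ is an integral divisor supported in $S^\circ$. This produces a sequence $\{g_n\}_{n\in\n}$ of meromorphic functions on $\Ncal$ (and hence on $W$) such that $g_n$ has the same principal parts as $f$ at every pole of $f$ in $S^\circ$, the difference $g_n|_S-f$ is holomorphic on an open neighborhood of $R_S$, $\bigl(g_n|_S - f\bigr) \geq D$, and $\|g_n|_S - f\|_{0,S}\to 0$. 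At this stage, $g_n$ need not satisfy $g_n\circ \I=\overline{g_n}$.

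\textbf{Step 2 ($\I$-symmetrization).} Set
\[
f_n(p) := \tfrac{1}{2}\bigl(g_n(p) + \overline{g_n(\I(p))}\bigr),\qquad p\in W.
\]
Since $\I$ is antiholomorphic, the map $p\mapsto \overline{g_n(\I(p))}$ is meromorphic on $W$, so $f_n$ is meromorphic on $W$. A direct substitution $p\to \I(p)$ gives $f_n\circ \I=\overline{f_n}$, and hence $f_n\in\Fcal_{\mgot,\I}(W)$.

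\textbf{Step 3 (verifications).} Using $\overline{f\circ \I}=f$ on $S$, we rewrite
\[
f_n - f = \tfrac{1}{2}\bigl[(g_n-f)\;+\;\overline{(g_n-f)\circ \I}\bigr].
\]
\emph{Holomorphicity across $(f)_\infty$:} each summand is holomorphic on a neighborhood of $R_S$, the first by Step 1 and the second because $\I(R_S)=R_S$ and $\I$ is antiholomorphic; hence $f_n|_S - f\in\Fcal_{\hgot,\I}(S)$ and, in particular, $(f_n)_\infty=(f)_\infty$ on $S^\circ$. \emph{Order along $D$:} a local coordinate computation (choosing compatible charts $z$ at $P$ and $w$ at $\I(P)$ in which $\I$ becomes $w=\overline{\psi(z)}$ for a holomorphic $\psi$ with $\psi'(0)\neq 0$) shows that $\overline{h\circ \I}$ vanishes at $P$ to the same order as $h$ vanishes at $\I(P)$; applying this with $h=g_n-f$ and using $D(\I(P))=D(P)$ yields $\bigl(f_n|_S - f\bigr)\geq D$. \emph{Uniform convergence:} $\I(S)=S$ together with the definition of the norm forces $\bigl\|\overline{(g_n-f)\circ \I}\bigr\|_{0,S}=\|g_n-f\|_{0,S}$, so $\|f_n|_S - f\|_{0,S}\to 0$.

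\textbf{Main obstacle.} The one delicate point is the preservation of the divisor condition under the symmetrization. The prescribed vanishing of $g_n-f$ at a node $P\in\mathrm{supp}(D)$ is transferred to a vanishing of the conjugate-composed summand $\overline{(g_n-f)\circ \I}$ at $P$ only because $D$ is itself $\I$-invariant, so that $g_n-f$ already vanishes to the required order at $\I(P)$; the antiholomorphic change of variable then reproduces the order at $P$. Once this interplay between $\I$-symmetry and holomorphic vanishing orders is pinned down, all remaining verifications are routine.
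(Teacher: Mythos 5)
Your proposal is correct and is essentially the paper's own argument: obtain a non-symmetric approximating sequence $\{g_n\}$ with the divisor condition (the paper cites \cite[Theorem 4.1]{AL-proper}, you sketch a derivation via Mittag--Leffler plus Runge--Mergelyan plus an auxiliary function with divisor $D$), then symmetrize via $f_n=\tfrac12(g_n+\overline{g_n\circ\I})$. Your Step~3 verification that the divisor lower bound and uniform estimate survive the averaging, precisely because $D$ and $S$ are $\I$-invariant, is the same observation the paper uses.
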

\begin{proof}
By classical approximation results (see \cite[Theorem 4.1]{AL-proper} for details), there exists a sequence of meromorphic functions $\{h_n\colon W\to\overline\c\}_{n\in\n}$ such that $h_n|_S-f\colon S\to\c$ is continuous on $S$ and holomorphic in a neighborhood of $R_S,$ $(h_n|_S-f)\geq D,$ $\forall n\in\n$ (see Def.\ \ref{def:divs}), and $\{\|h_n|_S-f\|_{0,S}\}_{n\in\n}\to 0.$ Since $f\circ \I=\bar f$ and $D$ is $\I$-invariant, then the sequence $\{\overline{h_n\circ\I}\colon W\to\c\}_{n\in\n}$ meets the same properties. Therefore, it suffices to set $f_n:=\frac12(h_n+\overline{h_n\circ\I})$ for all $n\in\n.$
\end{proof}

Let us prove the following deeper approximation result for functions in  $\Fcal_{\mgot,\I}(S)$:

\begin{lemma} \label{lem:funaprox}
Let $f \in \Fcal_{\mgot,\I}(S)$ vanishing nowhere on $S\setminus S^\circ=(b R_S) \cup C_S,$ and let $D\in \div_\I(S)$ an integral divisor with  ${\rm supp}(D)\subset S^\circ.$  

Then there exists    $\{f_n\}_{n \in \n} \subset \Fcal_{\mgot,\I}(W)$ satisfying that $f_n|_S-f \in \Fcal_{\hgot,\I}(S)$, $(f_n)=(f)$ on $W$, and $\big(f_n|_S-f\big) \geq D$ for all $n\in\n,$ and $\{\|f_n|_S-f\|_{0,S}\}_{n \in \n} \to 0.$  In particular, $f_n$ is  holomorphic and vanishes nowhere on $W\setminus S^\circ$ for all $n\in\n.$
\end{lemma}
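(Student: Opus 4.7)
The plan is to upgrade the divisor control of Lemma \ref{lem:runge} from $R_S$ to the full $W$ by multiplicatively cancelling the spurious zeros and poles. I will first produce a rough approximation of $f$ that already carries the correct divisor on $R_S$, and then construct a multiplicative correction factor that removes the remaining extraneous divisor on $W\setminus R_S$.

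First, apply Lemma \ref{lem:runge} to $f$ with the augmented integral divisor $\widehat D:=D\cdot (f)_0\cdot (f)_\infty\in \div_\I(S)$, obtaining $\{g_n\}\subset \Fcal_{\mgot,\I}(W)$ with $g_n|_S-f\in \Fcal_{\hgot,\I}(S)$, $(g_n|_S-f)\geq \widehat D$, and $\|g_n|_S-f\|_{0,S}\to 0$. Since $g_n|_S-f$ vanishes to order at least the multiplicity of each zero and pole of $f$ on $R_S$, we have $(g_n)|_{R_S}=(f)$. Moreover, $f$ is continuous and nowhere-vanishing on the compact set $bR_S\cup C_S$, so the uniform smallness of $g_n-f$ there forces $g_n$ itself to be holomorphic and nowhere-vanishing on an $\I$-invariant open neighborhood of $S\setminus S^\circ$ for $n$ large. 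Consequently, the spurious divisor $E_n:=(g_n)\cdot (f)^{-1}\in \div_\I(W)$ is $\I$-invariant and supported in a compact subset of $W\setminus R_S$ disjoint from $S$.

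Next, I would construct $v_n\in \Fcal_{\mgot,\I}(W)$ with $(v_n)=E_n^{-1}$ on $W$, $v_n-1\in \Fcal_{\hgot,\I}(S)$, $(v_n-1)\geq D\cdot (f)_\infty$, and $\|v_n-1\|_{0,S}\to 0$, and set $f_n:=g_n\,v_n\in \Fcal_{\mgot,\I}(W)$. Then
\[
(f_n)=(g_n)\cdot (v_n)=(f)\cdot E_n\cdot E_n^{-1}=(f)\quad\text{on }W,
\]
so $f_n$ is holomorphic and nowhere-vanishing on $W\setminus R_S$. The decomposition $f_n-f=(g_n-f)\,v_n+f\,(v_n-1)$, together with $(v_n-1)\geq (f)_\infty$ cancelling the poles of $f$ and Cauchy estimates controlling the size of $f\,(v_n-1)$ near those poles as $\|v_n-1\|_{0,S}\to 0$, gives $f_n-f\in \Fcal_{\hgot,\I}(S)$, $(f_n-f)\geq D$, and $\|f_n-f\|_{0,S}\to 0$.

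The construction of $v_n$ is the main technical obstacle. Write $E_n=E_n'\cdot \I^*(E_n')$ using a choice of representative in each $\I$-orbit of ${\rm supp}(E_n)$; by the Weierstrass theorem on the open Riemann surface $\Ncal$ there is a meromorphic function $h$ on $\Ncal$ with $(h)|_W=(E_n')^{-1}$, and then $u_n:=h\cdot \overline{h\circ \I}\in \Fcal_{\mgot,\I}(\Ncal)$ satisfies $(u_n)|_W=E_n^{-1}$ and is holomorphic, nowhere-vanishing on an $\I$-invariant open neighborhood of $S$ in $W$. Any $v_n:=u_n\,e^{q_n}$ with $q_n\in \Fcal_{\hgot,\I}(\Ncal)$ has the same divisor $E_n^{-1}$ on $W$, so the task reduces to approximating $-\log u_n$ on $S$ (to high order at the points of $D\cdot (f)_\infty$) by $\I$-invariant holomorphic functions on $\Ncal$; this is a direct application of Lemma \ref{lem:runge} in the holomorphic case. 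The remaining obstruction is the existence of a single-valued branch of $\log u_n$ on a neighborhood of $S$, i.e.\ the vanishing of the periods of the holomorphic $1$-form $du_n/u_n$ along cycles in $S$; this is arranged by an initial adjustment of $h$ through multiplication by a meromorphic function on $\Ncal$ holomorphic and nowhere-zero near $S$ and whose logarithmic derivative has prescribed periods along an $\I$-basis of $\Hcal_1(W,\z)$ (cf.\ Def.\ \ref{def:ibasis}), whose existence follows from the isomorphism \eqref{eq:isom}.
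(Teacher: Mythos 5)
Your strategy---first produce a rough approximant $g_n$ with the correct divisor on $R_S$, then cancel its stray divisor $E_n$ on $W\setminus R_S$ by a multiplicative factor $v_n$ close to $1$ on $S$---differs structurally from the paper's, which instead reduces (via an analogue of your $h\cdot\overline{h\circ\I}$ step) to the case $f\in\Fcal_{\hgot,\I}(S)$ nowhere vanishing, factors $f=v\,e^u$ with $v\in\Fcal_{\hgot,\I}(W)$ nowhere vanishing and $u\in\Fcal_{\hgot,\I}(S)$, applies Lemma~\ref{lem:runge} to $u$, and exponentiates. The route you propose is not unreasonable, but as written it has two gaps, one minor and one serious.

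The minor one: the deduction ``$(g_n|_S-f)\geq\widehat D:=D\,(f)_0\,(f)_\infty$, hence $(g_n)|_{R_S}=(f)$'' does not follow from the divisor inequality alone. At a zero $p$ of $f$ of order $m$, the condition $(g_n-f)\geq(f)_0$ only gives that $g_n$ vanishes to order $\geq m$ at $p$; the leading $z^m$-coefficients of $f$ and $-(g_n-f)$ could in principle cancel. You either need to augment $\widehat D$ by one extra unit at each point of ${\rm supp}((f)_0)$, or to invoke $\|g_n-f\|_{0,S}\to 0$ together with Cauchy estimates on small discs in $S^\circ$ to show that for large $n$ the leading coefficient of $g_n-f$ is strictly smaller in modulus than that of $f$. (On the pole part, $(g_n)_\infty=(f)_\infty$ on $R_S$ is already automatic from $g_n|_S-f\in\Fcal_{\hgot,\I}(S)$.)

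The serious one is in the construction of $v_n$, which is in fact where the entire difficulty of the lemma lives. You need $q_n\in\Fcal_{\hgot,\I}(\Ncal)$ approximating $-\log u_n$ on $S$, so you first need a single-valued, $\I$-compatible branch of $\log u_n$ near $S$, and you propose to arrange this by an initial adjustment of $h$ via a meromorphic multiplier ``whose logarithmic derivative has prescribed periods'', citing the isomorphism~\eqref{eq:isom}. But~\eqref{eq:isom} only produces a $1$-form $\tau\in\Omega_{\hgot,\I}(W)$ with prescribed periods over an $\I$-basis of $\Hcal_1(W,\r)$; it does not produce a function $m$ with $dm/m=\tau$. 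Exponentiating $\tau$ requires verifying that $\int_\gamma\tau\in 2\pi\imath\,\z$ for \emph{every} $\gamma\in\Hcal_1(W,\z)$ --- and this is not automatic, since the $\z$-lattice is not generated by the $c_j,d_j$ (one has $\gamma_j=\tfrac12(c_j+d_j)$) --- and it also requires fixing the constant of integration so that the resulting function lies in $\Fcal_{\hgot,\I}(W)$. This is precisely what the paper addresses in condition~\eqref{eq:res0} and Claim~\ref{cla:funciones}, where one uses that the periods over the $c_j$ of an $\I$-compatible logarithmic derivative actually lie in $4\pi\imath\,\z$ (so $\int_{\gamma_j}\tau\in 2\pi\imath\,\z$), that $\Re\int_{P_0}^{\I(P_0)}\tau=0$, and the normalization $v=e^{\int_{P_0}\tau-\frac12\int_{P_0}^{\I(P_0)}\tau}$. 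As it stands, your proposal circles back to this same $\I$-compatible exponentiation problem without solving it; once you supply the missing verifications, the correction step and the paper's factorization step become essentially the same argument in different clothing.
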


\begin{proof}
Since the space of smooth functions is dense in $\Fcal_{\mgot,\I}(S)$ under the $\Ccal^0$ topology, we can assume that $f$ is smooth. Furthermore, since the same argument applies
separately to each connected component, we may assume that $W$ is a domain.

Let us begin the proof with the following reduction:
\begin{claim}
It suffices to prove the lemma just for  nowhere-vanishing functions in $\Fcal_{\hgot,\I}(S)$.
\end{claim}
\begin{proof}
Assume that the lemma holds for nowhere-vanishing functions in $\Fcal_{\hgot,\I}(S).$ 

Take any $f\in \Fcal_{\mgot,\I}(S)$ and $D \in \div_\I(S)$ as in the statement of the lemma,  and write $(f)=D_1 \I(D_1),$ where  ${\rm supp}(D_1) \cap {\rm supp}(\I(D_1))=\emptyset.$ Since $W$ is relatively compact in the open Riemann surface $\Ncal,$ then the Riemann-Roch theorem furnishes  a meromorphic function $h_0$ on $\Ncal$ with $(h_0|_{\overline{W}})=D_1.$  The function $F:=f/(h_0( \overline{h_0\circ \I}))$ lies in $\Fcal_{\hgot,\I}(S)$ and vanishes nowhere on $S.$ By our assumption, there exists a sequence  of nowhere vanishing functions $\{F_n\}_{n\in \n}$ in $\Fcal_{\hgot,\I}(W)$ approximating $F$ on $S$ and satisfying $(F-F_n)\geq D_2$, where $D_2\in \div_\I(S)$ is any given integral  divisor with 
${\rm supp}(D_2)\subset S^\circ$ and $D_2\geq D D_1^{-1}\I(D_1)^{-1}$. 

If we choose $f_n:=F_n h_0 (\overline{h_0\circ \I})\in \Fcal_{\mgot,\I}(W)$, one has that $f_n|_S-f\in \Fcal_{\hgot,\I}(S)$, $(f_n|_S-f)\geq D$ and $(f_n)=(f)$ for all $n\in\n,$ and $\{f_n\}_{n\in \n}$ approximates $f$ on $S$.
\end{proof}

In the sequel we will assume that $f\in \Fcal_{\hgot,\I}(S)$ and has no zeros.

By the isomorphism \eqref{eq:isom}, there exists $\tau\in \Omega_{\hgot,\I}(W)$ such that 
\begin{equation}\label{eq:tau-exact}
\int_\gamma df/f=\int_\gamma \tau \enskip  \forall \gamma \in \Hcal_1(S,\z) ,
\end{equation}
and 
\begin{equation} \label{eq:res0}
\frac{1}{2 \pi \imath} \int_\gamma \tau \in \z \enskip  \forall \gamma \in \Hcal_1(W,\z).
\end{equation}
Here we have taken into account that $f$ is assumed to be smooth, $\Hcal_1(S,\z)$ is a natural subgroup of $\Hcal_1(W,\z)$ (recall that $S$ is Runge in $W$)  and $\frac{1}{2\pi \imath} \int_\gamma df/f \in \z$ for all $\gamma \in \Hcal_1(S,\z)$.

We need the following
\begin{claim}\label{cla:funciones}
There exist a nowhere vanishing  function $v\in \Fcal_{\hgot,\I}(W)$ and a function $u\in \Fcal_{\hgot,\I}(S)$ such that 
$d \log(v)=\tau$, $du=f/df-\tau|_S$,  and$f=v e^u$.
\end{claim}
\begin{proof}
To construct $v$, fix $P_0\in W$ and notice that  $\Re \int_{P_0}^{\I(P_0)}\tau=0$ independently on the arc connecting $P_0$ and $\I(P_0).$ Indeed, take any oriented Jordan arc $\gamma\subset W$  with initial point $P_0$ and final point $\I(P_0),$ and simply observe that
\[
\overline{\int_\gamma \tau}=\int_\gamma \overline{\tau}=\int_\gamma \I^*(\tau)=\int_{\I_*(\gamma)} \tau=-\int_\gamma \tau+2 k \pi \imath, \quad k \in \z .
\]
For the last equality, take into account  that $\gamma+\I_*(\gamma)\in \Hcal_1(W,\z)$ and \eqref{eq:res0}. 

Therefore, the well defined function $v:=e^{\int_{{P_0}} \tau-\frac12 \int_{{P_0}}^{{\I(P_0)}}\tau}$  lies in $\Fcal_{\hgot,\I}(W)$, and obviously satisfies $d \log(v)=\tau$.

To construct $u$, recall that $df/f-\tau|_S$ lies in $\Omega_{\hgot,\I}(S)$ and is exact; see \eqref{eq:tau-exact}. For each  connected component $C$ of $S$, fix $P_C\in C$ and set  $u|_{C}:=A_C+ \int_{P_{C}} (f/df-\tau)$, where the constant $A_C\in \c$ is chosen so that $(f-ve^{u|_C})(P_C)=0$. Since the function $\kappa:=f/(e^u v)$ is locally constant on $S$ (just observe that $d \log(\kappa)=0$) and $\kappa(P_C)=1$ for any connected component $C$ of $S$, we infer that $\kappa=1$, that is to say, $f=v e^u $ on $S$.

The facts $f,\,v|_S\in \Fcal_{\hgot,\I}(S)$ imply that $u\circ \I=\overline{u} +2 m\pi \imath$ for some $m\in \z.$ Up to replacing $u$ and $v$ for $u-m \pi \imath$ and $e^{m \pi \imath} v$, respectively,  we get that  $u\in \Fcal_{\hgot,\I}(S)$ and the functions $u$ and $v$ solve the claim.
\end{proof}

Let $u\in \Fcal_{\hgot,\I}(S)$ and $v\in \Fcal_{\hgot,\I}(W)$ like in the previous claim. By Lemma \ref{lem:runge}, there exists $\{h_n\}_{n \in \n}
\subset \Fcal_{\hgot,\I}(W)$ such that $(h_n-u)\geq D$ for all $n$, $\{\| h_n|_S- u\|_{0,S}\}_{n \in \n} \to 0.$  
To conclude, it suffices to set  $f_n:=e^{h_n}v$ for all $n.$
\end{proof}

We now derive the analogous approximation result for $1$-forms in  $\Omega_{\mgot,\I}(S)$.

\begin{lemma} \label{lem:formaprox}
Let $\theta \in \Omega_{\mgot,\I}(S)$  vanishing nowhere on $S\setminus S^\circ$,  and consider an integral divisor $D \in \div_\I(S)$ with  ${\rm supp}(D)\subset S^\circ$.

Then  there exists $\{\theta_n\}_{n \in \n} \in \Omega_{\mgot,\I}(W)$ satisfying that $\theta_n-\theta\in \Omega_{\hgot,\I}(S)$, $(\theta_n-\theta)\geq D$, and $(\theta_n)=(\theta)$ on $W$ for all $n\in\n$, and $\{\|\theta_n|_S-\theta\|_{0,S}\}_{n \in \n} \to 0.$   In particular, $\theta_n$ is  holomorphic and vanishes nowhere on $W\setminus S^\circ$ for all $n\in\n.$
\end{lemma}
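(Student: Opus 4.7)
The plan is to reduce Lemma \ref{lem:formaprox} to the already-proved function version (Lemma \ref{lem:funaprox}) by trivializing $1$-forms through division by a suitable holomorphic symmetric reference $1$-form.

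First, invoke Proposition \ref{pro:Icero} (which applies to the relatively compact $\I$-invariant domain $W$) to fix a nowhere-vanishing $\tau \in \Omega_{\hgot,\I}(W)$; equivalently, just take the $\tau$ already fixed at the end of Sec.\ \ref{sec:admi}. Since $\tau$ has no zeros and no poles on $W$, the function $f := \theta/\tau$ is well defined on $S$ and lies in $\Fcal_{\mgot,\I}(S)$ by the very definition of $\Omega_{\mgot,\I}(S)$. Moreover, because $\tau$ vanishes nowhere on $S$ and $\theta$ vanishes nowhere on $S\setminus S^\circ$, the function $f$ vanishes nowhere on $S\setminus S^\circ$. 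On $W$ one has $(f)=(\theta)$ (with the convention that the divisor of $f$ on $S$ is defined through $f|_{R_S}$), since dividing by a nowhere-zero, nowhere-pole $1$-form does not alter zeros and poles.

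Second, apply Lemma \ref{lem:funaprox} to $f$ and to the integral divisor $D \in \div_\I(S)$ with $\mathrm{supp}(D)\subset S^\circ$. This produces a sequence $\{f_n\}_{n\in\n} \subset \Fcal_{\mgot,\I}(W)$ with $f_n|_S - f \in \Fcal_{\hgot,\I}(S)$, $(f_n)=(f)$ on $W$, $(f_n|_S-f)\geq D$ for all $n$, and $\|f_n|_S-f\|_{0,S}\to 0$; in particular each $f_n$ is holomorphic and nowhere zero on $W\setminus S^\circ$.

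Third, define $\theta_n := f_n\,\tau \in \Omega_{\mgot,\I}(W)$. Then $\theta_n|_S - \theta = (f_n|_S - f)\tau \in \Omega_{\hgot,\I}(S)$, and since $\tau$ contributes neither zeros nor poles on $W$, one has $(\theta_n)=(f_n)(\tau)=(f_n)=(f)=(\theta)$ on $W$ and $(\theta_n|_S-\theta)=(f_n|_S-f)\geq D$ on $S$. Each $\theta_n$ is therefore holomorphic and nowhere vanishing on $W\setminus S^\circ$. For the norm, recall the definition $\|\cdot\|_{0,S}=\|\cdot/\sigma_\Ncal\|_{0,S}$ from \eqref{eq:norma0}: since $\tau/\sigma_\Ncal$ is continuous on the compact set $\overline{W}$, hence bounded on $S$, we have
\[
\|\theta_n|_S-\theta\|_{0,S} \;\leq\; \|f_n|_S-f\|_{0,S}\cdot \Bigl\|\frac{\tau}{\sigma_\Ncal}\Bigr\|_{0,S} \;\xrightarrow[n\to\infty]{}\; 0.
\]

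There is no real obstacle in this argument; the only point requiring a little care is the bookkeeping of divisors and the symmetric ($\I$-invariance) conditions, both of which are automatic because $\tau \in \Omega_{\hgot,\I}(W)$ is nowhere zero and $\I^*\tau = \overline\tau$, so multiplying or dividing by $\tau$ preserves all the structural conditions encoded in $\Fcal_{\mgot,\I}$ and $\Omega_{\mgot,\I}$ and converts the scalar statement of Lemma \ref{lem:funaprox} verbatim into the $1$-form statement required here.
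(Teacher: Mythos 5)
Your proof is correct and follows essentially the same route as the paper: fix the nowhere-vanishing $\tau\in\Omega_{\hgot,\I}(W)$ from Proposition \ref{pro:Icero}, set $f=\theta/\tau\in\Fcal_{\mgot,\I}(S)$, apply Lemma \ref{lem:funaprox} to $f$ and $D$, and set $\theta_n=f_n\tau$. You supply slightly more detail on the divisor bookkeeping and the norm estimate than the paper, but the argument is the same.
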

\begin{proof} 
Let $\tau\in\Omega_{\hgot,\I}(W)$ having no zeros (see Proposition \ref{pro:Icero}). Label $f=\theta/\tau \in \Fcal_{\mgot,\I}(S),$ and notice that $(f)=(\theta);$ in particular  $f$ has no zeros on $S\setminus S^\circ.$ By Lemma \ref{lem:funaprox}, there exists $\{f_n\}_{n \in \n}$ in $\Fcal_{\mgot,\I}(W)$ such that  $\{\|f_n|_S-f\|_{0,S}\}_{n \in \n} \to 0$ and  $(f_n)=(f)$ and $(f_n-f)\geq D$
on $W$ for all $n\in\n.$ It suffices to set $\theta_n:=f_n \tau\in \Omega_{\mgot,\I}(W)$ for all $n\in \n.$
\end{proof}

We finish this section by proving a similar approximation result for functions in $\Gcal_\I(S)$.

\begin{lemma} \label{lem:gaprox}
Let $g \in \Gcal_\I(S)$ vanishing nowhere on $S\setminus S^\circ$,  and let $D\in \div_\I(S)$ an integral divisor with  ${\rm supp}(D)\subset S^\circ.$  

Then there exists  $\{g_n\}_{n \in \n} \subset \Gcal_\I(W)$ satisfying that $g_n-g$ is holomorphic on $R_S,$ $(g_n-g)\geq D$,
$(g_n)=(g)$ on $W$ for all $n\in\n,$ and $\{\|g_n|_S-g\|_{0,S}\}_{n \in \n} \to 0.$  In particular, $g_n$ is  holomorphic and vanishes nowhere on $W\setminus S^\circ$ for all $n\in\n.$
\end{lemma}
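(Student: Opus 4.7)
The strategy follows the pattern of Lemma \ref{lem:funaprox}, but a direct reduction to the nowhere-vanishing case is not available: the space $\Gcal_\I$ is not closed under multiplication (the product of two $\Gcal_\I$-functions has symmetry $\circ\,\I = 1/\overline{(\cdot)}$ instead of $-1/\overline{(\cdot)}$), so no reference function in $\Gcal_\I(W)$ with prescribed divisor $(g)$ can be built by a simple formula analogous to $h_0\,\overline{h_0\circ\I}$ in Lemma \ref{lem:funaprox}. Accordingly, the plan handles zeros, poles, and periods simultaneously. After the standard reductions to smooth $g$ and connected $W$, the aim is a factorization $g = v_0\,e^{\imath u}$ on $S$, with $v_0 \in \Gcal_\I(W)$ realizing $(v_0) = (g)$ on all of $W$ and $u \in \Fcal_{\hgot,\I}(S)$, after which Lemma \ref{lem:runge} supplies $h_n \in \Fcal_{\hgot,\I}(W)$ with $(h_n - u) \geq D$ and $h_n \to u$ uniformly on $S$, so that $g_n := v_0\,e^{\imath h_n}$ does the job.

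The heart of the proof is the construction of $v_0$ via the logarithmic derivative. The $1$-form $\tau := \imath\,dg/g$ belongs to $\Omega_{\mgot,\I}(S)$ (since $\I^*(dg/g) = -\overline{dg/g}$), with simple poles at ${\rm supp}((g)) \subset S^\circ$ and residues in $\imath\z$. A Mittag--Leffler construction produces $\tau_1 \in \Omega_{\mgot,\I}(W)$ with principal parts matching $\tau$ at those poles, so that $\omega := (\tau-\tau_1)|_S \in \Omega_{\hgot,\I}(S)$, and the isomorphism \eqref{eq:isom} then yields $\tau_2 \in \Omega_{\hgot,\I}(W)$ whose periods over an $\I$-basis of $\Hcal_1(S,\z)$ coincide with those of $\omega$ while the remaining basis periods in $\Hcal_1(W,\z)$ are chosen in $2\pi\z$. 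Setting $\tau_0 := \tau_1 + \tau_2$, all closed-cycle periods required for single-valued exponentiation are in place. The decisive parity condition comes from a path $\gamma_0$ from a chosen base point $P_0 \in W$ to $\I(P_0)$: a direct computation using $g(\I(P_0)) = -1/\overline{g(P_0)}$ gives $\int_{\gamma_0}\tau = -(2m+1)\pi - 2\imath\,\log|g(P_0)|$ for some $m \in \z$, so its real part is automatically an odd multiple of $\pi$; the residual imaginary part of $\int_{\gamma_0}\tau_0$ is then annihilated by replacing $\tau_0$ with $\tau_0 + dG$ for a suitable $G \in \Fcal_{\hgot,\I}(W)$ (for instance $G = H + \overline{H\circ\I}$ with $H$ holomorphic on $W$ chosen so that $\Im H(P_0) - \Im H(\I(P_0))$ has the required value), since this shifts $\int_{\gamma_0}\tau_0$ by the purely imaginary quantity $-2\imath\,\Im G(P_0)$ but alters no closed period. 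After the adjustment $\int_{\gamma_0}\tau_0 \in \pi(2\z+1)$, and $v_0(P) := \exp\bigl(-\imath\int_{P_0}^P \tau_0\bigr)$ extends to a meromorphic function on $W$ with $(v_0) = (g)$ (from the matching principal parts) and $v_0 \circ \I = -1/\overline{v_0}$ (from the parity); hence $v_0 \in \Gcal_\I(W)$.

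With $v_0$ in hand, $g/v_0$ is zero- and pole-free on $S$ and satisfies $(g/v_0) \circ \I = 1/\overline{g/v_0}$; writing $g/v_0 = e^{\imath u}$ with $u \in \Fcal_{\hgot,\I}(S)$---via the primitive of the exact form $(\tau_0 - \tau)|_S \in \Omega_{\hgot,\I}(S)$, with additive constants chosen on each connected component so that $u \circ \I = \overline{u}$, as in the proof of Lemma \ref{lem:funaprox}---and applying Lemma \ref{lem:runge} to $u$ with the divisor $D$ yields $h_n \in \Fcal_{\hgot,\I}(W)$ with $(h_n - u) \geq D$ and $h_n \to u$ uniformly on $S$. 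Setting $g_n := v_0\,e^{\imath h_n}$, one verifies: $g_n \in \Gcal_\I(W)$ (product of a $\Gcal_\I$-function with an exponential of dual symmetry type $e^{\imath h_n}\circ\I = 1/\overline{e^{\imath h_n}}$); $(g_n) = (v_0) = (g)$ on $W$, since $e^{\imath h_n}$ has no zeros or poles; $(g_n - g) \geq D$ on ${\rm supp}(D) \subset S^\circ$, because $g_n - g = v_0(e^{\imath h_n} - e^{\imath u})$ and $v_0$ is finite and nonzero there; and $\|g_n|_S - g\|_{0,S} \to 0$. The principal obstacle is the simultaneous fulfillment of the four requirements on $\tau_0$---matching principal parts, matched periods over $\Hcal_1(S,\z)$, integrality of closed periods over $\Hcal_1(W,\z)$, and the parity of the half-period $\int_{\gamma_0}\tau_0$---all in a fully $\I$-compatible way; with no reduction to the nowhere-vanishing case available as in Lemma \ref{lem:funaprox}, this intricate bookkeeping is precisely what makes the non-orientable setting substantially more delicate than its orientable counterpart.
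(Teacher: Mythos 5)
Your opening claim that ``a direct reduction to the nowhere-vanishing case is not available'' is incorrect, and this misconception drives you into a much harder path than necessary. You are right that $\Gcal_\I$ is not closed under multiplication, but the paper's reduction does not require the multiplier to lie in $\Gcal_\I$. Write $(g) = D_1\I(D_1)^{-1}$ with $D_1$ integral and $\mathrm{supp}(D_1)\cap\mathrm{supp}(\I(D_1))=\emptyset$ (this is possible because $\I((g)_0)=(g)_\infty$). Riemann--Roch gives a meromorphic $H$ on $\Ncal$ with $(H|_{\overline W})=D_1$, and $h:=H/\overline{H\circ\I}$ satisfies $h\circ\I=1/\overline{h}$ with $(h|_{\overline W})=(g)$. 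Although $h\notin\Gcal_\I$, the quotient $f:=g/h$ does lie in $\Gcal_\I(S)$: $(g/h)\circ\I=(-1/\overline{g})/(1/\overline{h})=-1/\overline{g/h}$. So $f$ is a nowhere-vanishing element of $\Gcal_\I(S)$, and once the lemma is established in that case one multiplies the approximating sequence back by $h$; the divisor and $-1/\overline{(\cdot)}$-symmetry are both restored. This is exactly what the paper does, and it reduces $\imath\,dg/g$ to a \emph{holomorphic} $1$-form before any period bookkeeping starts.

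Because you forgo this reduction, your construction of $v_0$ carries the full burden of Mittag--Leffler with $\I$-compatibility, $\z$-integrality of periods over $\Hcal_1(W,\z)$, matching of periods over $\Hcal_1(S,\z)$, and the parity of the half-period, all at once, and several of these steps are left at the level of assertion. For instance, the isomorphism \eqref{eq:isom} controls periods over an $\I$-basis of $\Hcal_1(W,\r)$, which does not by itself guarantee integrality over all of $\Hcal_1(W,\z)$; and the odd-parity claim for $\Re\int_{\gamma_0}\tau_0$ only follows from your computation of $\Re\int_{\gamma_0}\tau$ after one first verifies that $\tau_0-\tau$ is exact on $S$, a point you invoke implicitly. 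None of this is unfixable, but the paper's reduction makes the analogous claims routine: with $g$ nowhere vanishing, $\imath\,dg/g\in\Omega_{\hgot,\I}(S)$ is holomorphic and single-valued (so periods over $\Hcal_1(S,\z)$ lie in $2\pi\z$ automatically), the factorization $g=v\,e^{-\imath u}$ with $v\in\Gcal_\I(W)$ nowhere vanishing proceeds just as in Claim~\ref{cla:funciones}, and the fact that $v(\overline{v\circ\I})=-1$ rather than $+1$ is extracted \emph{a posteriori} from the identity $u\circ\I=\overline{u}+m\pi$ and the involutivity of $\I$ (which forces $m=0$), without needing your direct evaluation of $\int_{\gamma_0}\tau$. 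The core gap, then, is the false premise that motivated the detour; the detour itself is not fully justified as written.
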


\begin{proof}
Since  smooth functions are dense in $\Gcal_{\mgot,\I}(S)$   with respect to the $\Ccal^0$ topology, we can suppose without loss of generality that $g$ is smooth.  Furthermore, since the same argument applies
separately to each connected component, we may assume that $W$ is a domain.

\begin{claim}
It suffices to prove the lemma just for  nowhere-vanishing functions in $\Gcal_{\I}(S)$.
\end{claim}
\begin{proof} Assume that $(g)\neq 1$. Then, consider a non constant meromorphic function $h$ on $\Ncal$ satisfying $h\circ \I=1/\overline{h}$ and $(h|_{\overline{W}})=(g)$. To construct $h$, 
write $ (g)=D_1 \I(D_1)^{-1},$ where $D_1$ is an integral divisor and ${\rm supp}(D_1) \cap {\rm supp}(\I(D_1))=\emptyset.$ Since $\Ncal\setminus \overline{W}$ is open, the Riemann-Roch theorem provides a meromorphic function $H$ on $\Ncal$ with $(H|_{\overline{W}})=D_1.$ Setting $h= {H}/{\overline{H \circ \I}}$ we are done.

The function  $f=g/h\colon S\to\overline{\c}$ lies in $\Gcal_\I(S)$ and is nowhere vanishing. By our assumption, there exists a sequence of nowhere vanishing functions $\{f_n\}_{n\in \n}$ in $\Gcal_{\I}(W)$ approximating $f$ on $S$, and satisfying that $f_n|_S-f$ is holomorphic on $R_S$ and $(f_n-f)\geq D (g)^{-1}$ for all $n$.

Choosing $g_n:=f_n h\in \Gcal_{\mgot,\I}(W)$, one has that $g_n|_S-g$ is holomorphic on $R_S$, $(g_n|_S-g)\geq D$ and $(g_n)=(g)$ for all $n\in\n,$ and $\{g_n\}_{n\in \n}$ approximates $g$ on $S$.
\end{proof}

%

%

In the sequel we will suppose that $g$ is nowhere vanishing.

Notice that $\imath dg/g\in \Omega_{\hgot,\I}(W)$.  Reasoning as in the proof of Lemma \ref{lem:funaprox},  there exists a holomorphic $1$-form $\tau\in \Omega_{\hgot,\I}(W)$ such that
\begin{equation}\label{eq:tau-exact1}
\int_\gamma \imath dg/g=\int_\gamma \tau \enskip  \forall \gamma \in \Hcal_1(S,\z)
\end{equation}
and 
\begin{equation} \label{eq:res0bis}
\frac{1}{2 \pi} \int_\gamma \tau \in \z \enskip  \forall \gamma \in \Hcal_1(W,\z).
\end{equation}

We need the following
\begin{claim}\label{cla:funciones1}
There exist a nowhere vanishing holomorphic $v\in \Gcal_\I(W)$ and a function $u\in \Fcal_{\hgot,\I}(S)$ such that 
$d \log(v)=-\imath \tau$, $du=\imath dg/g-\tau|_S$,  and $g=v e^{-\imath u}$.
\end{claim}
\begin{proof}
To construct  $v$, fix $P_0\in W$  and  reasoning as in Claim \ref{cla:funciones} observe that  $ \frac1{\pi}\Re \big(  \int_{{P_0}}^{{\I(P_0)}}\tau \big) \in \z$ independently on the arc connecting $P_0$ and $\I(P_0)$. The function 
$v:=e^{-\imath \int_{{P_0}} \tau+\frac{\imath}2 \int_{{P_0}}^{{\I(P_0)}}  \tau}$ is well defined (see \eqref{eq:res0bis}), nowhere vanishing,  and satisfies $d \log(v)=-\imath \tau$. Furthermore, 
\begin{equation} \label{eq:dicotomia}
\text{$v (\overline{v \circ \I})=\pm 1,$ depending on whether $\frac{1}{\pi} \big( \Re \int_{P_0}^{\I(P_0)}\tau \big)$ is even or odd.}
\end{equation}


To construct $u$ we proceed as in Claim \ref{cla:funciones}. Take into account that $\imath dg/g-\tau|_S$ lies in $\Omega_{\hgot,\I}(S)$ and is exact, see \eqref{eq:tau-exact1}. For each  connected component $C$ of $S$, fix $P_C\in C$ and set  $u|_{C}:=A_C+ \int_{P_{C}} (\imath dg/g-\tau)$, where $A_C\in \c$ is chosen so that $(g-v e^{-\imath u})(P_C)=0$. Since  $\kappa:=g/(v e^{-\imath u})$ is locally constant on $S$ and $\kappa(P_C)=1$ for any connected component $C$ of $S$, we infer that $\kappa=1$ and $g=v e^{-\imath u}$ on $S$.  

The facts $g\in \Gcal_{\I}(S)$ and \eqref{eq:dicotomia} imply that $u\circ \I=\bar u +m\pi$ for some $m\in \z.$ Since $\I$ is an involution, we infer that $m=0$,  $u\in \Fcal_{\hgot,\I}(S)$, and $v (\overline{v \circ \I})=-1$; see  \eqref{eq:dicotomia}. 

The functions $u$ and $v$ solve the claim.
\end{proof}

By Lemma \ref{lem:runge}, there exists a sequence $\{h_n\}_{n \in \n}
\subset \Fcal_{\hgot,\I}(W)$ such that  $\{\| h_n|_S-u\|_{0,S}\}_{n \in \n} \to 0$ and $(h_n-u)_0\geq D$ for all $n\in \n$.
The sequence of nowhere vanishing functions $\{g_n:= e^{-\imath h_n}v\}_{n\in \n}\subset \Gcal_{\hgot,\I}(W)$ proves the lemma. 
\end{proof}


\section{Runge-Mergelyan's Theorem for non-orientable minimal surfaces}\label{sec:Mer}

In this section we prove the main result of the paper (Theorem \ref{th:Mergelyan}). Most of the technical computations are contained in the following Lemma \ref{lem:weiaprox}; Theorem \ref{th:Mergelyan} will follow by a recursive application of it. In particular, Lemma \ref{lem:weiaprox} deals with the problem of controlling the periods in the approximation procedure.

\begin{lemma} \label{lem:weiaprox}
Let $S\subset \Ncal$ be an $\I$-admissible subset, let $W$ be a relatively compact $\I$-invariant domain in $\Ncal$ with finite topology, containing $S$, and let $\Phi=(\phi_j)_{j=1,2,3}$ be a smooth triple in $\Omega_{\hgot,\I}(S)^3$ such that $\sum_{j=1}^3 \phi_j^2=0$ and $\sum_{j=1}^3 |\phi_j|^2$ vanishes nowhere on $S.$  

Then  $\Phi$ can be approximated in the $\Ccal^0$ topology on $S$ by a sequence $\{\Phi_n=(\phi_{j,n})_{j=1,2,3}\}_{n \in \n}\subset \Omega_{\hgot,\I}(W)^3$ meeting the following requirements:
\begin{enumerate}[\rm (i)]
  \item $\phi_{3,n}$ vanishes nowhere on $W\setminus R_S$. Furthermore, $(\phi_{3,n}|_E)=(\phi_3|_E)\in\div(E)$ for any connected component $E$ of $R_S$ such that $\phi_3$ does not vanish everywhere on $E$.
  \item $\sum_{j=1}^3 \phi_{j,n}^2=0$ and $\sum_{j=1}^3 |\phi_{j,n}|^2$ vanishes nowhere on $W.$
  \item $\Phi_n-\Phi$ is exact on $S$ for all $n\in\n.$
\end{enumerate}
\end{lemma}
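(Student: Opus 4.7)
The plan is to reduce the approximation of $\Phi$ to that of its Weierstrass data. On any component $E$ of $R_S$ where $\phi_3 \not\equiv 0$ (components on which $\phi_3$ vanishes identically are handled by a coordinate permutation $(\phi_1,\phi_2,\phi_3)\mapsto(\phi_3,\phi_1,\phi_2)$), I set $g:=\phi_3/(\phi_1-\imath\phi_2)$. The relations $\I^*\Phi=\overline{\Phi}$ and $\sum_j\phi_j^2=0$ formally give $g\in\Gcal_\I(S)$ and $\phi_3\in\Omega_{\hgot,\I}(S)$, and the non-vanishing of $\sum_j|\phi_j|^2$ forces the order of every pole of $g$ on $S$ to be no larger than half the order of the corresponding zero of $\phi_3$; in particular $g$ has no zeros or poles on the compact $1$-dimensional locus $S\setminus S^\circ=bR_S\cup C_S$, where Lemmas \ref{lem:formaprox} and \ref{lem:gaprox} are available.

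\textbf{Approximation of the Weierstrass data.} Applying Lemma \ref{lem:formaprox} to $\phi_3$ provides $\widetilde\phi_{3,n}\in\Omega_{\mgot,\I}(W)$ with $(\widetilde\phi_{3,n})=(\phi_3)$ on $W$ (so $\widetilde\phi_{3,n}$ is holomorphic and nowhere zero on $W\setminus S^\circ$, matching part of (i)) and with $\widetilde\phi_{3,n}|_S-\phi_3$ vanishing to a prescribed integral divisor $D\in\div_\I(S)$; Lemma \ref{lem:gaprox} gives $\widetilde g_n\in\Gcal_\I(W)$ with $(\widetilde g_n)=(g)$ and with $\widetilde g_n|_S-g$ vanishing to the same order $D$. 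Choosing $D$ large enough to dominate twice the polar divisor of $g$ on $S^\circ$, the triple
\[
\widetilde\Phi_n:=\Big(\tfrac{1}{2}(1/\widetilde g_n-\widetilde g_n),\ \tfrac{\imath}{2}(1/\widetilde g_n+\widetilde g_n),\ 1\Big)\widetilde\phi_{3,n}
\]
lies in $\Omega_{\hgot,\I}(W)^3$ (potential poles are cancelled by the matched zeros of $\widetilde\phi_{3,n}$) and satisfies (i) and (ii); convergence $\widetilde\Phi_n\to\Phi$ in $\Ccal^0$ on $S$ is inherited from the two approximation lemmas.

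\textbf{Period correction.} The triple $\widetilde\Phi_n$ need not satisfy (iii). Fixing an $\I$-basis $\{c_0,\ldots,c_{\nu_0},d_1,\ldots,d_{\nu_0}\}$ of $\Hcal_1(S,\z)=\Hcal_1(W,\z)$ as in Def \ref{def:ibasis}, the symmetries \eqref{eq:c+d} imply that the periods of any element of $\Omega_{\hgot,\I}(S)^3$ are purely imaginary along each $c_j$ and purely real along each $d_j$; so exactness of $\widetilde\Phi_n-\Phi$ amounts to a system of $3(2\nu_0+1)$ real equations. I would introduce an $\I$-equivariant multiplicative perturbation family $g_n^{(t)}:=\widetilde g_n\exp\bigl(\imath\sum_k t_k h_k\bigr)$ with $h_k\in\Fcal_{\hgot,\I}(W)$ and real parameters $t=(t_k)$ (the condition $h_k\in\Fcal_{\hgot,\I}$ ensures $g_n^{(t)}\in\Gcal_\I(W)$), leaving $\widetilde\phi_{3,n}$ unchanged. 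This preserves nullity and the divisor and symmetry conditions of (i) and (ii) automatically. Using the isomorphism \eqref{eq:isom}, I would choose the finitely many $h_k$ so that the differential at $t=0$ of the period map $t\mapsto\int_\bullet\Phi_n^{(t)}$ is surjective onto the target $(\imath\r)^{\nu_0+1}\times\r^{\nu_0}$ in each of the three coordinates. The implicit function theorem then furnishes $t(n)\to 0$ killing all periods of $\Phi_n^{(t(n))}-\Phi$, and I set $\Phi_n:=\Phi_n^{(t(n))}$.

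\textbf{Main obstacle.} The heart of the argument is the $\I$-equivariant period-domination step: one must produce perturbations that (a) respect the antiholomorphic symmetry $\I^*\Phi_n^{(t)}=\overline{\Phi_n^{(t)}}$, (b) preserve the nullity and the prescribed divisor structure from (i)--(ii), and (c) produce a surjective period differential along both the ``antisymmetric'' generators $c_j$ and the ``symmetric'' generators $d_j$ of an $\I$-basis simultaneously. The existence of sufficiently many $h_k$ is guaranteed by \eqref{eq:isom}, but the delicate point is to arrange the resulting $3(2\nu_0+1)\times 3(2\nu_0+1)$ real period matrix to be invertible while keeping all constructions compatible with $\I$; this is the main technical novelty compared with the orientable case of \cite{AL-proper}.
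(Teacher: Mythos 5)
Your overall strategy --- passing to Weierstrass data $(g,\phi_3)$, approximating each by global objects via Lemmas~\ref{lem:formaprox} and~\ref{lem:gaprox}, and then killing periods by an implicit-function-theorem argument for an $\I$-equivariant multiplicative perturbation --- is essentially the paper's, and you rightly identify the full-rank statement for the period differential as the technical heart. But the period-correction step as you set it up cannot work: you perturb only $g$ via $g_n^{(t)}=\widetilde g_n\exp\bigl(\imath\sum_k t_k h_k\bigr)$ while explicitly keeping $\widetilde\phi_{3,n}$ fixed. The third component of $\Phi_n^{(t)}$ is $\widetilde\phi_{3,n}$ itself, independent of $t$, so its periods are frozen; and Lemma~\ref{lem:formaprox} gives no control on the periods of $\widetilde\phi_{3,n}-\phi_3$ on $S$. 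Hence the third-coordinate periods of $\Phi_n^{(t)}-\Phi$ are in general nonzero for every $t$, and the differential of the period map is identically zero in the third slot --- contradicting your claim that it can be made ``surjective in each of the three coordinates.'' The paper avoids this by using a two-parameter family $(e^{\imath h_1}g,\,e^{h_2}\phi_3)$ and a period map on $\Fcal_{\hgot,\I}(\overline W)^2$; the extra exponential factor on $\phi_3$ is indispensable, and the hard Claim~\ref{ass:regular} (rank $6\nu_0+3$) is proved precisely for that two-parameter family.

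Two further gaps. (a) Your assertion that the immersion condition $\sum_j|\phi_j|^2>0$ forces $g$ to have neither zeros nor poles on $S\setminus S^\circ$ is incorrect: it only forces the pole order of $g$ at a point to equal (not bound, and not ``half'') the vanishing order of $\phi_3$ there, and it does not exclude such points from $bR_S\cup C_S$. Since Lemmas~\ref{lem:formaprox} and~\ref{lem:gaprox} require $\phi_3$ and $g$ to vanish nowhere on $S\setminus S^\circ$, one genuinely needs the reduction of Claim~\ref{ass:motion} (shrinking $R_S$ inside a tubular neighborhood and passing to a diagonal subsequence), which your argument omits. (b) Handling components $E$ with $\phi_3|_E\equiv 0$ by a fixed coordinate permutation does not yield a single triple in $\Omega_{\hgot,\I}(W)^3$: $W$ is a connected domain, so you cannot permute coordinates on some components of $R_S$ and not on others and glue the results. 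The paper's Claim~\ref{ass:gauss} instead rotates $\Phi$ on those components by matrices $A_n\to I$, applies the nondegenerate case to each rotated triple, and extracts a diagonal sequence.
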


\begin{proof} 
Denote by 
\[
g=\frac{\phi_3}{\phi_1- \imath \phi_2} ,
\]
and recall that $g\circ \I= -1/\overline{g}$; see Sec.\ \ref{sec:minimal-I}. This implies in particular that $g|_{R_S}$ is not constant, but it could be locally constant. We rule out this possibility in the following claim.

\begin{claim} \label{ass:gauss}
Without loss of generality, it can be assumed that $g|_E$ is not constant for any connected component $E$ of $R_S$.
\end{claim}
\begin{proof} 
Assume that the lemma holds when $g$ is non-constant on every connected component of $R_S$, and let us show that it also holds in the general case.

We discuss first the following particular case.

\noindent{\em Case 1.} If $g$ vanishes everywhere on no connected component of $R_S$, then the lemma holds.

Indeed, let $E$ be a connected component of $R_S$ such that $g|_E$ is constant. By our assumption,  
\begin{equation} \label{eq:nowhere0}
\text{$g|_E\neq 0,\infty$, and $\phi_3|_E$ is nowhere vanishing.}
\end{equation}
Since $g\circ \I= -1/\overline{g}$, then $E\cap \I(E)=\emptyset$. Label $E_1,\I(E_1),\ldots, E_k,\I(E_k)$ the family of connected components $E$ of $R_S$ such that $g|_E$ is constant.

Denote by $\Lambda_1 =\cup_{j=1}^k E_j$ and by $\Lambda_2 =R_S\setminus (\Lambda_1\cup\I(\Lambda_1))$. Let $\Bcal_1$ be a homology basis of $\Hcal_1(\Lambda_1,\z)$ and denote by $\nu_1\in \n$ the number of elements in $\Bcal_1.$ Denote by $\Ocal(\Lambda_1)$ the space of holomorphic functions $\Lambda_1\to\c.$ 

For each $h\in \Ocal(\Lambda_1),$ consider the holomorphic function and $1$-form on $\Lambda_1$ given by
\begin{equation}\label{eq:g(h)}
g(h):=(g+h) \quad \text{and}\quad  \phi_3(h):=\frac{g+h}{g} \phi_3 .
\end{equation}

Let $\Pcal\colon \Ocal(\Lambda_1) \to \c^{2 \nu_1}$ be the {\em period map} given by 
\[
\Pcal(h)=\Big(\int_c \big( g(h)\phi_3(h)-g\phi_3 \,,\, \phi_3(h)-\phi_3 \big) \Big)_{c \in \Bcal_1}.
\]
Notice that $\Pcal(\lambda h)=0$ for any $\lambda \in \c$ and any $h\in \Ocal(\Lambda_1)$ with $\Pcal(h)=0$. Since $\Ocal(\Lambda_1)$ has infinite dimension, then there exists a non-constant function $h \in \Pcal^{-1}(0)\subset\Ocal(\Lambda_1).$ Let $\{\lambda_n\}_{n \in \n}\subset \c\setminus \{0\}$ be any sequence converging to zero, and define 
\begin{equation}\label{eq:T=0}
h_n:=\lambda_n h\in \Pcal^{-1}(0)\quad \text{for all $n\in\n$}.
\end{equation}
Obviously, $\{h_n\}_{n \in \n}\to 0$ in the $\Ccal^0$ topology on $\Lambda_1$; hence without loss of generality we can assume that 
\begin{equation} \label{eq:nowhere}
\text{$g(h_n)$ and $\phi_3(h_n)$ vanish nowhere on $\Lambda_1$ for all $n$.}
\end{equation}
Choose smooth $g_n\in\Gcal_\I(S)$ and $\phi_{3,n}\in \Omega_{\hgot,\I}(S)$ such that 
\[
g_n|_{\Lambda_1}=g(h_n),\enskip g_n|_{\Lambda_2}=g,\enskip \phi_{3,n}|_{\Lambda_1}=\phi_3(h_n),\enskip \phi_{3,n}|_{\Lambda_2}=\phi_3 ,
\]
and $\{\Psi_n\}_{n\in\n}\subset\Omega_{\hgot,\I}(S)$ converges to $\Phi$ in the $\Ccal^0$ topology on $S$; where $\Psi_n$ are the Weierstrass data associated to $(g_n,\phi_{3,n})$ via \eqref{eq:recoverX}. Notice that $(\phi_{3,n})=(\phi_3)$ for all $n$; see \eqref{eq:nowhere0} and \eqref{eq:nowhere}. Observe that \eqref{eq:g(h)} and \eqref{eq:T=0} imply that $\Psi_n-\Phi$ is exact on $R_S=\Lambda_1\cup\I(\Lambda_1)\cup\Lambda_2$. Furthermore, up to a slight smooth deformation of $g_n$ and $\phi_{3,n}$ over $C_S$, we can also assume that $\Psi_n-\Phi$ is exact on $S$ for all $n\in\n$. 

By our assumptions, the lemma holds for the triple $\Psi_n$, for all $n\in\n$. To finish, use a standard diagonal argument.

\noindent{\em Case 2.} If $g$ vanishes everywhere on some connected components of $R_S$, then the lemma holds. 

Call  $\Lambda_0\neq \emptyset$ the union of those connected components of $R_S$ on which $g$ is identically $0$ or $\infty$, and notice that $\phi_3|_{\Lambda_0}$ vanishes everywhere and $\Phi|_{\Lambda_0}$ is exact. Take a sequence $\{A_n\}_{n \in \n} \subset \Ocal(3,\r)$ converging to the identity matrix such that the third coordinate of $(\Phi|_{\Lambda_0})\cdot A_n$ vanishes everywhere on no connected component of $\Lambda_0$.  Choose a smooth $\Theta_n \in \Omega_{\hgot,\I}(S)^3$ such that  $\Theta_n|_{\Lambda_0}:=(\Phi|_{\Lambda_0})\cdot A_n$, $\Theta_n|_{R_S\setminus \Lambda_0}:=\Phi|_{R_S\setminus \Lambda_0}$, $\Theta_n-\Phi$ is exact on $S$, and $\{\Theta_n\}_{n\in \n}\to \Phi$ in the $\Ccal^0$ topology on $S$. Since the third coordinate of $\Theta_n$ vanishes everywhere on no connected component of $R_S$, the conclusion of Lemma 
\ref{lem:weiaprox} holds for each $\Theta_n$; take into account Case 1. By a diagonal argument, it also holds for $\Phi$ and we are done.

This proves the claim.
\end{proof}

From now on, we assume that $g$ is non-constant on every connected component of $R_S$. Let us check the following

\begin{claim} \label{ass:motion}
Without loss of generality,  it can be assumed that $\phi_j$ and $d(\phi_i/\phi_j)$ vanish nowhere on $(b R_S) \cup C_S$, for all $i,j\in\{1,2,3\}$, $i\neq j$. In particular, $g$ vanishes nowhere on $C_S$; hence $g  \in \Gcal_\I(S)$.
\end{claim}
\begin{proof}
Let $M_1 \supset M_2 \supset \ldots$ be a sequence of $\I$-invariant compact regions in $W$ such that $M_n^\circ$ is a tubular neighborhood of $R_S$ in $W$ for all $n\in\n$,  $M_n \subset M_{n-1}^\circ$ for all $n\in\n$, $\cap_{n \in  \n} M_n=R_S,$ $\Phi$ holomorphically extends to $M_1$,  $\sum_{j=1}^3 |\phi_j|^2$ vanishes nowhere on $M_1$, and  $\phi_j$ and $d(\phi_i/\phi_j)$ vanish nowhere on $b M_n$, for all $i,j\in\{1,2,3\}$, $i\neq j$, $n\in\n$  (recall that $g$ is  constant over no connected component of $R_S$ by Claim \ref{ass:gauss}). In addition, choose $M_n$ so that $S_n:=M_n \cup C_S\subset W$ is an $\I$-admissible set in $\Ncal$ and $\gamma\setminus M_n^\circ$ is a non-empty Jordan arc, for any component $\gamma$ of $C_S.$ Observe that $C_{S_n}= C_S\setminus M_n^\circ$, for all $n \in \n.$

Let  $(h_n,\psi_{3,n})\in \Gcal_\I(S_n) \times \Omega_{\hgot,\I}(S_n)$ be any smooth couple meeting the following requirements:
\begin{itemize}
\item  $(h_n,\psi_{3,n})|_{R_{S_n}}=(g,\phi_3)|_{R_{S_n}}$ and $\sum_{j=1}^3 |\psi_{j,n}|^2$ vanishes nowhere on $S_n$; where $\Psi_n=(\psi_{j,n})_{j=1,2,3}\in \Omega_{\hgot,\I}(S_n)^3$ are the Weierstrass data associated to $(h_n,\psi_{3,n})$ via \eqref{eq:recoverX}.

\item  $\psi_{j,n}$ and $d(\psi_{i,n}/\psi_{j,n})$ vanish nowhere on $(b R_{S_n}) \cup C_{S_n}$, for all $i,j\in\{1,2,3\}$, $i\neq j$.

\item $\Psi_n|_S-\Phi$ is exact on $S$.

\item the sequence $\{\Psi_n|_S\}_{n \in \n} \subset \Omega_{\hgot,\I}(S)^3$ converges to $\Phi$ in the $\Ccal^0$ topology on $S.$
\end{itemize}
The existence of such sequence follows from similar arguments as those in Claim \ref{ass:gauss}; just use classical approximation results by smooth functions to suitably extend the couple $(g,\phi_3)|_{R_{S_n}}$ to $C_{S_n}$.

If we assume that the lemma holds for the triple $\Psi_n$ and the $\I$-admissible set $S_n$, for all $n\in\n$, then, using again a standard diagonal argument, we conclude that it also holds for the triple $\Phi$.
\end{proof}
From now on, we assume that $\phi_j$ and $d(\phi_i/\phi_j)$ vanish nowhere on $(b R_S) \cup C_S$, for all $i,j\in\{1,2,3\}$, $i\neq j$.

Let $W_S\subset W$ be a domain of finite topology  such that $S\subset W_S$ and $i_*\colon \Hcal_1(S,\z)\to \Hcal_1(W_S,\z)$ is an isomorphism, where as usual $i\colon S\to W_S$ denotes the inclusion map.  
Denote by $\nu=2 \nu_0+1,$ $ \nu_0\in\n$, the number of generators of $\Hcal_1(S,\z)$ of $S$. Take an $\I$-basis $\Bcal_S=\{c_0,c_1,\ldots,c_{\nu_0},d_1,\ldots,d_{\nu_0}\}$ of the homology group with real coefficients $\Hcal_1(S,\r)\stackrel{i_*}{\equiv} \Hcal_1(W_S,\r)$; see  Def.\ \ref{def:ibasis}. 

Recall that 
\begin{equation}\label{eq:c-d}
\I_*(c_j)=-c_j\quad \text{and}\quad \I_*(d_j)=d_j,\quad \text{for all $j$}.
\end{equation}

For any couple of functions $(h_1,h_2)\in \Fcal_{\hgot,\I}(\overline{W})^2$, denote by $\Phi(h_1,h_2)$ the Weierstrass data on $S$ associated to the pair $(e^{\imath h_1} g,e^{h_2}\phi_3)$ by \eqref{eq:recoverX}; that is,
\[
\Phi(h_1,h_2)=\left( \frac12 \Big(\frac1{e^{\imath h_1}g}-e^{\imath h_1}g \Big) \,,\, \frac{\imath}2 \Big(\frac1{e^{\imath h_1}g}+e^{\imath h_1}g \Big)  \,,\, 1\right)e^{h_2}\phi_3 .
\]

Observe that $\Phi(h_1,h_2)\in \Omega_{\hgot,\I}(S)^3$, and so, by \eqref{eq:c-d},
\begin{equation} \label{eq:simper}
\int_{c_j}\Phi(h_1,h_2)\in\imath\r^3 \quad \text{and}\quad \int_{d_j}\Phi(h_1,h_2)\in\r^3 \quad \text{$\forall j$ and $(h_1,h_2)$.}
\end{equation}
The same happens in particular to the triple $\Phi=\Phi(0,0)$.

Denote by $\Pcal\colon \Fcal_{\hgot,\I}(\overline{W})^2 \to \r^{6\nu_0+3}\equiv \r^{3\nu_0+3}\times \r^{3\nu_0}$ the {\em period map} given by
\begin{equation}\label{eq:periodmap}
\Pcal (h_1,h_2)=
\Big( \Big[\Im\int_{c_j} \Phi(h_1,h_2)-\Phi\Big]_{j\in\{0,\ldots,\nu_0\}}  \,,\, \Big[\int_{d_j} \Phi(h_1,h_2)-\Phi \Big]_{j\in\{1,\ldots,\nu_0\}}\Big).
\end{equation}
Notice that $\Phi(h_1,h_2)$ satisfies items {\rm (i)} and {\rm (ii)} in the lemma; if in addition $\Pcal(h_1,h_2)=0$, then it also meets item {\rm (iii)} (take into account \eqref{eq:simper}). On the other hand, endowing the real space $\Fcal_{\hgot,\I}(\overline{W})^2$ with the maximum norm, one has that the period map $\Pcal$ above is Fr\'echet differentiable. 

The key to the proof of Lemma \ref{lem:weiaprox} is to show that the Fr\'echet derivative of $\Pcal$ has maximal rank equal to $6\nu_0+3$ at $(0,0)$.

\begin{claim} \label{ass:regular}
The Fr\'echet derivative $\Acal\colon \Fcal_{\hgot,\I}(\overline{W})^2\to \r^{6 \nu_0+3}$ of $\Pcal$ at $(0,0)$ has maximal rank.
\end{claim}
\begin{proof} 
We proceed by contradiction. Assume that $\Acal(\Fcal_{\hgot,\I}(\overline{W})^2)$ is contained in a linear subspace
\begin{multline*}
\Ucal=\Big\{\big([(x_{c_j,k})_{k=1,2,3}]_{j=0,\ldots,\nu_0},[(x_{d_j,k})_{k=1,2,3}]_{j=1,\ldots,\nu_0}\big) \in \r^{6 \nu_0+3} \colon\\
\sum_{k=1}^3\Big(\sum_{j=0}^{\nu_0}   \lambda_{c_j,k} x_{c_j,k}+ \sum_{j=1}^{\nu_0}  \lambda_{d_j,k} x_{d_j,k}\Big)=0\Big\}\subset\r^{6\nu_0+3} ,
\end{multline*}
where 
$\lambda_{c_j,k}$ and $\lambda_{d_j,k}$ are real numbers for all $j$ and $k$, not all of them equal to zero. Let $\Gamma_k$ be the element of the homology group with complex coefficients $\Hcal_1(S,\c)$ given by
\begin{equation}\label{eq:Gamma}
\Gamma_k=-\imath\sum_{j=0}^{\nu_0}\lambda_{c_j,k}c_j +\sum_{j=1}^{\nu_0}\lambda_{d_j,k}d_j,\quad k=1,2,3\,.
\end{equation}

Since $\Acal(\Fcal_{\hgot,\I}(\overline{W})^2)\subset\Ucal$, then
\begin{equation} \label{eq:fun0}
-\int_{\Gamma_1} h\phi_2+\int_{\Gamma_2}h\phi_1=0\quad \text{for all $h\in \Fcal_{\hgot,\I}(\overline{W})$}
\end{equation}
and
\begin{equation} \label{eq:fun}
\int_{\Gamma_1} h\phi_1+\int_{\Gamma_2} h\phi_2 +\int_{\Gamma_3} h\phi_3=0\quad \text{for all $h\in \Fcal_{\hgot,\I}(\overline{W})$.}
\end{equation}

Let us show that $\Gamma_1=0.$

Indeed, reason by contradiction and assume that $\Gamma_1\neq 0$. Denote by $\Sigma_1=\{f \in \Fcal_{\hgot,\I}(\overline{W}) \colon (f) \geq (\phi_1)^2\}.$ By Claim \ref{ass:motion}, the function $df/\phi_1$ lies in $\Fcal_{\hgot,\I}(S).$ Therefore, for any $f \in \Sigma_1,$ Lemma \ref{lem:runge} applies and insures that $df/\phi_1$ can be approximated in the $\Ccal^0$ topology on $\Fcal_{\hgot,\I}(S)$ by functions in  $\Fcal_{\hgot,\I}(\overline{W})$. As a consequence, equation \eqref{eq:fun0} can be applied formally to $h=df/\phi_1$, implying that 
\[
\int_{\Gamma_1} \frac{\phi_2}{\phi_1} df=0\quad\text{for all $f \in \Sigma_1.$}
\]
By Claim \ref{ass:motion} one can integrate by parts in the above equation and obtain that
\begin{equation} \label{eq:fun1}
\int_{\Gamma_1} f d\Big(\frac{\phi_2}{\phi_1}\Big) =0 \quad\text{for all $f \in \Sigma_1.$}
\end{equation} 

Since $\Gamma_1\neq 0$, the isomorphism \eqref{eq:isom}  gives a holomorphic $1$-form $\tau\in\Omega_{\hgot,\I}(\overline{W})$ such that 
\begin{equation}\label{eq:contra1}
\int_{\Gamma_1}\tau \in \r\setminus\{0\};
\end{equation}
take into account \eqref{eq:Gamma} and \eqref{eq:c-d}.

On the other hand, since $W$ is open and relatively compact in $\Ncal$, then there exists $u \in  \Fcal_{\hgot,\I}(\overline{W})$ such that
$(\tau+du)_0 \geq (\phi_1)^2 (d(\phi_2/\phi_1))$; use Riemann-Roch theorem. Set 
\[
f_0:=\frac{\tau+du}{d(\phi_2/\phi_1)}\in\Fcal_{\hgot,\I}(S)
\]
(see Claim \ref{ass:motion}) and note that $(f_0) \geq (\phi_1)^2$. By Lemma \ref{lem:runge}, $f_0$ can be approximated in the $\Ccal^0$ topology on $\Fcal_{\hgot,\I}(S)$ by functions in  $\Sigma_1$; hence equation \eqref{eq:fun1} can be applied formally to $f=f_0,$ giving that $0=\int_{\Gamma_1} (\tau+du)=\int_{\Gamma_1} \tau$; contradicting \eqref{eq:contra1}.

Therefore,  $\Gamma_1=0$ and equation \eqref{eq:fun0} becomes
\begin{equation}\label{eq:fun00}
\int_{\Gamma_2}h\phi_1=0\quad \text{for all $h\in\Fcal_{\hgot,\I}(\overline{W})$}.
\end{equation}

Next we show that $\Gamma_2=0$. As above, reasoning by contradiction, we can find a $1$-form $\tau\in\Omega_{\hgot,\I}(\overline{W})$ and a function $u \in  \Fcal_{\hgot,\I}(\overline{W})$ such that 
\begin{equation}\label{eq:contra2}
\int_{\Gamma_2} \tau \neq 0
\end{equation}
and $(\tau+du)_0 \geq  (\phi_2).$ In this case, we set 
\[
h=\frac{\tau+du}{\phi_2}\in \Fcal_{\hgot,\I}(S);
\]
see Claim \ref{ass:motion}. By Lemma \ref{lem:runge}, one can approximate $h$ in the $\Ccal^0$ topology on $\Fcal_{\hgot,\I}(S)$ by functions in $\Fcal_{\hgot,\I}(\overline{W})$; hence \eqref{eq:fun00} formally applies to $h$ giving that $0=\int_{\Gamma_2} (\tau+du)=\int_{\Gamma_2} \tau$; which contradicts \eqref{eq:contra2}.

Finally, since $\Gamma_1=\Gamma_2=0$, then \eqref{eq:fun} becomes $\int_{\Gamma_3} h\phi_3=0$ for all $h\in \Fcal_{\hgot,\I}(\overline{W})$. The same argument as that in the previous paragraph gives that $\Gamma_3=0$ as well.

Since $\Gamma_k=0$ for all $k=1,2,3$, then \eqref{eq:Gamma} implies that $\lambda_{c_j,k}=0=\lambda_{d_j,k}$ for all $j$ and $k$. This contradiction finishes the proof.
\end{proof}

Let $\{e_1,\ldots,e_{6 \nu_0+3}\}$ be a basis of $\r^{6 \nu_0+3}$. For any $j\in\{1,\ldots,6\nu_0+3\}$ choose
$H_j=(h_{1,j},h_{2,j}) \in \Acal^{-1}(e_j)\subset\Fcal_{\hgot,\I}(\overline{W})^2$, and denote by $\Qcal\colon \r^{6 \nu_0+3} \to \r^{6 \nu_0+3}$ the analytical map given by
\[
\Qcal((x_j)_{j=1,\ldots,6 \nu_0+3})=\Pcal\Big(\sum_{j=1}^{6\nu_0+3} x_j H_j\Big) ,
\] 
where $\Pcal$ is the period map \eqref{eq:periodmap}. Claim \ref{ass:regular} guarantees that the differential of $\Qcal$ at $0\in\r^{6\nu_0+3}$ is an isomorphism; hence there exists a closed Euclidean ball $U\subset
\r^{6 \nu_0+3}$ centered at the origin, satisfying that $\Qcal\colon U \to
\Qcal(U)$ is an analytical diffeomorphism. In particular,
$0=\Qcal(0)$ lies in the interior of  $\Qcal(U).$

On the other hand, taking into account Claim \ref{ass:motion}, Lemmas \ref{lem:formaprox} and \ref{lem:gaprox} furnish a sequence $\{(\sigma_n,\psi_{3,n})\}_{n
\in \n} \subset \Gcal_{\I}(\overline{W})\times \Omega_{\hgot,\I}(\overline{W})$ such that
\begin{equation}\label{eq:regular}
(\sigma_n)=(g)\quad \text{and}\quad (\psi_{3,n})=(\phi_3)\in\div_\I(R_S)\quad \text{for all $n\in\n$},
\end{equation}
and
\begin{equation}\label{eq:conv1}
\{(\sigma_n,\psi_{3,n})|_S\}_{n \in \n} \to (g,\phi_3)\quad \text{in the
$\Ccal^0$ topology on $S$}.
\end{equation}

For any couple of functions $(h_1,h_2)\in \Fcal_{\hgot,\I}(\overline{W})^2$, we denote by $\Psi_n(h_1,h_2)\in\Omega_{\hgot,\I}(\overline{W})^3$ the Weierstrass data associated to the pair $(e^{\imath h_1} \sigma_n,e^{h_2}\psi_{3,n})$ by \eqref{eq:recoverX}; that is to say,
\[
\Psi_n(h_1,h_2)=\left( \frac12 \Big(\frac1{e^{\imath h_1}\sigma_n}-e^{\imath h_1}\sigma_n \Big) \,,\, \frac{\imath}2 \Big(\frac1{e^{\imath h_1}\sigma_n}+e^{\imath h_1}\sigma_n \Big)  \,,\, 1\right)e^{h_2}\psi_{3,n} .
\]

By \eqref{eq:c-d}, one has that $\int_{c_j}\Psi_n(h_1,h_2)\in\imath\r^3$ and $\int_{d_j}\Psi_n(h_1,h_2)\in\r^3$, for all $j$ and $(h_1,h_2)\in\Fcal_{\hgot,\I}(\overline{W})^2$. Denote by $\Pcal_n\colon \Fcal_{\hgot,\I}(\overline{W})^2 \to \r^{6 \nu_0+3}\equiv \r^{3\nu_0+3}\times\r^{3\nu_0}$ the period map given by
\begin{equation}\label{eq:Pn}
\Pcal_n (h_1,h_2)=
\Big( \Big[\Im\int_{c_j} \Psi_n(h_1,h_2)-\Phi\Big]_{j\in\{0,\ldots,\nu_0\}}  \,,\, \Big[\int_{d_j} \Psi_n(h_1,h_2)-\Phi \Big]_{j\in\{1,\ldots,\nu_0\}}\Big),
\end{equation}
and notice that $\Pcal_n$ is Fr\'{e}chet differentiable if we endow the real space $\Fcal_{\hgot,\I}(\overline{W})^2$ with the maximum norm.

Denote by $\Qcal_n\colon \r^{6 \nu_0+3} \to \r^{6 \nu_0+3}$ the analytical map given by
\[
\Qcal_n((x_j)_{j=1,\ldots,6 \nu_0+3})=\Pcal_n\Big(\sum_{j=1}^{6\nu_0+3} x_j H_j\Big)\quad \text{for all $n \in \n$}.
\]
Since $\{\Qcal_n\}_{n \in \n} \to \Qcal$  uniformly on compact subsets of $\r^{6\nu_0+3}$, then $\Qcal_n\colon U \to \Qcal_n(U)$ is an analytical diffeomorphism  and  $0 \in \Qcal_n(U)$ for all $n\geq n_0$ for some $n_0\in\n$. Denote by $y_n=(y_{j,n})_{j=1,\ldots,6 \nu_0+3}$ the unique point in $U$ mapped to $0$ by $\Qcal_n$, $n\geq n_0$. Since $\Qcal(0)=0$, then 
\begin{equation}\label{eq:conv2}
\text{the sequence $\{y_n\}_{n \geq n_0}$ converges to $0$}.
\end{equation}

Set
\[
g_n:=e^{\sum_{j=1}^{6 \nu_0+3} y_{j,n} h_{1,j}} \sigma_n\in \Gcal_{\I}(\overline{W})\quad \text{and}\quad \phi_{3,n}:=e^{\sum_{j=1}^{6 \nu_0+3} y_{j,n} h_{2,j}} \psi_{3,n}\in \Omega_{\hgot,\I}(\overline{W}),
\]
for all $n\geq n_0$. Denote by $\Phi_n$ the Weierstrass data on $W$ associated to the pair $(g_n,\phi_{3,n})$ by \eqref{eq:recoverX}, $n\geq n_0$, and let us check that the sequence $\{\Phi_n\}_{n \geq n_0}$ solves the lemma. Indeed, $\{\Phi_n\}_{n\geq n_0}$ converges to $\Phi$ in the $\Ccal^0$ topology on $S$ by \eqref{eq:conv1} and \eqref{eq:conv2}. Since $\Qcal_n(y_n)=0$, then $\Phi_n-\Phi$ is exact on $S$. Finally, conditions Lemma \ref{lem:weiaprox}-{\rm (i)} and {\rm (ii)} are ensured by \eqref{eq:regular}.
\end{proof}

By a minor modification of the proof of Lemma \ref{lem:weiaprox}, we can obtain the analogous approximation result for Weierstrass data with a fixed component $1$-form. This will be very useful for applications; see Sec. \ref{sec:apli}.

\begin{lemma} \label{lem:weiaprox2} In Lemma \ref{lem:weiaprox} one can choose $\phi_{3,n}=\phi_3$ for all $n \in \n,$ provided that $\phi_3$ extends holomorphically to  $W$, vanishes everywhere on no connected component of $R_S$, and vanishes nowhere on $C_S.$
\end{lemma}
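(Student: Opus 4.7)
The proof should closely mirror that of Lemma \ref{lem:weiaprox}, with the essential change that we now only perturb $g=\phi_3/(\phi_1-\imath\phi_2)$ and keep $\phi_3$ itself fixed. Under the hypotheses, $\phi_3$ extends holomorphically to $W$, is nonvanishing on $C_S$, and is not identically zero on any component of $R_S$; therefore $g\in\Gcal_{\I}(S)$. As a first step, I would carry out the analogues of Claims \ref{ass:gauss} and \ref{ass:motion} to reduce to the case where $g|_E$ is non-constant on every connected component $E$ of $R_S$ and both $\phi_j$ and $d(\phi_i/\phi_j)$ vanish nowhere on $(bR_S)\cup C_S$. These reductions proceed as in the original proof, but one perturbs only $g$ to $g+h$ (with $h\in\Ocal(\Lambda_1)$, holomorphic on the components of $R_S$ where $g$ is constant), using an implicit-function / Newton-type argument on the linearized period map to kill the finitely many periods introduced on those components; here we exploit that $\phi_3$ is fixed and nonvanishing on such components.

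The main step is to replace the two-variable period map of Lemma \ref{lem:weiaprox} by the one-variable map
\[
\Pcal\colon \Fcal_{\hgot,\I}(\overline{W})\longrightarrow \r^{4\nu_0+2},\qquad
\Pcal(h_1)=\Big(\big[\Im\textstyle\int_{c_j}(\Phi(h_1)-\Phi)_{1,2}\big]_j,\big[\textstyle\int_{d_j}(\Phi(h_1)-\Phi)_{1,2}\big]_j\Big),
\]
where $\Phi(h_1)$ denotes the Weierstrass data associated via \eqref{eq:recoverX} to the pair $(e^{\imath h_1}g,\phi_3)$, and only the first two coordinates appear because the third coordinate of $\Phi(h_1)-\Phi$ is identically zero. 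A direct calculation shows that the Fr\'echet derivative $\Acal$ of $\Pcal$ at $h_1=0$ acts on $h\in\Fcal_{\hgot,\I}(\overline W)$ through the integrands $-h\phi_2$ and $h\phi_1$ (the third coordinate vanishes).

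The core task is then to show $\Acal$ has maximal rank $4\nu_0+2$. Arguing by contradiction, suppose the image of $\Acal$ lies in a proper linear subspace; as in the original Claim \ref{ass:regular}, one produces $\c$-combinations $\Gamma_1,\Gamma_2\in\Hcal_1(S,\c)$ of $\{c_j,d_j\}$, not both zero, such that
\[
-\int_{\Gamma_1}h\phi_2+\int_{\Gamma_2}h\phi_1=0\qquad \forall h\in\Fcal_{\hgot,\I}(\overline W).
\]
Testing against $h=df/\phi_1$ with $f\in\Sigma_1=\{f\colon (f)\ge(\phi_1)^2\}$ (approximable by elements of $\Fcal_{\hgot,\I}(\overline W)$ via Lemma \ref{lem:runge}) and integrating by parts gives $\int_{\Gamma_1}f\,d(\phi_2/\phi_1)=0$; a Riemann–Roch construction of $\tau+du$ with $\int_{\Gamma_1}\tau\ne 0$ then forces $\Gamma_1=0$, exactly as in the original proof. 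Testing instead against $h=df/\phi_2$ with $(f)\ge(\phi_2)^2$ kills $\Gamma_2$ symmetrically. Unlike in Lemma \ref{lem:weiaprox} there is no third equation and hence no $\Gamma_3$ to treat, which is what makes the argument compatible with fixing $\phi_3$.

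Once the maximal rank is established, the conclusion is routine: choose a basis $\{e_1,\dots,e_{4\nu_0+2}\}$ of $\r^{4\nu_0+2}$ and $H_j\in\Acal^{-1}(e_j)$; use Lemma \ref{lem:gaprox} (applied only to $g$, since $\phi_3$ is left untouched) to produce $\sigma_n\in\Gcal_{\I}(\overline W)$ approximating $g$ on $S$ with $(\sigma_n)=(g)$; set up the approximate finite-dimensional map $\Qcal_n$ as in the proof of Lemma \ref{lem:weiaprox} and, via the inverse function theorem applied to a limit diffeomorphism $\Qcal$ at $0$, find small $y_n\to 0$ with $\Qcal_n(y_n)=0$. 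Then $g_n:=e^{\sum y_{j,n}h_j}\sigma_n$ together with the fixed $\phi_3$ define, via \eqref{eq:recoverX}, the desired $\Phi_n\in\Omega_{\hgot,\I}(W)^3$. The main obstacle, and the only place where genuine care is needed, is the Fr\'echet derivative argument above: with only one variable at our disposal, the period calculation must close despite having lost the $h_2$ direction, and this works precisely because of the hypothesis that $\phi_3$ is nonvanishing on $C_S\cup(bR_S)$ and not identically zero on any component of $R_S$.
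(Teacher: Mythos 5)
Your proposal follows the paper's strategy closely (same reductions via Claims \ref{ass:gauss} and \ref{ass:motion}, same one-variable period map $\hat\Pcal(h)=\Pcal(h,0)$ landing in $\r^{4\nu_0+2}$, same $\Gamma_1,\Gamma_2$ rank argument, same inverse-function conclusion), but there is a genuine gap in the way you feed the data into Lemma \ref{lem:gaprox}. You apply Lemma \ref{lem:gaprox} directly to $g$ and take $\sigma_n\in\Gcal_\I(\overline W)$ with $(\sigma_n)=(g)$, where $(g)$ is supported in $S^\circ$. But the hypothesis only requires $\phi_3$ to be nonvanishing on $C_S$ (and not identically zero on components of $R_S$); nothing prevents $\phi_3$ from having zeros in $W\setminus S$. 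At such a zero $P\in W\setminus S$, $\sigma_n$ is holomorphic and nonzero, so all three components of the Weierstrass data $\Phi_n$ obtained from $(\sigma_n,\phi_3)$ via \eqref{eq:recoverX} vanish at $P$. This violates condition {\rm (ii)} of Lemma \ref{lem:weiaprox} ($\sum_j|\phi_{j,n}|^2$ nowhere zero on $W$), so the resulting map is a \emph{branched} minimal immersion on $W$, which is fatal for the subsequent use of the lemma.

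The paper removes this obstruction by forcing $(\sigma_n)$ to match the divisor of $\phi_3$ outside $S$: write $(\phi_3|_{W\setminus S})=D_1\,\I(D_1)$ with disjoint supports, use Riemann--Roch to get a holomorphic $H_1$ on $\overline W$ with $(H_1)=D_1$, set $H=H_1/\overline{H_1\circ\I}$ so that $(H)=D_1\,\I(D_1)^{-1}$ and $H\circ\I=1/\overline H$, apply Lemma \ref{lem:gaprox} to $g/H\in\Gcal_\I(S)$ (which still vanishes nowhere on $S\setminus S^\circ$) to get $\rho_n$ with $(\rho_n)=(g)$, and finally set $\sigma_n:=\rho_n H$, so $(\sigma_n)=(g)\,D_1\,\I(D_1)^{-1}$. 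With this choice, the zeros and poles of $\sigma_n$ on $W\setminus S$ exactly compensate the zeros of $\phi_3$ there, so $\Phi_n$ is holomorphic with no common zeros. Your sketch of the rank argument and the final inverse-function step is fine, but without this divisor-matching construction the proof does not close. (A minor additional remark: you also lose item {\rm (i)} of Lemma \ref{lem:weiaprox} for $\phi_{3,n}=\phi_3$, since $\phi_3$ may vanish on $W\setminus R_S$; the paper implicitly drops that requirement here, and only {\rm (ii)} and {\rm (iii)} are used in Theorem \ref{th:Mergelyan}-{\rm (II)}.)
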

\begin{proof} 
Reasoning as in the proof of Claim \ref{ass:motion}, it can be assumed without loss of generality that $\phi_j$ and $d(\phi_i/\phi_j)$ vanish nowhere on $(b R_S) \cup C_S$, for all $i,j\in\{1,2,3\}$, $i\neq j$. In this case, we take $\I$-admissible sets $S_n$ as those in the proof of Claim \ref{ass:motion}, and replace the Weierstrass data $(g,\phi_3)$ on $S_n$ by suitable $(h_n,\phi_3)$ for all $n\in\n$.

As in Claim \ref{ass:regular}, one can now check that the Fr\'{e}chet derivative $\hat\Acal\colon \Fcal_{\hgot,\I}(\overline{W}) \to \r^{4\nu_0+2}$ of
the period map $\hat\Pcal\colon \Fcal_{\hgot,\I}(\overline{W}) \to \r^{4\nu_0+2}\equiv\r^{4\nu_0+2}\times\{0\}\subset\r^{6\nu_0+3}$, 
$\hat \Pcal(h):=\Pcal(h,0)$, at $h=0$ has maximal rank; where $\Pcal$ is the map \eqref{eq:periodmap}.  Then fix a basis $\{e_1,\ldots, e_{4 \nu_0+2}\}$ of $\r^{4 \nu_0+2},$ and for any $j\in\{1,\ldots,4\nu_0+2\}$ choose a function $\hat H_j \in \hat\Acal^{-1}(e_j)\subset\Fcal_{\hgot,\I}(\overline{W})$ . Denote by $\hat \Qcal\colon \r^{4 \nu_0+2} \to \r^{4 \nu_0+2}$ the analytic map $\hat \Qcal((x_j)_{j=1,\ldots,4 \nu_0+2})=\hat \Pcal(\sum_{j=1,\ldots,4 \nu_0+2} x_j \hat H_j).$

Write
\begin{equation}\label{eq:divphi3}
(\phi_3|_{W\setminus S})=D_1\I(D_1)
\end{equation}
wehre ${\rm supp}(D_1)\cap{\rm supp}(\I(D_1))=\emptyset.$ Since $W$ is relatively compact in $\Ncal$, the Riemann-Roch theorem provides a holomorphic function $H_1\colon \overline{W}\to\c$ such that $(H_1)=D_1.$ Set $H:=H_1/\overline{\I(H_1)}$, and notice that $H$ is a meromorphic function on $\overline{W}$, $(H)=D_1\I(D_1)^{-1}$, and $H\circ\I=1/\overline{H}$. Since $g$ vanishes nowhere on $S\setminus S^\circ$, then $g/H\in\Gcal_{\I}(S)$ does; hence Lemma \ref{lem:gaprox} furnishes a sequence  $\{\rho_n\}_{n \in \n} \subset \Gcal_{\I}(\overline{W})$ such that $(\rho_n)=(g)\in \div(S^\circ)$ for all $n\in\n$ and $\{\rho_n|_S\}_{n \in \n} \to g/H$ in the $\Ccal^0$ topology on $S.$ Set $\sigma_n:=\rho_nH\in \Gcal_{\I}(W)$  and notice that 
\begin{equation}\label{eq:divg}
(\sigma_n)=(g)D_1 \I(D_1)^{-1},\quad \text{for all $n\in\n$},
\end{equation}
and $\{\sigma_n|_S\}_{n \in \n} \to g$ in the $\Ccal^0$ topology on $S.$ 
Observe that \eqref{eq:divphi3} and \eqref{eq:divg} ensure that three $1$-forms of the Weierstrass data associated  by \eqref{eq:recoverX} to the pair $(\sigma_n,\phi_3)$ are holomorphic and have no common zeros.

Denote by $\hat \Pcal_n\colon \Fcal_{\hgot,\I}(\overline{W}) \to \r^{4 \nu_0+2}\equiv\r^{4\nu_0+2}\times\{0\}\subset\r^{6\nu_0+3}$ the period map given by $\hat \Pcal_n(h)=\Pcal_n(h,0)$, where $\Pcal_n$ is the map \eqref{eq:Pn}, and denote by $\hat \Qcal_n\colon \r^{4 \nu_0+2} \to \r^{4 \nu_0+2}$ the analytical map $\hat \Qcal_n((x_j)_{j=1,\ldots,4 \nu_0+2})=\hat \Pcal_n(\sum_{j=1}^{4 \nu_0+2} x_j \hat H_j)$  for all $n \in \n.$  To conclude the proof, we argue as in the  proof of Lemma \ref{lem:weiaprox}.
\end{proof}


We are now ready to state and prove the main result of this paper. Theorem \ref{th:intro} is a particular instance of the following

\begin{theorem}[Runge-Mergelyan's Theorem for non-orientable minimal surfaces]\label{th:Mergelyan}
Let $S\subset \Ncal$ be an $\I$-admissible subset (see Def.\ \ref{def:admi}), let $X_\varpi \in \Mcal_{\ggot,\I}^*(S)$ (see Def.\ \ref{def:marked}), and let $\pgot\colon \Hcal_1(\Ncal,\z)\to\r^3$ be a group homomorphism such that $\pgot(\I_*(\gamma))=-\pgot(\gamma)$ for all $\gamma\in \Hcal_1(\Ncal,\z)$, and  $\pgot|_{\Hcal_1(S,\z)}$ is the generalized flux map $\pgot_{X_\varpi}$ of $X_\varpi$. Write $X_\varpi=(X=(X_j)_{j=1,2,3},\varpi)$, $\partial X_\varpi=(\phi_j)_{j=1,2,3}$, and $\pgot=(\pgot_j)_{j=1,2,3}$.

Then the following assertions hold:

\begin{enumerate}[\rm (I)]
\item $X_\varpi$ can be approximated in the $\Ccal^1$ topology on $S$ by $\I$-invariant conformal minimal immersions $Y=(Y_j)_{j=1,2,3}\colon \Ncal\to\r^3$ such that $\pgot_Y=\pgot$ and $\partial Y_3$ vanishes nowhere on $\Ncal\setminus R_S$. Furthermore,  $Y$ can be chosen so that $(\partial Y_3|_E)=(\phi_3|_E)\in\div(E)$ for any connected component $E$ of $R_S$ such that $\phi_3$ does not vanish everywhere on $E$.
\item If $\phi_3$ is not identically zero and extends to $\Ncal$ as a holomorphic $1$-form without real periods, vanishing nowhere on $C_S$, and satisfying $\pgot_3(\gamma)=\Im\int_\gamma \phi_3$ for all $\gamma\in\Hcal_1(\Ncal,\z)$, then $X_\varpi$ can be approximated in the $\Ccal^1$ topology on $S$ by $\I$-invariant conformal minimal immersions $Y=(Y_j)_{j=1,2,3}\colon \Ncal\to\r^3$ with flux map $\pgot_Y=\pgot$ and third coordinate function $Y_3=X_3$.
\end{enumerate}
\end{theorem}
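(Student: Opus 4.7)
The strategy is a recursive exhaustion argument, with Lemma~\ref{lem:weiaprox} (and Lemma~\ref{lem:weiaprox2} for part (II)) providing the one-step approximation. I would fix an $\I$-invariant Runge exhaustion $S\subset W_1\Subset W_2\Subset\cdots$ of $\Ncal$ by relatively compact $\I$-invariant domains of finite topology, and build inductively a sequence $Y_n\in\Mcal_\I(\overline{W_n})$ with $\pgot_{Y_n}=\pgot|_{\Hcal_1(W_n,\z)}$, with $Y_1$ close to $X_\varpi$ on $S$ in the $\Ccal^1$ topology, and with $\|Y_n-Y_{n-1}\|_{1,\overline{W_{n-1}}}<2^{-n}\epsilon$ at each step. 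Summable errors give $\Ccal^1$-convergence on compact subsets of $\Ncal$ to a limit $Y\in\Mcal_\I(\Ncal)$ that approximates $X_\varpi$ on $S$ and realizes the flux $\pgot$; conformality and the nowhere-degeneracy of $\partial Y$ survive in the limit because $\Ccal^1$-approximation controls $\partial Y$.

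To pass from $Y_{n-1}$ to $Y_n$ I would first thicken $\overline{W_{n-1}}$ to an $\I$-admissible set $S_n\subset W_n$ by attaching finitely many pairwise disjoint $\I$-invariant pairs $\{\alpha,\I(\alpha)\}$ of analytic Jordan arcs inside $W_n\setminus W_{n-1}^\circ$, chosen so that $i_*\colon \Hcal_1(S_n,\z)\to\Hcal_1(W_n,\z)$ becomes an isomorphism; the absence of fixed points of $\I$ guarantees that such $\I$-pairs of arcs can always be placed. Next, I would extend $Y_{n-1}|_{\overline{W_{n-1}}}$ to a marked generalized immersion $\hat Y_{n-1,\hat\varpi}\in\Mcal_{\ggot,\I}^*(S_n)$ by specifying, along each new arc $\alpha$, a regular smooth curve $\hat Y_{n-1}\colon\alpha\to\r^3$ together with an orientable unit normal field $\hat\varpi|_\alpha$, transporting both to $\I(\alpha)$ via $\hat Y_{n-1}\circ\I=\hat Y_{n-1}$ and $\hat\varpi\circ\I=-\hat\varpi$. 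The freedom in these arc extensions is used to arrange that the generalized flux $\pgot_{\hat Y_{n-1,\hat\varpi}}$ agrees with $\pgot$ on the new homology classes; the antisymmetry $\pgot(\I_*\gamma)=-\pgot(\gamma)$ makes the simultaneous prescription on $\I$-pairs self-consistent, and the prescription itself is a finite-dimensional interpolation problem along each arc, easily solved by adjusting the parametrization.

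With $\hat Y_{n-1,\hat\varpi}$ in hand, Lemma~\ref{lem:weiaprox} applied to $\Phi=\partial\hat Y_{n-1,\hat\varpi}\in\Omega_{\hgot,\I}(S_n)^3$ on the $\I$-invariant domain $W_n$ produces $\Phi_n\in\Omega_{\hgot,\I}(W_n)^3$, conformal and nowhere degenerate on $W_n$, approximating $\Phi$ in the $\Ccal^0$ topology on $S_n$, with $\Phi_n-\Phi$ exact on $S_n$ (item (iii)) and with the divisor control of item (i). Exactness on $S_n$, combined with the isomorphism $i_*\colon\Hcal_1(S_n,\z)\to\Hcal_1(W_n,\z)$ and Remark~\ref{re:periodos}, forces $Y_n:=\Re\int\Phi_n$ (with base point adjusted to match $Y_{n-1}$) to be single-valued on $W_n$ with flux map equal to $\pgot|_{\Hcal_1(W_n,\z)}$; the $\Ccal^0$ estimate on $\Phi_n-\Phi$ yields the $\Ccal^1$ estimate on $\overline{W_{n-1}}$. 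Part (II) is handled identically using Lemma~\ref{lem:weiaprox2} in place of Lemma~\ref{lem:weiaprox}: since $\phi_3$ is prescribed globally with the correct third-component flux by hypothesis, the arc extensions only concern the first two coordinates, no adjustment of $\phi_3$ is ever made, and $Y_3=X_3$ is preserved because $\partial Y_{n,3}=\phi_3$ for every $n$.

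The main obstacle is the second substep of the induction: constructing the $\I$-equivariant arc extensions of the immersion together with an orientable smooth normal field $\hat\varpi$ satisfying both $\hat\varpi\circ\I=-\hat\varpi$ and the orientability condition of Definition~\ref{def:marked} across each junction $\alpha\cap\overline{W_{n-1}}$, while simultaneously matching the prescribed values of $\pgot$ on both the antisymmetric cycles $c_j$ and the symmetric cycles $d_j$ of an $\I$-basis. This is where non-orientability genuinely complicates the orientable scheme of~\cite{AL-proper}, and the compatibility between \eqref{eq:flux-non} and \eqref{eq:con-non} on $\I$-paired arcs must be tracked carefully; once this is arranged, everything else reduces to a standard application of Lemma~\ref{lem:weiaprox} and a diagonal-type convergence argument.
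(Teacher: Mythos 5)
Your proposal follows essentially the same strategy as the paper: reduce to a recursive exhaustion by $\I$-invariant compact regions, thicken at each step to an $\I$-admissible set by attaching $\I$-paired arcs $\{\gamma,\I(\gamma)\}$, extend the data (with flux control) across the new arcs, and apply Lemma~\ref{lem:weiaprox} (resp.~Lemma~\ref{lem:weiaprox2}) before passing to the limit. The ``main obstacle'' you flag is not a real gap: since the new arcs can always be chosen with $\gamma\cap\I(\gamma)=\emptyset$ (cf.\ Remark~\ref{rem:topology}), the curve and orientable conormal field are prescribed freely on $\gamma$ as in the orientable case and then transported to $\I(\gamma)$ by $\I$-invariance, so no compatibility constraint remains beyond the usual junction matching and the finite-dimensional period interpolation, exactly as in the paper.
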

\begin{proof}
We begin with the following assertion.
\begin{claim}\label{cla:con}
There exists a connected $\I$-admissible subset  $\hat S\subset\Ncal$ such that $R_{\hat S}=R_S$ and $C_{\hat S}\supset C_S$; that is to say, $\hat S$ is constructed by adding a finite family of Jordan arcs to $S$. 
\end{claim}
\begin{proof} If $S$ is connected choose $\hat{S}=S.$ 

Assume that $S$ is not connected. We distinguish the following two cases. (See Remark \ref{rem:inicio} and Def.\ \ref{def:non} and \ref{def:saturated} for notation.)

\noindent {\it Case $1$.} $\sub{S}$ is a connected subset of the non-orientable Riemann surface $\sub{\Ncal}.$ In this situation, any tubular neighborhood of $\sub S$ is an orientable surface. Then,  take any Jordan arc $\sub{\gamma}\subset \sub{\Ncal}$ with end points in $b R_{\sub{S}}$ and otherwise disjoint from $\sub S,$ such that any tubular neighborhood of $\sub{\hat{S}}\colon =\sub S \cup \sub \gamma$ is non-orientable. Since 
$\sub \Ncal \setminus \sub{\hat{S}}$ has no relatively compact connected components, then $\hat{S}$ meets the requirements of the claim.

\noindent {\it Case $2$.} $\sub{S}$ is not connected. Then, consider a finite family $\sub \Gamma$ of pairwise disjoint Jordan arcs in $\sub \Ncal$ such that
\begin{itemize}
\item $\sub{\gamma}$ has end points in $\sub{b R_S}$ and is otherwise disjoint from $\sub S,$ for all $\sub \gamma \in \sub \Gamma,$
\item $\sub S':=\sub S \cup \sub \Gamma$ is connected, and
\item $\sub \Ncal \setminus \sub S' $ has no relatively compact connected components.
\end{itemize}
This reduces the proof of the claim to Case $1$ applied to $S'$.
\end{proof}

Let $\hat S$ be as in Claim \ref{cla:con}.

Let us prove assertion {\rm (I)}. 

Fix $\epsilon>0$. Fix $P_0\in S$ and let $\Phi_0=(\phi_{0,j})_{j=1,2,3}$ be any extension of $\partial X_\varpi$ to $\hat S$ such that 
\begin{enumerate}[\rm (a)]
\item $\phi_{0,j}\in \Omega_{\hgot,\I}(\hat S)$ and is smooth for all $j=1,2,3$,
\item $\Phi_0$ has no real periods, $\sum_{j=1}^3\phi_{0,j}^2=0$, and $\sum_{j=1}^3|\phi_{0,j}|^2$ vanishes nowhere on $\hat S$,
\item $X(P)=X(P_0)+\Re\int_{P_0}^P\Phi_0$ for all $P\in S$, and
\item $\Im\int_\gamma \Phi_0=\pgot(\gamma)$ for all $\gamma\in \Hcal_1(\hat S,\z)$.
\end{enumerate}
To construct $\Phi_0$, just define $\Phi_0$ on the arcs ${C_{\hat S}\setminus C_S}$ in a suitable way. 
Denote by $F_0\in\Mcal_{\ggot,\I}^*(M_0)$ the marked immersion with generalized Weierstrass data $\Phi_0$ and $F_0(P_0)=X_\varpi(P_0)$. 

Set $M_0:=\hat S$ and $M_1$ a tubular neighborhood of $M_0$.  Let $\{M_n\}_{n\in\n}$ be an exhaustion of $\Ncal$ by Runge connected $\I$-invariant compact regions such that the Euler characteristic $\chi(M_n^\circ\setminus M_{n-1})\in\{0,-2\}$ for all $n\geq 2$. Existence of such an exhaustion is well known. Furthermore, $\{M_n\}_{n\in\n}$ meets the following topological description:
\begin{remark}\label{rem:topology}
The region $M_n$ is obtained from $M_{n-1}$, $n\geq 2$, by one of the following four procedures:
\begin{enumerate}[\rm 1.]
\item $M_n$ is a tubular neighborhood of $M_{n-1}$. In this case $\chi(M_n^\circ\setminus M_{n-1})=0$.

\item $M_n$ is a tubular neighborhood of $M_{n-1}\cup \gamma\cup\I(\gamma)$, where $\gamma$ is a Jordan arc in $\Ncal$ with endpoints in a connected component $c$ of $b M_{n-1}$ and otherwise disjoint from $M_{n-1}$, such that $\gamma\cap\I(\gamma)=\emptyset$ and $M_{n-1}\cup \gamma\cup\I(\gamma)$ is an $\I$-admissible subset in $\Ncal$. In this case, $M_n$ has the same genus as $M_{n-1}$ and two more boundary components; hence $\chi(M_n^\circ\setminus M_{n-1})=-2$. (See Fig.\ \ref{fig:topo1}.)
\begin{figure}[ht]
    \begin{center}
    \scalebox{0.50}{\includegraphics{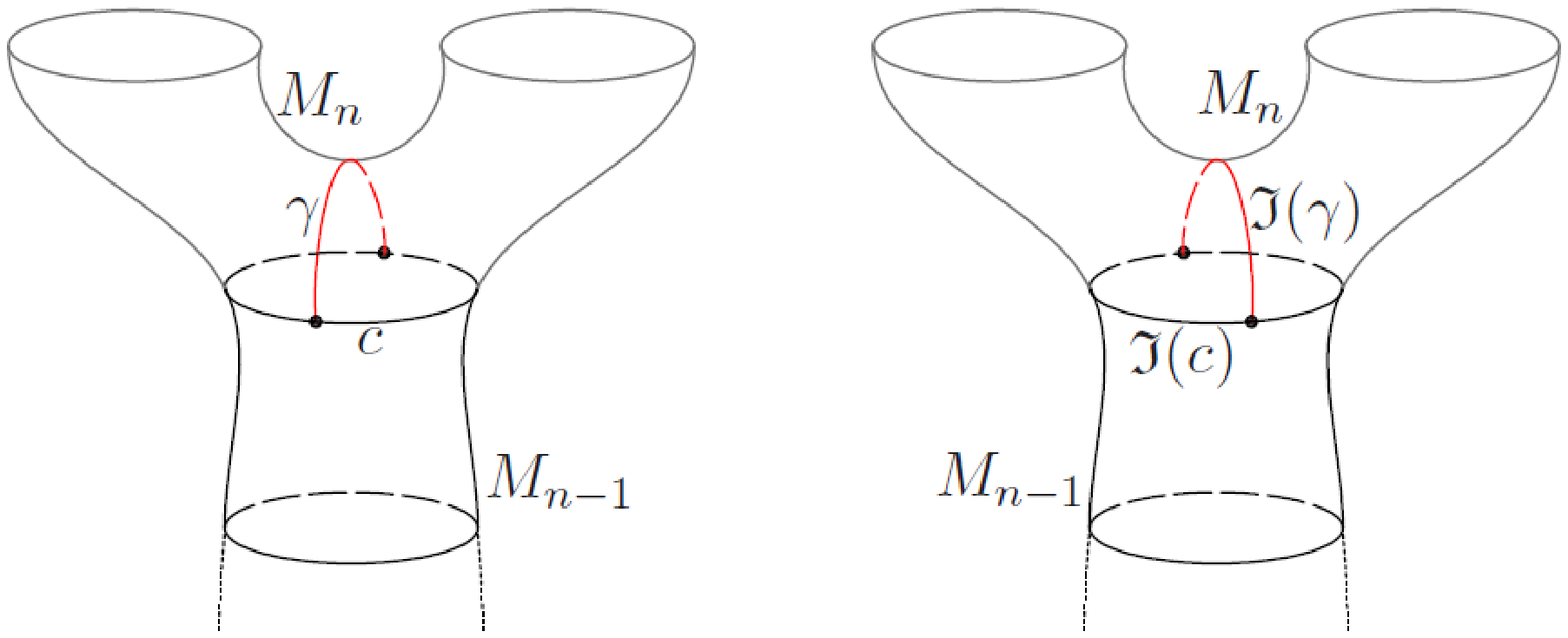}}
        \end{center}
        \vspace{-0.25cm}
\caption{$M_n$; procedure 2.}\label{fig:topo1}
\end{figure}

\item $M_n$ is a tubular neighborhood of $M_{n-1}\cup \gamma\cup\I(\gamma)$, where $\gamma$ is a Jordan arc in $\Ncal$ with an endpoint in a connected component $c$ of $b M_{n-1}$, the other endpoint in $\I(c)$, and otherwise disjoint from $M_{n-1}$, such that $\gamma\cap\I(\gamma)=\emptyset$ and $M_{n-1}\cup \gamma\cup\I(\gamma)$ is an $\I$-admissible subset in $\Ncal$. In this case, $M_n$ has the same number of boundary components as $M_{n-1}$ and one more handle; hence $\chi(M_n^\circ\setminus M_{n-1})=-2$. (See Fig.\ \ref{fig:topo2}.)
\begin{figure}[ht]
    \begin{center}
    \scalebox{0.50}{\includegraphics{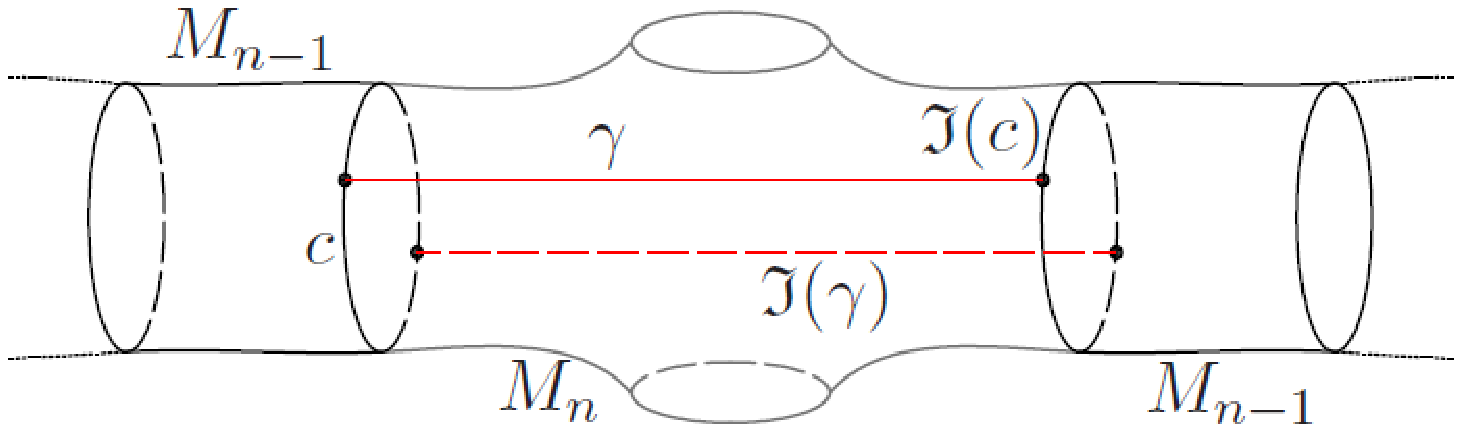}}
        \end{center}
        \vspace{-0.25cm}
\caption{$M_n$; procedure 3.}\label{fig:topo2}
\end{figure}

\item $M_n$ is a tubular neighborhood of $M_{n-1}\cup \gamma\cup\I(\gamma)$, where $\gamma$ is a Jordan arc in $\Ncal$ with an endpoint in a connected component $c_1$ of $b M_{n-1}$, the other endpoint in a connected component $c_2\neq c_1$ of $b M_{n-1}$, and otherwise disjoint from $M_{n-1}$, such that $c_2\cap \I(c_1)=\emptyset$, $\gamma\cap\I(\gamma)=\emptyset$, and $M_{n-1}\cup \gamma\cup\I(\gamma)$ is an $\I$-admissible subset in $\Ncal$. In this case, $M_n$ has two less boundary components than $M_{n-1}$ and two more handles; hence $\chi(M_n^\circ\setminus M_{n-1})=-2$. (See Fig.\ \ref{fig:topo3}.)
\begin{figure}[ht]
    \begin{center}
    \scalebox{0.50}{\includegraphics{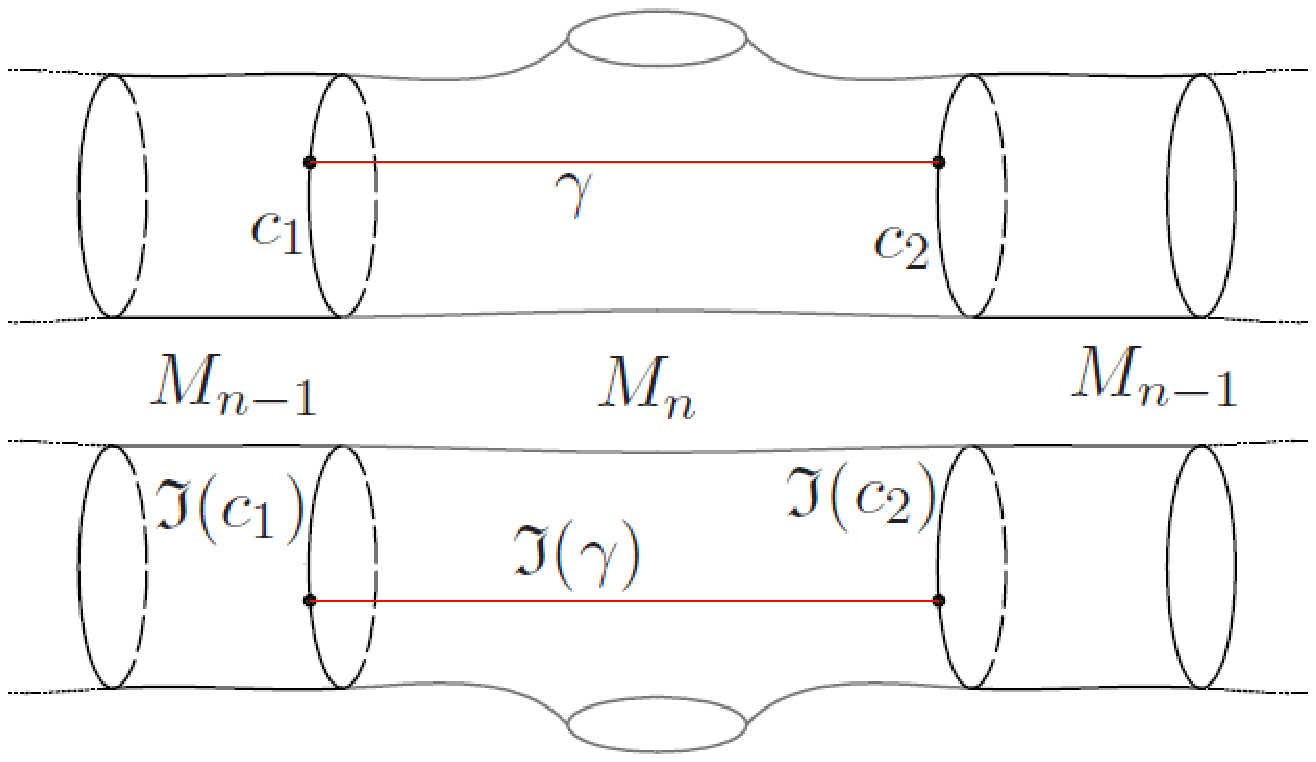}}
        \end{center}
        \vspace{-0.25cm}
\caption{$M_n$; procedure 4.}\label{fig:topo3}
\end{figure}
\end{enumerate}
\end{remark}

Let $0<\xi<\epsilon$ to be specified later and let us construct a sequence $\{F_n=(F_{n,j})_{j=1,2,3}\}_{n\in\n}$, where $F_n\in\Mcal_\I(M_n)$, such that
\begin{enumerate}[\rm (A{$_n$})]
\item $\|F_n-F_{n-1}\|_{1,M_{n-1}}<\xi/2^{n+1}$,
\item  $\partial F_{n,3}$ vanishes nowhere on $M_n\setminus S$, $(\partial F_{n,3}|_E)=(\phi_3|_E)\in\div_\I(R_S)$  for any connected component $E$ of $R_S$ such that $\phi_3$ does not vanish everywhere on $E$, and
\item $\pgot_{F_n}=\pgot|_{\Hcal_1(M_n,\z)}$ for all $n\in\n$.
\end{enumerate}

We follow a recursive process. Set $F_1:=F_{0}(P_0)+\Re \int_{P_0} \Phi_1$, where $\Phi_1\in\Omega_{\hgot,\I}(M_1)^3$ is a triple resulting to apply Lemma \ref{lem:weiaprox} to the data
\[
S=M_{0},\enskip \text{$W$ a tubular neighborhood of $M_1$},\enskip \Phi=\partial F_{0}=\Phi_0,
\]
close enough to $\partial F_{0}$ in the $\Ccal^0$ topology on $M_{0}$ to insure {\rm (A$_1$)}. Recall that $M_1$ is a tubular neighborhood of $M_0$, hence $\Phi_1$ has no real periods and $F_1$ is well defined. Properties {\rm (B$_1$)} and {\rm (C$_1$)} follow from {\rm (a)}--{\rm (d)}.

Let $n\geq 2$, assume that we have constructed $F_1,\ldots, F_{n-1}$, and let us furnish $F_n$. We distinguish the following two cases:

\noindent {\it Case $1$.} Assume that $\chi(M_n^\circ\setminus M_{n-1})=0$; hence $M_n^\circ\setminus M_{n-1}$ consists of a finite family of pairwise disjoint open annuli and $\Hcal_1(M_n,\z)=\Hcal_1(M_{n-1},\z)$; see Remark \ref{rem:topology}-1. In this case we take $F_n:=F_{n-1}(P_0)+\Re \int_{P_0} \Phi_n$, where $\Phi_n\in\Omega_{\hgot,\I}(M_n)^3$ is a triple resulting to apply Lemma \ref{lem:weiaprox} to the data
\[
S=M_{n-1},\enskip \text{$W$ a tubular neighborhood of $M_n$},\enskip \Phi=\partial F_{n-1},
\]
close enough to $\partial F_{n-1}$ in the $\Ccal^0$ topology on $M_{n-1}$ to ensure {\rm (A$_n$)}. Since $M_n$ is a tubular neighborhood of $M_{n-1}$, then $\Phi_n$ has no real periods and $F_n$ is well defined. Properties   {\rm (B$_n$)} and  {\rm (C$_n$)} follow straightforwardly from  {\rm (B$_{n-1}$)}, {\rm (C$_{n-1}$)}, and Lemma \ref{lem:weiaprox}.

\noindent {\it Case $2$.} Assume that $\chi(M_n^\circ\setminus M_{n-1})=-2$. In this case, there exists an analytical Jordan arc $\gamma\subset M_n^\circ\setminus M_{n-1}^\circ$, attached to $b M_{n-1}$ at its endpoints and otherwise disjoint to $M_{n-1}$, such that $\gamma\cap\I(\gamma)=\emptyset$, $\tilde S:=M_{n-1}\cup\gamma\cup\I(\gamma)$ is an $\I$-admissible set in $\Ncal$, and $\chi(M_n^\circ \setminus \tilde S)=0$; see Remark \ref{rem:topology}-2,3,4. Extend $F_{n-1}$ to a generalized marked immersion $\tilde F\in\Mcal_{\ggot,\I}^*(\tilde S)$ such that $\pgot_{\tilde F}=\pgot|_{\Hcal_1(\tilde S,\z)}$. Up to approximating $\tilde{F}$ by a minimal immersion in $\Mcal_\I(\tilde M_{n-1})$  via Lemma \ref{lem:weiaprox}, where $\tilde M_{n-1}\subset M_n^\circ$ is a tubular neighborhood of $\tilde{S}$,  one can reduce the proof to the previous case. 

This concludes the construction of the sequence $\{F_n\}_{n\in\n}$.

By properties {\rm (A$_{n}$)}, $n\in \n$, the sequence $\{F_n\}_{n\in\n}$ converges in the $\Ccal^1$ topology on compact sets of $\Ncal$ to an $\I$-invariant conformal harmonic map $Y=(Y_j)_{j=1,2,3}\colon \Ncal\to\r^3$ such that $\|Y-X_\varpi\|_{1,S}<\xi<\epsilon$; take also {\rm (c)} into account. From {\rm (b)}, {\rm (B$_{n}$)}, $n\in \n$, and Hurwitz's theorem, it follows that $\partial Y_3$ vanishes nowhere on $\Ncal\setminus S$ and 
 $(\partial Y_{3}|_E)=(\phi_3|_E)\in\div_\I(R_S)$  for any connected component $E$ of $R_S$ such that $\phi_3$ does not vanish everywhere on $E$. On the other hand, if $\xi$ is taken small enough from the beginning, then $Y$ is a conformal minimal immersion; indeed, it has branch points neither on $S$ (since $\partial Y$ is close to $\partial X_\varpi$ on $S$) nor on $\Ncal\setminus S$ (since $\partial Y_3$ vanishes nowhere on $\Ncal\setminus S$). Finally, {\rm (d)} and {\rm (C$_{n}$)}, $n\in \n$, give $\pgot_Y=\pgot$. This proves statement {\rm (I)}.

In order to prove statement {\rm (II)} we reason analogously but using Lemma \ref{lem:weiaprox2} instead of Lemma \ref{lem:weiaprox}.
\end{proof}

Notice that Theorem \ref{th:Mergelyan} is a general existence result of non-orientable minimal surfaces in $\r^3$ with arbitrary conformal structure. In fact, in the next section we construct such surfaces with additional geometrical properties; see Theorems \ref{th:proper} and \ref{th:coordenada}.

%
%
%


\section{Applications}\label{sec:apli}

We conclude the paper with some applications of the results in the previous section. In Subsec.\ \ref{subsec:RM} we will derive approximation theorems of Runge-Mergelyan's type for other objects than non-orientable minimal surfaces; see Corollary \ref{co:null} and Theorem \ref{th:harmonic}. In Subsec.\ \ref{subsec:GN} we will prove an existence theorem of Gunning-Narasimhan's type on non-orientable Riemann surfaces (see Theorem \ref{th:GN}). Finally, in Subsec.\ \ref{subsec:proper} and \ref{subsec:coordenada} we show general existence results of non-orientable minimal surfaces in $\r^3$ with given underlying conformal structure and additional topological or geometric properties.

\subsection{Some Runge-Mergelyan's type results}\label{subsec:RM}

A holomorphic immersion $(F_j)_{j=1,2,3} \colon \Ncal\to\c^3$ is said to be a {\em null curve} if $\sum_{j=1}^3 (dF_j)^2$ vanishes everywhere on the open Riemann surface $\Ncal$. Minimal surfaces in $\r^3$ are locally the real part of null curves in $\c^3$. (See \cite{Osserman-book} for a good reference.)

We can now derive the analogous result to Theorem \ref{th:Mergelyan} for {\em $\I$-symmetric} null curves.
\begin{corollary}[Runge-Mergelyan's Theorem for $\I$-symmetric null curves in $\c^3$]\label{co:null}
Let $S\subset\Ncal$ be an $\I$-admissible subset and let $F=(F_j)_{j=1,2,3}\colon S\to\c^3$ be a smooth function in $\Fcal_{\hgot,\I}(S)^3$ such that $\sum_{j=1}^3 (dF_j)^2$ vanishes everywhere on $S$ and $\sum_{j=1}^3 |dF_j|^2$ vanishes nowhere on $S$.

The following assertions hold:
\begin{itemize}
\item $F$ can be uniformly approximated in the $\Ccal^1$ topology on $S$ by null curves $H=(H_j)_{j=1,2,3}\colon \Ncal\to\c^3$ in $\Fcal_{\hgot,\I}(\Ncal)^3$ such that $dH_3$ vanishes nowhere on $\Ncal\setminus R_S$.
Furthermore,  $H$ can be chosen so that $(d H_3|_E)=(dF_3|_E)\in\div(E)$ for any connected component $E$ of $R_S$ such that $dF_3$ does not vanish everywhere on $E$.

\item If $F_3$ is non-constant and extends to $\Ncal$ as a holomorphic function whose differential vanishes nowhere on $C_S$, then $F$ can be approximated in the $\Ccal^1$ topology on $S$ by  null curves $H=(H_j)_{j=1,2,3}\colon \Ncal\to\c^3$ in $\Fcal_{\hgot,\I}(\Ncal)^3$, where $H_3=F_3$.
\end{itemize}
\end{corollary}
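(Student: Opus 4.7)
The plan is to reduce Corollary \ref{co:null} to Theorem \ref{th:Mergelyan} by exploiting the bijection between $\I$-symmetric holomorphic null curves $H\colon\Ncal\to\c^3$ (i.e., satisfying $H\circ\I=\overline{H}$) and $\I$-invariant conformal minimal immersions $X=\Re H\colon\Ncal\to\r^3$ with vanishing flux. From the initial data $F=(F_j)_{j=1,2,3}\colon S\to\c^3$ I would extract a marked immersion $X_\varpi\in\Mcal_{\ggot,\I}^*(S)$ by setting $X:=\Re F$ and taking the normal field $\varpi$ along $C_S$ to be determined by $\Im F$, so that the generalized Weierstrass data $\partial X_\varpi$ is compatible with $dF$ on $S$. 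The hypotheses $\sum(dF_j)^2=0$ and $\sum|dF_j|^2>0$ on $S$ guarantee that $X|_{R_S}\in\Mcal_\I(R_S)$ and $X|_{C_S}$ is regular, while the symmetry $F\circ\I=\overline{F}$ translates to $X\circ\I=X$ and $\varpi\circ\I=-\varpi$. Since $F$ is single-valued on $S$, the form $dF$ is exact on $S$, so $\pgot_{X_\varpi}\equiv 0$ on $\Hcal_1(S,\z)$, and the zero homomorphism $\pgot\equiv 0\colon\Hcal_1(\Ncal,\z)\to\r^3$ is a valid extension satisfying $\pgot(\I_*\gamma)=-\pgot(\gamma)$.

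Applying Theorem \ref{th:Mergelyan}(I) to $X_\varpi$ with flux $\pgot\equiv 0$ yields a sequence $\{Y_n\}_{n\in\n}$ of $\I$-invariant conformal minimal immersions $\Ncal\to\r^3$ approximating $X_\varpi$ in the $\Ccal^1$ topology on $S$, with $\pgot_{Y_n}=0$, with $\partial Y_{n,3}$ vanishing nowhere on $\Ncal\setminus R_S$, and with the prescribed divisor on the relevant components of $R_S$. The combination of $\pgot_{Y_n}=0$ with $Y_n$ being single-valued as an $\r^3$-map forces $\int_\gamma\partial Y_n=0$ for every $\gamma\in\Hcal_1(\Ncal,\z)$, so $\partial Y_n$ is exact on $\Ncal$. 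Fixing $P_0\in S$ I would define $H_n\colon\Ncal\to\c^3$ as an antiderivative of $2\partial Y_n$ starting from $P_0$, plus a constant $c_n\in\c^3$ to be determined; the factor $2$ ensures $dH_n=2\partial Y_n$ limits to $dF$ on $S$ via the identity $dF=2\partial X$ for a holomorphic $F$ with real part $X$. Conformality of $Y_n$ gives $\sum(dH_{n,j})^2=0$, so $H_n$ is a holomorphic null immersion, and the prescribed properties of $dH_{n,3}=2\partial Y_{n,3}$ transfer directly from those of $\partial Y_{n,3}$.

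The delicate step is the choice of $c_n$. From $\I$-invariance of $Y_n$ one has $\I^*\partial Y_n=\overline{\partial Y_n}$, hence $d(H_n\circ\I-\overline{H_n})=0$, and the constant vector $k_n:=H_n\circ\I-\overline{H_n}\in\c^3$ satisfies $k_n+\overline{k_n}=0$ (apply $\I$ once more and use $\I^2=\mathrm{Id}$), so $k_n\in\imath\r^3$. I would then adjust $\Im c_n$ to cancel $k_n$, obtaining $H_n\circ\I=\overline{H_n}$, and fix $\Re c_n$ so that $H_n(P_0)\to F(P_0)$; compatibility of the two requirements in the limit follows from $2\int_{P_0}^{\I(P_0)}\partial Y_n\to\int_{P_0}^{\I(P_0)}dF=\overline{F(P_0)}-F(P_0)\in\imath\r^3$ (the real part of the limit vanishes because $X$ is $\I$-invariant). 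The $\Ccal^1$ convergence $H_n\to F$ on $S$ then follows from $dH_n\to dF$ on $S$ and $H_n(P_0)\to F(P_0)$. For the second assertion the same argument applies using Theorem \ref{th:Mergelyan}(II) with $\phi_3:=\partial X_3$, whose hypotheses are immediate since $F_3$ is holomorphic on $\Ncal$, its differential vanishes nowhere on $C_S$, and $\phi_3$ is exact; the resulting $Y_n$ satisfies $Y_{n,3}=X_3$ on $\Ncal$, so the integration and a matching choice of the third component of $c_n$ yield $H_{n,3}=F_3$ identically. The main obstacle is the constant-adjustment in this last paragraph, which is a finite-dimensional but delicate bookkeeping needed to ensure $\I$-symmetry, $\Ccal^0$-convergence at $P_0$, and the exact prescription $H_{n,3}=F_3$ all hold simultaneously.
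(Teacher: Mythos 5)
Your proposal takes essentially the same route as the paper: extract the marked immersion $X_\varpi$ with $X=\Re F$ and $\varpi$ read off from $\Im(dF)$, apply Theorem~\ref{th:Mergelyan} with the vanishing flux homomorphism $\pgot\equiv 0$, and recover the null curves by primitivation of the exact form $\partial Y_n$. Two remarks. First, you are actually \emph{more} careful than the printed proof on the normalization constant: the paper simply sets $H_n:=F(P_0)+\int_{P_0}\partial Y_n$, which is $\I$-symmetric only up to the small purely imaginary constant $k_n=\int_{P_0}^{\I(P_0)}(dF-\partial Y_n)$; your explicit adjustment of $\Im c_n$ makes the $H_n$ exactly members of $\Fcal_{\hgot,\I}(\Ncal)^3$, as the statement demands, and your verification that the forced $\Im c_n$ tends to $\Im F(P_0)$ and is compatible with the prescription $H_{n,3}=F_3$ in part~(II) is correct. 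Second, you omit the preliminary reduction to $S$ connected (the paper invokes Claim~\ref{cla:con} for this), and your argument genuinely needs it: the convergence $\int_{P_0}^{\I(P_0)}\partial Y_n\to\int_{P_0}^{\I(P_0)}dF$, and more generally the $\Ccal^0$ convergence of $\Im H_n$ to $\Im F$ on $S$, require that every point of $S$ be joined to $P_0$ by a path \emph{inside} $S$ -- outside $S$ one only controls $\Re\int\partial Y_n=Y_n-Y_n(P_0)$, not the imaginary part, so on components of $S$ not containing $P_0$ the offset of $\Im F$ could not be matched. So you should first replace $S$ by a connected $\I$-admissible $\hat S\supset S$ and extend $F$ suitably to $\hat S$. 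Finally, your factor of $2$ in $dH_n=2\partial Y_n$ reflects the standard operator $\partial=\frac{\partial}{\partial z}dz$ defined in Section~\ref{sec:riemann}; the paper's formula $X=\Re\int\partial X$ and its expression for $\partial X_\varpi$ along arcs tacitly use the full-derivative convention $2\partial$, so this is a notational inconsistency in the paper rather than a discrepancy you introduced.
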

\begin{proof} Up to suitably extending $F$ to a connected $\I$-admissible subset of $\Ncal$ containing $S$ (see Claim \ref{cla:con}), we assume without loss of generality that $S$ is connected.
 
By Theorem \ref{th:Mergelyan}, there exists a sequence $\{Y_n=(Y_{n,j})_{j=1,2,3}\}_{n\in \n} \subset \Mcal_\I(\Ncal)$ with $\pgot_{Y_n}=0$ for all $n$, approximating $X_\varpi\equiv (X,\varpi)=(\Re(F), \Im(dF))$ in the $\Ccal^1$ topology on $S$, and whose third coordinates $Y_{n,3}$, $n\in \n$, satisfy the required properties. If we fix $P_0\in S$, the sequence of null curves $\{F(P_0)+\int_{P_0} \partial Y_n\}_{n\in \n}$ on $\Ncal$ proves the Corollary.
\end{proof}

We next point out that Theorem \ref{th:Mergelyan} is also valid for marked harmonic functions in the following sense:
\begin{definition}\label{def:markedh}
Let $S\subset\Ncal$ be an $\I$-admissible subset. By a marked $\I$-invariant harmonic function on $S$ we mean a couple $h_\theta\equiv(h,\theta)$, where $h\colon S\to\r^3$ is an $\I$-invariant $\Ccal^1$ function, harmonic on $R_S$, and $\theta\in\Omega_{\hgot,\I}(S)$ is a $1$-form such that $\theta|_{R_S}$ equals the complex derivative $\partial (h|_{R_S})$ of $h|_{R_S}$, $\theta$ has no real periods, and $h=\Re\int^P \theta$. 

If $h_\theta$ is a marked $\I$-invariant harmonic function we denote by $\partial h_\theta=\theta$. Analogously to Def.\ \ref{def:C1-top}, the space of marked $\I$-invariant harmonic functions on $S$ is endowed with a natural $\Ccal^1$ topology.
\end{definition}

If $\theta \in \Omega_{\hgot,\I}(S)$, $\gamma \in \Hcal(S,\z)$, and $\I_*(\gamma)=\gamma$, then $\int_\gamma \theta \in \r$. Likewise, if $\I_*(\gamma)=-\gamma$ then $\int_\gamma \theta \in \imath\r$. In particular, $\theta$ has no real periods if and only if 
$\int_\gamma \theta =0$ for all $\gamma \in \Hcal(S,\z)$ with $\I_*(\gamma)=\gamma$, and in this case $\int_{\I_*(\gamma)} \theta=-\int_\gamma \theta\in \imath \r$ for all  $\gamma \in \Hcal(S,\z)$.

\begin{theorem}[Runge-Mergelyan's Theorem for harmonic functions of non-orientable Riemann surfaces]\label{th:harmonic}
Let $S\subset\Ncal$ be an $\I$-admissible subset. Let $h_\theta$ be a marked  $\I$-invariant harmonic function on $S$. Let $\pgot\colon \Hcal_1(\Ncal,\z)\to\r$ be a group homomorphism such that $\pgot(\I_*(\gamma))=-\pgot(\gamma)$ for all $\gamma\in \Hcal_1(\Ncal,\z)$, and  $\pgot(\gamma)=\Im\int_\gamma \partial h_\theta$ for all $\gamma\in\Hcal_1(S,\z)$.

Then $h_\theta$ can be approximated in the $\Ccal^1$ topology on $S$ by $\I$-invariant harmonic functions $\hat h$ on $\Ncal$, satisfying that $\partial \hat h$ vanishes nowhere on $\Ncal\setminus R_S$ and $\pgot(\gamma)=\Im\int_\gamma \partial \hat h$ for all $\gamma\in\Hcal_1(\Ncal,\z)$. Furthermore, $\hat h$ can be chosen so that $(\partial\hat h|_E)=(\partial h_\theta|_E)\in\div_\I(E)$ for any connected component $E$ of $R_S$ such that   $h|_E$ is non-constant.
\end{theorem}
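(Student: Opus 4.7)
The theorem reduces to an approximation result for the holomorphic $1$-form $\theta=\partial h_\theta\in\Omega_{\hgot,\I}(S)$: I seek to approximate $\theta$ in the $\Ccal^0$ topology on $S$ by $1$-forms $\hat\theta\in\Omega_{\hgot,\I}(\Ncal)$ with no real periods, satisfying $\Im\int_\gamma\hat\theta=\pgot(\gamma)$ for all $\gamma\in\Hcal_1(\Ncal,\z)$, vanishing nowhere on $\Ncal\setminus R_S$, and with $(\hat\theta|_E)=(\theta|_E)$ for every connected component $E$ of $R_S$ on which $h$ is non-constant; the desired function is then $\hat h(P):=h(P_0)+\Re\int_{P_0}^P\hat\theta$, where $P_0\in S$ is any fixed base point. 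The symmetry $\I^*\hat\theta=\overline{\hat\theta}$ forces $\hat h\circ\I=\hat h$, and the no-real-periods condition makes $\hat h$ well defined. As in Claim \ref{cla:con}, I may first extend $S$ to a connected $\I$-admissible subset $\hat S\subset\Ncal$ and extend $\theta$ smoothly to $\Omega_{\hgot,\I}(\hat S)$ preserving no real periods and matching $\pgot$ on $\Hcal_1(\hat S,\z)$ by prescribing the integrals of $\theta$ along the added analytic arcs, so I may assume $S$ itself is connected.

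Following the recursive scheme in the proof of Theorem \ref{th:Mergelyan}, I fix $\xi>0$ small and an exhaustion $\{M_n\}_{n\in\n}$ of $\Ncal$ by Runge connected $\I$-invariant compact regions with $M_0=S$ and with the topological description of Remark \ref{rem:topology}. I construct inductively $1$-forms $\theta_n\in\Omega_{\hgot,\I}(\tilde M_n)$, where $\tilde M_n$ is an $\I$-invariant tubular neighborhood of $M_n$ that is relatively compact in $\Ncal$, with
\begin{enumerate}[\rm (A$_n$)]
\item $\|\theta_n-\theta_{n-1}\|_{0,M_{n-1}}<\xi/2^{n+1}$;
\item $\theta_n$ vanishes nowhere on $\tilde M_n\setminus R_S$, and $(\theta_n|_E)=(\theta|_E)$ for every connected component $E$ of $R_S$ on which $h$ is non-constant;
\item $\theta_n$ has no real periods on $\tilde M_n$ and $\Im\int_\gamma\theta_n=\pgot(\gamma)$ for every $\gamma\in\Hcal_1(M_n,\z)$.
\end{enumerate}

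The core of the proof is the inductive step. Assume $\theta_{n-1}$ is already constructed. When $M_n$ arises from $M_{n-1}$ by procedure 1 of Remark \ref{rem:topology}, one has $\Hcal_1(M_n,\z)=\Hcal_1(M_{n-1},\z)$. I apply Lemma \ref{lem:formaprox} to $\theta_{n-1}$ on $M_{n-1}$ with target domain $\tilde M_n$, producing $\tilde\theta\in\Omega_{\hgot,\I}(\tilde M_n)$ that is $\Ccal^0$-close to $\theta_{n-1}$ on $M_{n-1}$ and satisfies the divisor conditions in (B$_n$). Its periods may differ slightly from those of $\theta_{n-1}$; however, fixing an $\I$-basis $\{c_0,\ldots,c_{\nu_0},d_1,\ldots,d_{\nu_0}\}$ of $\Hcal_1(M_n,\r)$, the discrepancies $\delta_j:=-\imath\int_{c_j}(\tilde\theta-\theta_{n-1})\in\r$ and $\epsilon_j:=\int_{d_j}(\tilde\theta-\theta_{n-1})\in\r$ are small. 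The isomorphism \eqref{eq:isom} applied to $W=\tilde M_n$ furnishes a $\tau\in\Omega_{\hgot,\I}(\tilde M_n)$ with $-\imath\int_{c_j}\tau=\delta_j$ and $\int_{d_j}\tau=\epsilon_j$, whose cohomology class is a small linear combination of basis elements and hence admits a representative of arbitrarily small $\Ccal^0$ norm on $\tilde M_n$ (up to adding an exact form). Setting $\theta_n:=\tilde\theta-\tau$ enforces (A$_n$) and (C$_n$). When $M_n$ arises from $M_{n-1}$ by procedures 2--4, I first extend $\theta_{n-1}$ smoothly across the attaching arcs $\gamma\cup\I(\gamma)$, prescribing the integrals over the new homology classes so that the no-real-periods condition and the equality $\Im\int_c\theta_{n-1}=\pgot(c)$ hold on $\Hcal_1(M_{n-1}\cup\gamma\cup\I(\gamma),\z)$; this is a purely linear prescription of integrals of a $1$-form along Jordan arcs, hence trivial to realize. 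The enlarged $\I$-admissible set then reduces the step to the situation of procedure 1.

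The main technical obstacle is preserving (B$_n$) under the period correction, since adding the small holomorphic $\tau$ could a priori create new zeros in $\tilde M_n\setminus R_S$ or perturb the divisor on components where $h$ is non-constant; this is where I invoke Hurwitz's theorem and choose the approximation in Lemma \ref{lem:formaprox} sharp enough so that, after subtracting a sufficiently small $\tau$, neither the non-vanishing locus nor the prescribed integral divisor is disturbed. Once $\{\theta_n\}_{n\in\n}$ is constructed, (A$_n$) ensures convergence to some $\hat\theta\in\Omega_{\hgot,\I}(\Ncal)$ in the $\Ccal^0$ topology on compact subsets of $\Ncal$; (C$_n$) passes to the limit, giving both the no-real-periods property and the prescribed flux $\pgot$ on all of $\Hcal_1(\Ncal,\z)$; (B$_n$), together with a final application of Hurwitz's theorem and the fact that $\xi$ can be taken arbitrarily small, ensures that $\hat\theta=\partial\hat h$ vanishes nowhere on $\Ncal\setminus R_S$ and that $(\hat\theta|_E)=(\theta|_E)$ on the prescribed components. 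The function $\hat h(P):=h(P_0)+\Re\int_{P_0}^P\hat\theta$ is then the required $\I$-invariant harmonic extension, approximating $h_\theta$ in the $\Ccal^1$ topology on $S$.
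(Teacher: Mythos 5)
Your overall reduction (approximate $\theta=\partial h_\theta$ by a holomorphic $1$-form $\hat\theta\in\Omega_{\hgot,\I}(\Ncal)$ with the prescribed periods, divisor, and non-vanishing, then set $\hat h=h(P_0)+\Re\int_{P_0}\hat\theta$), the reduction to connected $S$ via Claim \ref{cla:con}, and the exhaustion by $\{M_n\}$ with $\chi(M_n^\circ\setminus M_{n-1})\in\{0,-2\}$ all match the paper. However, the inductive step contains a genuine gap, and it is precisely the gap that the paper's Lemma \ref{lem:weiaprox} was designed to close.

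The problem is the additive period correction $\theta_n:=\tilde\theta-\tau$. You correctly observe that Lemma \ref{lem:formaprox} gives $\tilde\theta$ on $\tilde M_n$ with the right divisor and the right non-vanishing on $\tilde M_n\setminus R_S$, but with uncontrolled periods, and you correctly observe that the period discrepancies are small and can be killed by subtracting a small $\tau$. The difficulty is that ``small'' here is relative to data you do not control: $\tau$ can only be taken small in proportion to $\|\tilde\theta-\theta_{n-1}\|_{0,M_{n-1}}$, but to keep $\tilde\theta-\tau$ non-vanishing on the compact set $M_n\setminus R_S^\circ$ you need $\|\tau\|_{0,M_n}$ to be smaller than $\min_{M_n\setminus R_S^\circ}|\tilde\theta/\sigma_\Ncal|$, and Lemma \ref{lem:formaprox} gives no lower bound whatsoever for $|\tilde\theta|$ on $\tilde M_n\setminus M_{n-1}$ as the approximation on $M_{n-1}$ is sharpened. (Its proof represents $\tilde\theta/\tau_0$ as $e^{h}v$ with $h$ a Runge approximation of a logarithm; outside $M_{n-1}$ the exponential factor can be arbitrarily small.) Hurwitz's theorem does not rescue this: it tells you that if a sequence converges to a non-vanishing limit then the terms eventually do not vanish, but here you are subtracting a perturbation from a moving target $\tilde\theta$ whose modulus in $M_n\setminus M_{n-1}$ you cannot control, so you cannot simultaneously shrink $\|\tau\|$ below the varying threshold $\min|\tilde\theta|$ while satisfying the period equations. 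Thus conditions (A$_n$), (B$_n$), (C$_n$) cannot all be enforced by this scheme.

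The paper sidesteps this by not separating the divisor/non-vanishing control (Lemma \ref{lem:formaprox}) from the period control: it first manufactures an auxiliary $g\in\Gcal_\I(S)$ via Riemann-Roch with $(\phi_3|_{R_S})=(g)_0(g)_\infty$, packages $(g,\phi_3)$ into a Weierstrass triple $\Phi$, and feeds it to Lemma \ref{lem:weiaprox}. There the period correction is multiplicative, of the form $\phi_{3,n}=e^{h_2}\psi_{3,n}$ with $h_2\in\Fcal_{\hgot,\I}(\overline W)$, and the Fr\'echet-derivative surjectivity (Claim \ref{ass:regular}) produces an $h_2$ with small maximum norm; since $e^{h_2}$ is nowhere vanishing, the divisor and non-vanishing of $\psi_{3,n}$ are preserved exactly, no matter how small $\psi_{3,n}$ may become off $M_{n-1}$. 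To repair your argument you would either have to invoke Lemma \ref{lem:weiaprox} as the paper does, or prove a single-form analogue directly (i.e.\ show that $h\mapsto(\Im\int_{c_j}e^{h}\phi_3,\int_{d_j}e^{h}\phi_3)$ has surjective derivative, reusing the $\Gamma_3=0$ argument of Claim \ref{ass:regular}); either way, replacing the additive subtraction of $\tau$ by a multiplicative exponential correction is the missing idea.
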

\begin{proof}
We assume without loss of generality that $\partial h_\theta$ vanishes nowhere on $C_S$ (cf.\ Claim \ref{ass:motion}) and that $S$ is connected (cf.\ Claim \ref{cla:con} and the proof of Theorem \ref{th:Mergelyan}).
Denote by $\phi_3=\partial h_\theta\in\Omega_{\hgot,\I}(S)$. Let $U\subset \Ncal$ be an $\I$-invariant relatively compact domain with finite topology, containing $S$. Reasoning as in the proof of Lemma \ref{lem:gaprox}, the Riemann-Roch theorem gives $g_0\in\Gcal_\I(U)$ such that  $(\phi_3|_{R_S})=(g_0)_0 (g_0)_\infty$. Then one can easily extend $g_0|_{R_S}$ to a function $g\in\Gcal_\I(S)$ such that the triple $\Phi=(\phi_j)_{j=1,2,3}$, obtained from the couple $(g,\phi_3)$ via \eqref{eq:recoverX}, satisfies the requirements in Lemma \ref{lem:weiaprox}.

Let $M_0:=S$ and let $\{M_n\}_{n\in\n}$ be an exhaustion of $\Ncal$ by Runge connected $\I$-invariant compact regions such that the Euler characteristic $\chi(M_n^\circ\setminus M_{n-1})\in\{0,-2\}$ for all $n\in\n$; see Remark \ref{rem:topology}. 

Let $\epsilon>0$. Fix $\xi>0$ to be specified later. Arguing as in the proof of Theorem \ref{th:Mergelyan}, a recursive application of Lemma \ref{lem:weiaprox} gives a sequence $\{\Psi_n=(\psi_{n,j})_{j=1,2,3}\}_{n\in\n}\subset\Omega_{\hgot,\I}(M_n)^3$ such that $\psi_{n,3}$ vanishes nowhere on $M_n\setminus R_S$, $\int_\gamma \psi_{n,3}=\imath \pgot(\gamma)$ for all $\gamma\in\Hcal_1(M_n,\z)$, and $\{\Psi_n\}_{n\in\n}$ converges in the $\Ccal^0$ topology on compact sets of $\Ncal$ to a triple $\Psi=(\psi_j)_{j=1,2,3}\in\Omega_{\hgot,\I}(\Ncal)^3$ with 
\begin{equation}\label{eq:epxi}
\|\Psi-\Phi\|_{0,M_0}<\xi.
\end{equation}
Therefore $\psi_3$ vanishes nowhere on $\Ncal\setminus R_S$ (by Hurwitz's theorem) and $\int_\gamma \psi_3=\imath \pgot(\gamma)$ for all $\gamma\in\Hcal_1(\Ncal,\z)$. Furthermore,  $\psi_{n,3}$ can be chosen so that $(\psi_{n,3}|_E)=(\phi_3|_E)\in\div_\I(E)$ for any connected component $E$ of $R_S$ such that   $\phi_3$ does not vanish everywhere on $E$, $n\in \n$. Therefore, Hurwitz's theorem gives that $(\psi_3|_E)=(\phi_3|_E)\in\div_\I(E)$ as well for any such $E$.

By \eqref{eq:epxi}, the $\I$-invariant harmonic function $\hat h:=h(P_0)+\Re\int_{P_0} \psi_3$, $P_0\in S$, satisfies $\|\hat h-h_\theta\|_{1,S}<\epsilon$, provided that $\xi$ is chosen small enough. This proves the theorem.
\end{proof}

\subsection{An application to Riemann surface theory}\label{subsec:GN}

Gunning and Narasimhan \cite{GunningNarasimhan} showed that every open Riemann surface carries exact nowhere vanishing holomorphic $1$-forms. This result was extended to the existence of holomorphic $1$-forms with prescribed periods and divisor by Kusunoki and Sainouchi \cite{KusunokiSainouchi}. Let us show the analogous result for non-orientable Riemann surfaces.

\begin{theorem}\label{th:GN}
Let $D'$ be an integral divisor on $\Ncal$, possibly with {\em countably infinite support}, such that ${\rm supp}(D')\cap{\rm supp}(\I(D'))=\emptyset$ and ${\rm supp}(D')\cap K$ is finite for any compact $K\subset \Ncal$. Call $D=D'\cup\I(D')$.
Let $\pgot\colon \Hcal_1(\Ncal,\z)\to\c^3$ be a group homomorphism such that $\pgot(\I_*(\gamma))=\overline{\pgot(\gamma)}$ for all $\gamma\in\Hcal_1(\Ncal,\z)$.
 
Then there exists $\vartheta\in \Omega_{\hgot,\I}(\Ncal)$ such that $(\vartheta)=D$ and $\int_\gamma \vartheta =\pgot(\gamma)$ for all $\gamma\in\Hcal_1(\Ncal,\z)$.
\end{theorem}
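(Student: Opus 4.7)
The plan is to seek $\vartheta$ in the factored form $\vartheta = H_0\, e^g\, \tau$, where $\tau\in\Omega_{\hgot,\I}(\Ncal)$ is nowhere vanishing, $H_0\in\Fcal_{\hgot,\I}(\Ncal)$ realizes the prescribed zero divisor $D$, and $g\in\Fcal_{\hgot,\I}(\Ncal)$ is constructed by exhaustion so that the periods of $H_0\,e^g\,\tau$ coincide with $\pgot$ on $\Hcal_1(\Ncal,\z)$. The factor $e^g$ is nowhere zero, so $(\vartheta)=D$ is automatic once $H_0$ is in place; the symmetry $\pgot(\I_*\gamma)=\overline{\pgot(\gamma)}$ matches the identity $\int_{\I_*\gamma}\vartheta=\overline{\int_\gamma\vartheta}$ enjoyed by any $\vartheta\in\Omega_{\hgot,\I}(\Ncal)$.

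To produce $\tau$, start with a nowhere-vanishing holomorphic $1$-form $\tau_0$ on $\Ncal$ given by Gunning-Narasimhan \cite{GunningNarasimhan}. Since $\I$ is antiholomorphic, $\overline{\I^*\tau_0}$ is a holomorphic $(1,0)$-form, and $\tau_1:=\tau_0+\overline{\I^*\tau_0}\in\Omega_{\hgot,\I}(\Ncal)$. If $\tau_1\equiv 0$, set $\tau:=\imath\tau_0$. Otherwise $\tau_1$ has a discrete $\I$-invariant zero divisor, necessarily of the form $E_0\,\I(E_0)$ with disjoint supports, and classical function theory on open Riemann surfaces (Weierstrass type) provides a holomorphic $h\colon\Ncal\to\c$ with $(h)=E_0$; then $H:=h\cdot\overline{h\circ\I}\in\Fcal_{\hgot,\I}(\Ncal)$ has $(H)=E_0\,\I(E_0)=(\tau_1)$, so $\tau:=\tau_1/H$ is nowhere vanishing in $\Omega_{\hgot,\I}(\Ncal)$. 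The same symmetrization applied to a holomorphic $h_0$ with $(h_0)=D'$ yields $H_0:=h_0\cdot\overline{h_0\circ\I}\in\Fcal_{\hgot,\I}(\Ncal)$ with $(H_0)=D$, using the hypothesis that ${\rm supp}(D')$ and ${\rm supp}(\I(D'))$ are disjoint.

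It remains to choose $g$ so that $\int_\gamma H_0\,e^g\,\tau=\pgot(\gamma)$ for all $\gamma\in\Hcal_1(\Ncal,\z)$. Exhaust $\Ncal$ by Runge $\I$-invariant compact regions $M_1\subset M_2\subset\cdots$ with $\chi(M_n^\circ\setminus M_{n-1})\in\{0,-2\}$ and ${\rm supp}(D)\cap b M_n=\emptyset$ for each $n$, as in the proof of Theorem \ref{th:Mergelyan}. Build recursively $g_n\in\Fcal_{\hgot,\I}(M_n)$ satisfying (i) $\int_\gamma H_0\,e^{g_n}\,\tau=\pgot(\gamma)$ for every $\gamma\in\Hcal_1(M_n,\z)$ and (ii) $\|g_n-g_{n-1}\|_{0,M_{n-1}}<2^{-n}$. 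Lemma \ref{lem:runge} first approximates $g_{n-1}$ by some $\tilde g_n\in\Fcal_{\hgot,\I}(\overline{M_n})$; the $\I$-basis of $\Hcal_1(M_n,\z)$ extends that of $\Hcal_1(M_{n-1},\z)$ by at most one new pair, so the mismatch between $\int_\gamma H_0\,e^{\tilde g_n}\,\tau$ and $\pgot(\gamma)$ lies in a finite-dimensional real subspace. Kill it by setting $g_n:=\tilde g_n+\sum_j x_j h_j$ for suitable $h_j\in\Fcal_{\hgot,\I}(\overline{M_n})$ and small $(x_j)$, via the implicit function theorem applied to the scalar period map $h\mapsto\bigl(\int_\gamma h\cdot H_0\,e^{\tilde g_n}\,\tau\bigr)_{\gamma}$. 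Surjectivity of its Fréchet derivative onto the real target $\r^{\nu_0+1}\times\r^{\nu_0}$ of \eqref{eq:isom} is proved by the duality argument of Claim \ref{ass:regular}: given a supposed nontrivial $\Gamma$ of the form \eqref{eq:Gamma} killing the image, for arbitrary $\eta\in\Omega_{\hgot,\I}(\overline{M_n})$ use Riemann-Roch to find $u\in\Fcal_{\hgot,\I}(\overline{M_n})$ with $(\eta+du)_0\geq(H_0\tau)|_{\overline{M_n}}$; then $h:=(\eta+du)/(H_0\,e^{\tilde g_n}\,\tau)\in\Fcal_{\hgot,\I}(\overline{M_n})$ is a valid test function and the hypothesis gives $\int_\Gamma\eta=\int_\Gamma(\eta+du)=0$, contradicting \eqref{eq:isom} since $\eta$ was arbitrary. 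A standard diagonal argument produces $g:=\lim g_n\in\Fcal_{\hgot,\I}(\Ncal)$, and $\vartheta:=H_0\,e^g\,\tau$ solves the problem; Hurwitz's theorem ensures that the limiting divisor remains $D$.

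The main obstacle is establishing surjectivity of the period-map derivative \emph{onto the correct real target}: by \eqref{eq:c+d}, every $\vartheta\in\Omega_{\hgot,\I}$ has purely imaginary periods along the $c_j$ and purely real periods along the $d_j$, so the effective target is the real $(2\nu_0+1)$-dimensional space of \eqref{eq:isom}, while the test functions must lie in the real Banach space $\Fcal_{\hgot,\I}(\overline{M_n})$, not in the full space of holomorphic functions. The adaptation of Claim \ref{ass:regular} runs with the Weierstrass data $(g,\phi_3)$ replaced by a single scalar $h$ and the triple $(\phi_1,\phi_2,\phi_3)$ replaced by the single $1$-form $H_0\,e^{\tilde g_n}\,\tau$; once this is in place, the inductive period matching combined with Hurwitz's theorem for the divisor completes the proof.
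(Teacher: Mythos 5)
Your factored form $\vartheta=H_0\,e^g\,\tau$ is a genuinely different route from the paper's, which at each exhaustion stage builds a full Weierstrass triple $\Phi_n$ satisfying the hypotheses of Lemma \ref{lem:weiaprox} and then extracts its third component. Your approach handles the divisor condition more cleanly, since $H_0\tau$ carries $D$ once and for all (at the modest cost of invoking the Weierstrass/second-Cousin theorem on all of $\Ncal$ to manufacture $H_0$ and $\tau$, something the paper confines to relatively compact sets in Proposition \ref{pro:Icero}). But this places the entire burden on the period-matching step, and that is where there is a genuine gap.

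In the inductive step you approximate $g_{n-1}$ on $M_{n-1}$ by some $\tilde g_n\in\Fcal_{\hgot,\I}(\overline{M_n})$ and then propose to ``kill'' the period mismatch by a correction $\sum_j x_j h_j$ with \emph{small} $(x_j)$, via the implicit function theorem. The implicit function theorem is a local tool: it reaches values of the period map close to its value at $\tilde g_n$. The mismatch $\int_\gamma H_0\,e^{\tilde g_n}\tau-\pgot(\gamma)$ is small only on cycles $\gamma\in\Hcal_1(M_{n-1},\z)$, because there $\tilde g_n\approx g_{n-1}$ and $g_{n-1}$ is exact. On the newly created homology pair, $\tilde g_n$ is whatever Lemma \ref{lem:runge} hands you on $M_n\setminus M_{n-1}$, and the resulting period may differ from $\pgot$ by an arbitrary amount; a small correction cannot bridge it. The duality argument of Claim \ref{ass:regular} shows the Fr\'echet derivative is surjective, hence the period map is open, but open is not surjective, so one may not simply claim the prescribed period is hit.

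The paper closes exactly this hole by an intermediate extension step \emph{before} any approximation: when $\chi(M_n^\circ\setminus M_{n-1})=-2$, one first extends the data to the $\I$-admissible set $S=M_{n-1}\cup\alpha\cup\I(\alpha)$, where $\alpha$ spans the new cycle, and chooses the extension on $\alpha$ so that $\int_\gamma$ equals $\pgot(\gamma)$ \emph{exactly} for every $\gamma\in\Hcal_1(M_n,\z)=\Hcal_1(S,\z)$; the infinite-dimensional freedom in a smooth extension along the arc makes any prescribed period reachable. Only then does one invoke Lemma \ref{lem:runge}/Lemma \ref{lem:weiaprox}, which preserve periods up to an exact difference, so the residual mismatch really is small and the IFT applies. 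Inserting this arc-extension step into your scheme (first extend $g_{n-1}$ to $\alpha\cup\I(\alpha)$ so that $\int_\alpha H_0\,e^{g_{n-1}}\tau$ realizes the prescribed value, which is possible since that integral ranges over all of $\c$ as the smooth extension of $g_{n-1}$ along $\alpha$ varies, then approximate, then apply the IFT) would repair the proof; as written the induction does not close.
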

\begin{proof}
Let $M_0\subset\Ncal$ be a connected $\I$-admissible set such that  $R_{M_0}=U\cup \I(U)$ where $U$ is a closed disc in $\Ncal$ and $U \cap \I(U)=\emptyset$, and ${\rm supp}(D)\cap (M_0\setminus M_0^\circ)=\emptyset$. Denote by $D_0$ the restriction of $D$ to $M_0$ (that is, the unique integer divisor in $M_0$ such that ${\rm supp}(D/D_0)\cap M_0=\emptyset$), and recall that ${\rm supp}(D_0)$ consists of finitely many points. 
Take  $\phi_{0,3}\in \Omega_{\hgot,\I}(R_{M_0})$ vanishing everywhere on no connected component of $R_{M_0}$ and with $(\phi_{0,3})=D_0$. As in the proof of Theorem \ref{th:Mergelyan}, one can extend $\phi_{0,3}$ to a smooth $1$-form $\psi_0\in \Omega_{\hgot,\I}(M_0)$ vanishing nowhere on $M_0 \setminus M_0^\circ$ and  satisfying that $\int_\gamma \psi_0=\pgot(\gamma)$ for all $\gamma\in \Hcal_1(M_0,\z)$.

Let  $\{M_n\}_{n\in\n}$ be an exhaustion of $\Ncal$ by Runge connected $\I$-invariant compact regions such that the Euler characteristic $\chi(M_n^\circ\setminus M_{n-1})\in\{0,-2\}$ and ${\rm supp}(D)\cap bM_n=\emptyset$ for all $n\in\n$; see Remark \ref{rem:topology}. Denote by $D_n$ the restriction of $D$ to $M_n$, and recall that ${\rm supp}(D_n)$ consists of finitely many points, $n\in\n$.

Assume that $\chi(M_1^\circ\setminus M_0)=0$. Let $N_0\subset M_1^\circ\setminus M_0$ be a  Runge $\I$-invariant compact region containing ${\rm supp}(D_1)\setminus {\rm supp}(D_0)$, and consisting of a finite collection of pairwise disjoint closed discs. Notice that  $M_0\cup N_0$ is $\I$-admissible. Take $\phi_{1,3}\in\Omega_{\hgot,\I}(M_0\cup N_0)$ vanishing everywhere on no connected component of $R_{M_0}\cup N_0$, and satisfying that $\phi_{1,3}|_{M_0}=\psi_0$, $(\phi_{1,3})=D_1$, and $\int_\gamma \phi_{1,3}=\pgot(\gamma)$ for all $\gamma\in \Hcal_1(M_1,\z)=\Hcal_1(M_0,\z)$. As in the proof of Theorem \ref{th:harmonic}, one can easily find a triple $\Phi_1=(\phi_{1,j})_{j=1,2,3}\in \Omega_{\hgot,\I}(M_0\cup N_0)^3$ meeting the requirements of Lemma \ref{lem:weiaprox}. Therefore, this lemma furnishes $\Psi_1=(\psi_{1,j})_{j=1,2,3}\in \Omega_{\hgot,\I}(M_1)^3$ as close as desired to $\Phi_1$ in the $\Ccal^0$ topology on $M_0\cup N_0$, satisfying that $\Psi_1-\Phi_1$ is exact on $M_0\cup N_0$ and $(\psi_{1,3})=(\phi_{1,3}).$ Call $\psi_1:=\psi_{1,3}\in \Omega_{\hgot,\I}(M_1)$ and notice that $(\psi_1)=D_1$, $\int_\gamma \psi_1=\pgot(\gamma)$ for all $\gamma\in \Hcal_1(M_1,\z)$, and $\psi_1$ is as close as desired to $\psi_0$ in the $\Ccal^0$ topology on $M_0$.

Assume that, on the contrary, $\chi(M_1^\circ\setminus M_0)=-2$. Then take a Jordan arc $\alpha$ in $M_1^\circ\setminus (M_0^\circ \cup {\rm supp}(D))$, with endpoints in $R_{M_0}$ and otherwise disjoint from $M_0$, such that $S:=M_0\cup\alpha\cup\I(\alpha)$ is $\I$-admissible and $\chi(M_1^\circ\setminus S)=0$; see Remark \ref{rem:topology}. Extend (with the same name) $\phi_{3,0}$ to a smooth $1$-form $\phi_{3,0}\in\Omega_{\hgot,\I}(S)$ vanishing nowhere on $\alpha$ and satisfying that $\int_\gamma \phi_{3,0}=\pgot(\gamma)$ for all $\gamma\in \Hcal_1(M_1,\z)=\Hcal_1(S,\z)$. Then, one can follow the argument in the above paragraph, replacing $M_0$ by $S$, and obtain as above $\psi_1\in \Omega_{\hgot,\I}(M_1)$ as close as desired to $\psi_0$ in the $\Ccal^0$ topology on $M_0$, with $(\psi_1)=D_1$ and $\int_\gamma \psi_1=\pgot(\gamma)$ for all $\gamma\in \Hcal_1(M_1,\z)$.

Repeating this argument inductively, one constructs a sequence $\{\psi_n\}_{n\in\n}\subset\Omega_{\hgot,\I}(M_n)$ such that $\psi_n$ is as close as desired to $\psi_{n-1}$ in the $\Ccal^0$ topology on $M_{n-1}$ for all $n>1$, $(\psi_n)=D_n$ and $\int_\gamma \psi_n=\pgot(\gamma)$ for all $\gamma\in \Hcal_1(M_n,\z)$, for all $n\in\n$. Furthermore, one can assume that $\{\psi_n\}_{n\in\n}$ converges uniformly on compact subsets of $\Ncal$ to a holomorphic $1$-form $\vartheta\in \Omega_{\hgot,\I}(\Ncal)$. Since $\psi_0$ is not identically zero and $\vartheta$ can be constructed as close as desired to $\psi_0$ on $M_0$, then $\vartheta$ may be assumed to be non-identically zero as well. Obviously $\int_\gamma \vartheta =\pgot(\gamma)$ for all $\gamma\in\Hcal_1(\Ncal,\z)$. By Hurwitz's theorem, $(\vartheta)=D$ and we are done. 
\end{proof} 


\subsection{Non-orientable minimal surfaces in $\r^3$ properly projecting into $\r^2$}\label{subsec:proper}

In this subsection we show that any open Riemann surface $\Ncal$ endowed with an antiholomorphic involution $\I\colon\Ncal\to\Ncal$ without fixed points, is furnished with an $\I$-invariant conformal minimal immersion $\Ncal\to\r^3$ whose image surface is a non-orientable minimal surface properly projecting into a plane, contained in a wedge in $\r^3$ of any given angle greater than $\pi$. Furthermore, the flux map of such surface can be prescribed under the compatibility condition \eqref{eq:flux-non}. This existence theorem links with a classical question by Schoen and Yau \cite[p.\ 18]{SchoenYau-harmonic}; see \cite{AL-proper,AL-conjugada} for a good setting on this problem.

\begin{theorem}\label{th:proper}
Let $\pgot\colon \Hcal_1(\Ncal,\z) \to \r^3$ be a group homomorphism satisfying
\[
\pgot(\I_*(\gamma))=-\pgot(\gamma)\enskip \forall \gamma\in\Hcal_1(\Ncal,\z),
\] 
and let $\theta$ be a real number in  $(0,\pi/2)$.

Let $M \subset \Ncal$ be a Runge $\I$-invariant compact region, and consider $Y\in \Mcal_\I(M)$ with flux map $\pgot_Y=\pgot|_{\Hcal_1(M,\z)}$, satisfying
\[
(x_3+\tan (\theta) |x_1|) \circ Y>1\enskip \text{everywhere on $M$.}
\]

Then for any $\epsilon > 0$ there exists a conformal minimal immersion $X\colon \Ncal \to \r^3$ satisfying $X\circ\I=X$ and the following properties:
\begin{itemize}
  \item $\pgot_X=\pgot,$
  \item $(x_3+\tan (\theta) |x_1|)\circ X\colon \Ncal\to\r$ is a positive proper function, and
  \item $\|X-Y\|_{1,M}< \epsilon.$
\end{itemize}
\end{theorem}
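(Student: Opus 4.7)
The plan is a recursive approximation scheme built on Theorem \ref{th:Mergelyan}. Write $\rho := x_3 + \tan(\theta)|x_1| \colon \r^3 \to \r$, fix a sequence $\{\epsilon_n\}_{n\in\n}\subset (0,\infty)$ with $\sum_{n\in\n} \epsilon_n < \epsilon$, and extend $M =: M_0$ to an exhaustion $\{M_n\}_{n\in\n}$ of $\Ncal$ by Runge $\I$-invariant compact regions with $\chi(M_n^\circ \setminus M_{n-1}) \in \{0,-2\}$ as in Remark \ref{rem:topology}. Inductively I would construct $Y_n \in \Mcal_\I(M_n)$ with $\|Y_n - Y_{n-1}\|_{1, M_{n-1}} < \epsilon_n$, $\pgot_{Y_n} = \pgot|_{\Hcal_1(M_n,\z)}$, $\rho \circ Y_n > 1$ on $M_n$, and $\rho \circ Y_n > n$ on $bM_n$. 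The $\Ccal^1$-Cauchy condition yields a limit $X \in \Mcal_\I(\Ncal)$ with $\|X - Y\|_{1,M} < \epsilon$ and $\pgot_X = \pgot$; the two inequalities, combined with the telescoping estimate $\|X - Y_n\|_{0,M_n} \leq \sum_{k>n} \epsilon_k$, imply that $\rho \circ X > 0$ everywhere on $\Ncal$ and $\rho \circ X > n - 1$ on $bM_n$ for $n$ sufficiently large, so that $\rho \circ X \colon \Ncal \to (0,\infty)$ is proper.

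The recursive step from $Y_{n-1}$ to $Y_n$ is the heart of the argument. When $\chi(M_n^\circ \setminus M_{n-1}) = -2$ I would first absorb the extra homology by enlarging $M_{n-1}$ to an $\I$-admissible set $\tilde S \subset M_n^\circ$ via a symmetric arc pair as in Remark \ref{rem:topology}, with $\chi(M_n^\circ \setminus \tilde S) = 0$; otherwise I would set $\tilde S := M_{n-1}$. Next I would attach to $\tilde S$ a finite family $\Gamma \cup \I(\Gamma)$ of pairwise disjoint analytic Jordan arcs, each running from $b \tilde S$ to $b M_n$ and otherwise disjoint from $\tilde S$, chosen so that $\Gamma \cap \I(\Gamma) = \emptyset$ and every connected component of $b M_n$ meets $\Gamma \cup \I(\Gamma)$; the resulting $S := \tilde S \cup \Gamma \cup \I(\Gamma)$ is then $\I$-admissible. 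I would extend $Y_{n-1}$ along each $\alpha \in \Gamma$ as a regular $\Ccal^1$ curve whose third coordinate $x_3$ reaches a value exceeding $n+2$ at the endpoint on $bM_n$ (forcing $\rho \geq x_3 > n+2$ there) while keeping $\rho > 1$ along the whole arc, and transport the extension to $\I(\Gamma)$ by $\I$-invariance. Equipping the resulting map with an orientable smooth normal field $\varpi$ satisfying $\varpi \circ \I = -\varpi$ yields a marked generalized $\I$-invariant minimal immersion $\tilde Y_\varpi \in \Mcal_{\ggot,\I}^*(S)$ with generalized flux $\pgot|_{\Hcal_1(S,\z)}$. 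Theorem \ref{th:Mergelyan}(I) then furnishes $Y_n \in \Mcal_\I(M_n)$ approximating $\tilde Y_\varpi$ in the $\Ccal^1$ topology on $S$ as closely as desired; by continuity the inequalities transfer to $Y_n$.

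The main obstacle is coordinating the $\I$-invariance with the prescription of large values of $\rho$ at the outer endpoints of $\Gamma$, and with the flux compatibility on the newly generated cycles. Because $\I$ acts freely, the arcs $\alpha \in \Gamma$ can be placed inside small tubular neighborhoods of the boundary components of $\tilde S$ so that $\alpha \cap \I(\alpha) = \emptyset$, and the prescribed large values of $\rho \circ \tilde Y$ on $\alpha$ then propagate automatically to $\I(\alpha)$ since $\rho$ is defined on the target $\r^3$ and $\tilde Y \circ \I = \tilde Y$. Similarly, $\varpi$ may be chosen freely on $\Gamma$ and extended to $\I(\Gamma)$ by $\varpi \circ \I = -\varpi$ without any interior compatibility constraint. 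The hypothesis $\pgot \circ \I_* = -\pgot$ is exactly what Theorem \ref{th:Mergelyan}(I) requires, so the only nontrivial homological point is that, when $\chi(M_n^\circ \setminus M_{n-1}) = -2$, the generalized flux of $\tilde Y$ along the new $\I$-symmetric pair of cycles generated at the enlargement step must match $\pgot$; this is achieved by an elementary adjustment of the $\Ccal^1$ extension of $Y_{n-1}$ along the auxiliary arcs, possible precisely because these cycles come in $\I$-symmetric pairs and $\pgot$ reverses sign under $\I_*$.
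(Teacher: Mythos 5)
The proposal has a genuine gap at its central recursive step, and it is precisely the gap that the paper's Lemma \ref{lem:fun} is designed to close. You extend $Y_{n-1}$ to a generalized marked immersion $\tilde Y_\varpi$ on an $\I$-admissible set $S = \tilde S \cup \Gamma \cup \I(\Gamma)$ and then invoke Theorem \ref{th:Mergelyan}(I) to produce $Y_n \in \Mcal_\I(M_n)$, claiming ``by continuity the inequalities transfer to $Y_n$.'' But Theorem \ref{th:Mergelyan} only gives $\Ccal^1$ closeness of $Y_n$ to $\tilde Y_\varpi$ \emph{on $S$}, and $S$ meets the annular region $M_n \setminus \tilde S$ only in the one-dimensional skeleton $\Gamma \cup \I(\Gamma)$. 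On the open two-dimensional components of $M_n \setminus S$, the approximation theorem provides no control whatsoever: $\rho \circ Y_n$ can plunge to arbitrarily negative values there. In particular neither $\rho \circ Y_n > 1$ on $M_n$ nor $\rho \circ Y_n > n$ on $bM_n$ follows from your construction (the boundary $bM_n$ only meets $\Gamma \cup \I(\Gamma)$ in finitely many points). Without these bounds the limit map has no reason to satisfy positivity or properness of $\rho \circ X$.

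This is why the paper's proof routes everything through Lemma \ref{lem:fun}, whose conclusion (iv), namely $Y(V\setminus M) \subset \Pi_\delta(\theta) \cup \Pi_\delta(-\theta)$, is the two-dimensional control that your argument lacks. To obtain it the paper does substantially more than extend along arcs: it (a) decomposes the wedge $\{\rho > \delta\}$ as the union of two half-spaces $\Pi_\delta(\pm\theta)$; (b) splits each inner boundary curve $\alpha_j$ into subarcs $\alpha_j^i$ alternating between the two half-spaces; (c) after a first approximation along arcs $r_j^i$ (the analogue of what you do), performs two further approximations on $\I$-admissible sets $S_\pm$ built from discs $\Omega_j^i$ and $K_j^i$; and, crucially, (d) these later stages use Theorem \ref{th:Mergelyan}(II) after an auxiliary rotation so that $(x_3 + \tan\theta\,x_1)\circ Z = (x_3 + \tan\theta\,x_1)\circ H$ holds \emph{identically on $V$}, which is how one dimensional control on the discs $K_j^i$ gets promoted to two-dimensional control on $\Omega_j^i$. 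Your proposal has no mechanism analogous to (d), and that is what would have to be supplied to make the recursion close.
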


The corresponding theorem for orientable minimal surfaces was obtained by the authors in \cite[Theorem 5.6]{AL-proper}, as application of the Runge-Mergelyan approximation result \cite[Theorem 4.9]{AL-proper}.
We adapt the proof in \cite{AL-proper} to the  non-orientable framework, sketching the necessary modifications. In this case our main tool is Theorem \ref{th:Mergelyan}. The complete details could easily be filled in by an interested reader.

We denote by $x_k\colon\r^3 \to \r$ the $k$-th coordinate function, $k=1,2,3.$ Given numbers $\theta\in(-\pi/2,\pi/2)$ and $\delta \in \r,$ we denote by 
\[
\Pi_\delta (\theta)=\big\{(x_1,x_2,x_3) \in \r^3 \colon x_3+\tan(\theta) x_1 > \delta\big\}.
\]

Theorem \ref{th:proper} will follow from a standard recursive application of the following approximation result.
\begin{lemma} \label{lem:fun}
Let $M,$ $V \subset \Ncal$ be two Runge $\I$-invariant compact regions with analytical boundary such that $M \subset V^\circ$ and the Euler characteristic $\chi(V\setminus M^\circ)\in\{-2,0\}$.
 
Let $X \in \Mcal_\I(M)$ and let $\pgot\colon \Hcal_1(V,\z)\to\r$ be any homomorphism extension of the flux map $\pgot_X$ of $X$, satisfying
\[
\pgot(\I_*(\gamma))=-\pgot(\gamma)\enskip \forall \gamma\in\Hcal_1(\Ncal,\z).
\] 
Let  $\theta\in (0,\pi/4)$ and $\delta>0$, and assume that 
\begin{equation}\label{eq:lemma}
X(bM) \subset \Pi_\delta(\theta) \cup \Pi_\delta(-\theta).
\end{equation}

Then, for any $\epsilon>0$  there exists $Y \in \Mcal_\I(V)$ enjoying the following properties:
\begin{enumerate}[\rm (i)]
\item The flux map $\pgot_Y$ of $Y$ equals $\pgot.$ 
\item $\|Y-X\|_{1,M}<\epsilon.$  
\item $Y(bV) \subset \Pi_{\delta+1}(\theta) \cup \Pi_{\delta+1}(-\theta).$ 
\item $Y(V\setminus M) \subset \Pi_\delta(\theta) \cup \Pi_\delta(-\theta).$
\end{enumerate}
\end{lemma}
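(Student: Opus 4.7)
The plan is to reduce Lemma \ref{lem:fun} to an application of Theorem \ref{th:Mergelyan}, adapting the strategy of the orientable analogue \cite[Lemma 5.5]{AL-proper} to the non-orientable framework. The main idea is to augment $M$ by a symmetric, labyrinth-like family $\Gamma$ of analytical Jordan arcs joining $bM$ to $bV$ inside $V\setminus M^\circ$, to extend $X$ across $\Gamma$ so that its image stays in the wedge $\Pi_\delta(\theta)\cup\Pi_\delta(-\theta)$ and goes deep into it near $bV$, and then to approximate the resulting marked immersion on the $\I$-admissible set $S:=M\cup\Gamma$.

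First, by \eqref{eq:lemma} and compactness of $bM$, I can partition $bM$ into a finite $\I$-symmetric collection of analytical subarcs, each mapped by $X$ either into $\Pi_\delta(\theta)$ or into $\Pi_\delta(-\theta)$. Next, I fix a sufficiently fine $\I$-symmetric finite set $P\subset bV$ (with $|P|$ to be fixed later) and connect its points to the transition points on $bM$ by pairwise disjoint analytical arcs in $V\setminus M^\circ$. This yields an $\I$-admissible set $S=M\cup\Gamma$ such that every connected component of $V\setminus S$ is a topological disk whose boundary consists of (i) a subarc of $bM$ lying in one half-space, (ii) one or two arcs of $\Gamma$ to be placed in the \emph{same} half-space, and (iii) a short subarc of $bV$. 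Each of the topological cases $\chi(V\setminus M^\circ)\in\{0,-2\}$ in Remark \ref{rem:topology} admits such a configuration; in the $\chi=-2$ cases the extra handle or pair of ears is absorbed into $\Gamma$.

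I then extend $X$ to a marked $\I$-invariant generalized minimal immersion $\tilde X_{\tilde\varpi}\in\Mcal_{\ggot,\I}^*(S)$: along each $\gamma\subset\Gamma$, $\tilde X(\gamma)$ is drawn inside the appropriate half-space $\Pi_\delta(\pm\theta)$, and its $bV$-endpoint is placed in $\Pi_{\delta+N}(\pm\theta)$ for a large constant $N$ to be fixed at the end. The conormal field $\tilde\varpi$ along $\Gamma$ is tuned, by a finite-dimensional adjustment compatible with $\pgot\circ\I_*=-\pgot$, so that the generalized flux $\pgot_{\tilde X_{\tilde\varpi}}$ equals $\pgot|_{\Hcal_1(S,\z)}$. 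Then Theorem \ref{th:Mergelyan}(I), applied to $\tilde X_{\tilde\varpi}$ and to any homomorphism extension $\pgot\colon\Hcal_1(\Ncal,\z)\to\r^3$, yields $Y\in\Mcal_\I(\Ncal)$ with $\pgot_Y=\pgot$ and $\|Y-\tilde X_{\tilde\varpi}\|_{1,S}<\eta$ for any prescribed $\eta>0$. Picking $\eta<\epsilon$ delivers (i) and (ii) at once.

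For (iii) and (iv), I invoke the minimum principle for the harmonic functions $h_\pm:=(x_3\pm\tan(\theta)x_1)\circ Y$ on each disk $D\subset V\setminus S$. The $\Ccal^1$ estimate on $S$ ensures $h_\pm>\delta$ on $\partial D\cap(bM\cup\Gamma)$ with a definite margin. For the remaining piece $\beta:=\partial D\cap bV$: its endpoints already lie in $\Pi_{\delta+N}(\pm\theta)$, and the a priori bound $\|\partial Y\|_{0,\overline V}\leq\|\partial \tilde X_{\tilde\varpi}\|_{0,S}+\eta$ forces the total variation of $h_\pm\circ Y$ along $\beta$ to be as small as desired once $P$ is taken fine enough. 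Hence $h_\pm\circ Y>\delta+1$ on $\beta$ provided $N$ is chosen large and $|P|$ large, which combined with the min principle on $D$ gives $Y(\overline D)\subset\Pi_\delta(\pm\theta)$. Taking the union over all $D$ yields (iv), and evaluating on the $\beta$'s gives (iii).

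\textbf{Main obstacle:} the simultaneous realization of all the properties demanded of the arc system $\Gamma$, namely being $\I$-symmetric, matching the half-space partition of $bM$, producing a disk decomposition of $V\setminus S$ in each topological case of Remark \ref{rem:topology}, and admitting a flux extension compatible with the antisymmetry $\pgot\circ\I_*=-\pgot$. Once this combinatorial/topological step is settled, the analytic core is provided directly by Theorem \ref{th:Mergelyan} and the min principle.
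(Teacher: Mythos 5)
Your outline starts in the right place (the arc system $\Gamma$, the set $S=M\cup\Gamma$, Claim~\ref{cla:con}/Remark~\ref{rem:topology} to reduce the case $\chi=-2$ to $\chi=0$), but the closing step is not correct: the bound you invoke, $\|\partial Y\|_{0,\overline V}\leq\|\partial \tilde X_{\tilde\varpi}\|_{0,S}+\eta$, does not follow from Theorem~\ref{th:Mergelyan}. The approximation statement $\|Y-\tilde X_{\tilde\varpi}\|_{1,S}<\eta$ only controls $\partial Y$ on the $\I$-admissible set $S$; it gives no estimate on $\overline V\setminus S$, and in particular none on the arcs $\beta\subset bV\setminus\Gamma$ bounding the discs $D$. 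Without such an estimate, refining $P$ to shorten the $\beta$'s does not make the variation of $(x_3\pm\tan\theta\,x_1)\circ Y$ along $\beta$ small, because nothing bounds $|d(h_\pm\circ Y)|$ there. So the minimum-principle step on each disc $D$ does not close: its boundary $\partial D$ contains $\beta$, on which $h_\pm\circ Y>\delta$ is precisely what needs to be proved.

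The paper gets around this by \emph{not} relying on an a priori $\Ccal^1$ bound outside $S$. After a first application of Theorem~\ref{th:Mergelyan}(I) on $S=M\cup\bigcup r_j^i\cup\I(r_j^i)$ to produce an immersion $H\in\Mcal_\I(V)$ whose behaviour on the arcs $r_j^i\cup\alpha_j^i\cup r_j^{i+1}$ and the endpoints $P_j^i$ is as desired, it uses continuity of $H$ to isolate closed discs $K_j^i\subset\Omega_j^i$ outside of which $H$ already lies in the correct wedge. The decisive move is the subsequent use of Theorem~\ref{th:Mergelyan}\emph{(II)}: by applying the Mergelyan approximation with the harmonic function $(x_3+\tan\theta\,x_1)\circ H$ \emph{held fixed on all of $V$} (after a suitable rotation), the resulting $Z$, and then $Y$, satisfies $(x_3+\tan\theta\,x_1)\circ Z=(x_3+\tan\theta\,x_1)\circ H$ identically — not just approximately on $S$. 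This exact invariance is what propagates the wedge condition from the $\I$-admissible set to all of $V\setminus M$, including the uncontrolled pieces $\beta\subset bV$. Your argument has no mechanism to replace this; you would need to replace the minimum-principle step with a two-stage argument using part~(II) of Theorem~\ref{th:Mergelyan}, exactly as in the paper (and in~\cite[Lemma~5.1]{AL-proper}, the correct reference in the orientable setting).
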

\begin{proof}[Proof of Lemma \ref{lem:fun} in case $\chi(V\setminus M^\circ)=0$.]
Since $M \subset V^\circ$ and $V^\circ\setminus M$ has no relatively compact connected components in $V^\circ,$ then
 $V\setminus M^\circ=\cup_{j=1}^\jgot (A_j \cup \I(A_j)),$ where $\jgot\in\n$ denotes the number of boundary components of $V$ (hence, of $M$) and $A_1,\I(A_1),\ldots, A_\jgot,\I(A_\jgot)$ are pairwise disjoint compact annuli. 
 
Write $bA_j=\alpha_j \cup \beta_j,$ where $\alpha_j\subset b M$ and $\beta_j\subset b V$  for all $j=1,\ldots,\jgot.$ Obviously, $\I(\alpha_j)\subset b M$, $\I(\beta_j)\subset b V$, and $b \I(A_j)=\I(\alpha_j) \cup \I(\beta_j)$ for all $j=1,\ldots,\jgot.$

From inclusion \eqref{eq:lemma}, it follows the existence of a natural number $\igot\geq 2$, a collection of sub-arcs $\big\{\alpha_j^i \colon (i,j)\in I=\z_\igot\times \{1,\ldots,\jgot\}\big\}$, where $\alpha_j^i\subset\alpha_j$ for all $(i,j)\in I$ and $\z_\igot=\{0,\ldots,\igot-1\}$ denotes the additive cyclic group of integers modulus $\igot\in\n$, and subsets $I_+$ and $I_-$ of $I$, such that
\begin{itemize}
\item $I_+\cap I_-=\emptyset,$ $I_+\cup I_-=I$,
\item $\alpha_j^i$ and $\alpha_j^{i+1}$ have a common endpoint $Q_j^{i+1}$ and are otherwise disjoint, and
\item $X(\alpha_j^i) \subset \Pi_\delta(\pm\theta)$ for all $(i,j)\in I_\pm$.
\end{itemize}
In particular, $\alpha_j=\cup_{i\in \z_\igot} \alpha_j^i$ for all $j=1,\ldots,\jgot$, and $X(\I(\alpha_j^i)) \subset \Pi_\delta(\pm\theta)$ for all $(i,j)\in I_\pm$; recall that $X$ is $\I$-invariant.

Choose a family $\{r_j^i\colon (i,j)\in I\}$ of pairwise
disjoint analytical compact Jordan arcs such that $r_j^i$ is contained in $A_j$, has
initial point $Q_j^i \in \alpha_j,$ final point $P_j^i\in \beta_j,$ is otherwise disjoint from $bA_j,$ and meets transversally $\alpha_j^i$ at the point $Q_j^i,$ for all $(i,j)\in I$.
The set 
\[
S:=M \cup \big( \cup_{(i,j)\in I} r_j^i\cup \I(r_j^i) \big)
\]
is $\I$-admissible in the sense of Def.\ \ref{def:admi}.

In a first step, we construct an $\I$-invariant conformal minimal immersion $H\in\Mcal_\I(V)$ meeting the theses of the lemma on points of $S;$ more specifically, satisfying
\begin{enumerate}[\rm (1$_H$)]
\item $\|H-X\|_{1,S}<\epsilon/3,$
\item $H(r_j^i\cup \alpha_j^i\cup r_j^{i+1})\subset \Pi_\delta(\pm\theta)$ for all $(i,j)\in I_\pm$, 
\item $H(\{P_j^i,P_j^{i+1}\})\subset \Pi_{\delta+1}(\pm\theta)$ for all $(i,j)\in I_\pm$, and
\item $\pgot_H=\pgot$.
\end{enumerate}
Such an $H$ is furnished by Theorem \ref{th:Mergelyan}-{\rm (I)} applied to a suitable $\I$-invariant extension $\hat X$ of $X$ to $S$, formally meeting properties {\rm (2$_H$)} and {\rm (3$_H$)} (cf. \cite[Subsec.\ 5.1]{AL-proper}). To construct such an extension, we first define $\hat X$ over $\cup_{(i,j)\in I}r_j^i$ and then we extend it to $S$ (that is to say; we define $\hat X$ over $\cup_{(i,j)\in I}\I(r_j^i)$) to be $\I$-invariant.

Denote by $\Omega_j^i$ the closed disc in $A_j$
bounded by $\alpha_j^i\cup r_j^i \cup r_j^{i+1}$ and the compact Jordan arc $\beta_j^i\subset\beta_j$ connecting $P_j^i$ and
$P_j^{i+1},$ and containing no $P_j^k$ for $k\neq i,i+1,$ $(i,j)\in I$. Since $H$ is continuous, then properties {\rm (2$_H$)} and {\rm (3$_H$)} extend to small open neighborhoods of $r_j^i\cup \alpha_j^i\cup r_j^{i+1}$ and $\{P_j^i,P_j^{i+1}\}$, respectively; hence there exists a closed disc $K_j^i\subset \Omega_j^i\setminus (r_j^i\cup \alpha_j^i\cup r_j^{i+1})$, intersecting $\beta_j^i$ in a compact Jordan arc, such that
\begin{itemize}
\item $H(\overline{\Omega_j^i\setminus K_j^i})\subset \Pi_\delta(\pm\theta)$ for all $(i,j)\in I_\pm$, and
\item $H(\overline{\beta_j^i\setminus K_j^i})\subset \Pi_{\delta+1}(\pm\theta)$ for all $(i,j)\in I_\pm$.
\end{itemize}

Assume without loss of generality that $I_+\neq\emptyset$; otherwise $I_-=I\neq\emptyset$ and we would reason in a symmetric way. Consider the $\I$-admissible set 
\[
S_+:=M \cup \big( \cup_{(i,j)\in I_-} \Omega_j^i\cup \I(\Omega_j^i) \big)\cup \big( \cup_{(i,j)\in I_+} K_j^i\cup \I(K_j^i) \big).
\]

In a second step, we construct an $\I$-invariant conformal minimal immersion $Z\in\Mcal_\I(V)$ meeting the theses of the lemma on points of $S_+;$ more concretely, satisfying
\begin{enumerate}[\rm (1$_Z$)]
\item $\|Z-H\|_{1,M \cup ( \cup_{(i,j)\in I_-} \Omega_j^i\cup \I(\Omega_j^i))}<\epsilon/3,$
\item $Z(\overline{\Omega_j^i\setminus K_j^i})\subset \Pi_\delta(\pm\theta)$ for all $(i,j)\in I_\pm$, 
\item $Z(\overline{\beta_j^i\setminus K_j^i})\subset \Pi_{\delta+1}(\pm\theta)$ for all $(i,j)\in I_\pm$,
\item $Z(K_j^i)\subset \Pi_{\delta+1}(-\theta)$ for all $(i,j)\in I_+=I\setminus I_-$, and
\item $\pgot_Z=\pgot$.
\end{enumerate}
The immersion $Z$ is furnished by Theorem \ref{th:Mergelyan}-{\rm (II)} applied to a suitable $\I$-invariant extension $\hat H$ of $H|_{M \cup (\cup_{(i,j)\in I_-} \Omega_j^i\cup \I(\Omega_j^i))}$ to $S_+$. The key point here is to insure that 
\begin{equation}\label{eq:proper}
(x_3 + \tan(\theta)x_1) \circ Z = (x_3 + \tan(\theta)x_1)\circ H\enskip \text{everywhere on $V$},
\end{equation}
 which is possible by Theorem \ref{th:Mergelyan}-{\rm (II)} up to suitably rotating $H$; cf. \cite[Subsec.\ 5.1]{AL-proper}. As above, in order to construct $\hat H$, we first define it over $\cup_{(i,j)\in I_+} K_j^i$, formally meeting {\rm (4$_Z$)} and \eqref{eq:proper}, and then we extend it to $S_+$ (that is; we define $\hat H$ over $\cup_{(i,j)\in I_+}\I(K_j^i)$) as an $\I$-invariant map.

If $I_-=\emptyset$ the proof is already done; otherwise we consider the $\I$-admissible set 
\[
S_-:=M \cup \big( \cup_{(i,j)\in I_+} \Omega_j^i\cup \I(\Omega_j^i) \big)\cup \big( \cup_{(i,j)\in I_-} K_j^i\cup \I(K_j^i) \big).
\]

To finish the proof, we construct an $\I$-invariant conformal minimal immersion $Y\in\Mcal_\I(V)$ satisfying the following properties:
\begin{enumerate}[\rm (1$_Y$)]
\item $\|Y-Z\|_{1,M \cup ( \cup_{(i,j)\in I_+} \Omega_j^i\cup \I(\Omega_j^i))}<\epsilon/3,$
\item $Y(\overline{\Omega_j^i\setminus K_j^i})\subset \Pi_\delta(\pm\theta)$ for all $(i,j)\in I_\pm$, 
\item $Y(\overline{\beta_j^i\setminus K_j^i})\subset \Pi_{\delta+1}(\pm\theta)$ for all $(i,j)\in I_\pm$,
\item $Y(K_j^i)\subset \Pi_{\delta+1}(\pm\theta)$ for all $(i,j)\in I\setminus I_\pm$, and
\item $\pgot_Y=\pgot$.
\end{enumerate}
Such $Y$ is furnished by Theorem \ref{th:Mergelyan}-{\rm (II)} applied to a suitable $\I$-invariant extension of $Z|_{M \cup (\cup_{(i,j)\in I_+} \Omega_j^i\cup \I(\Omega_j^i))}$ to $S_-$, in a symmetric way to the previous step; cf. again \cite[Subsec.\ 5.1]{AL-proper}.

The immersion $Y$ meets all the requirements in Lemma \ref{lem:fun}.
\end{proof}

\begin{proof}[Proof of Lemma \ref{lem:fun} in case $\chi(V\setminus M^\circ)=-2$.] In this case, there exists an analytical Jordan arc $\gamma\subset V^\circ\setminus M^\circ$, attached to $b M$ at its endpoints and otherwise disjoint to $M$, such that $\gamma\cap\I(\gamma)=\emptyset$, $S:=M\cup\gamma\cup\I(\gamma)$ is an $\I$-admissible set in $\Ncal$, and $\chi(V^\circ \setminus S)=0$; see Remark \ref{rem:topology}-2,3,4. Extend $X$ to a suitable generalized marked immersion $\tilde X_\varpi=(\tilde X,\varpi)\in\Mcal_{\ggot,\I}^*(S)$, satisfying 
\[
\tilde X(S\setminus S^\circ) \subset \Pi_\delta(\theta) \cup \Pi_\delta(-\theta)
\] 
and $\pgot_{\tilde X_\varpi}=\pgot|_{\Hcal_1(S,\z)}$. Applying Theorem \ref{th:Mergelyan} to $\tilde X_\varpi$ we then reduce the proof to the case when $\chi(V\setminus M^\circ)=0$, and are done.
\end{proof}

\begin{proof}[Proof of Theorem \ref{th:proper}]
Reasoning as in the proof of Theorem \ref{th:Mergelyan}, we assume without loss of generality that $M$ is connected.
Set $M_0:=M$ and let $\{M_n\}_{n\in\n}$ be an exhaustion of $\Ncal$ by Runge connected $\I$-invariant compact regions such that the Euler characteristic $\chi(M_n^\circ\setminus M_{n-1})\in\{0,-2\}$ for all $n\in\n$; see Remark \ref{rem:topology}. To prove the theorem we follow the argument that shows \cite[Theorem 5.6]{AL-proper}, using Lemma \ref{lem:fun} instead of \cite[Lemma 5.1]{AL-proper}.
\end{proof}


\subsection{Non-orientable minimal surfaces in $\r^3$ and harmonic functions}\label{subsec:coordenada}

Let $\Ncal$ be an open Riemann surface endowed with an antiholomorphic involution 
$\I\colon\Ncal\to\Ncal$ without fixed points. In this subsection we show that every non-constant $\I$-invariant harmonic function $h\colon \Ncal\to\r$ is a coordinate function of a complete conformal $\I$-invariant minimal immersion $\Ncal\to\r^3$; see Theorem \ref{th:coordenada}. We then derive existence of complete non-orientable minimal surfaces in $\r^3$, with {\em arbitrary conformal structure}, whose Gauss map (see Def.\ \ref{def:gauss-non}) omits one point of the projective plane $\r\p^2$; see Corollary \ref{co:coordenada}. Recall that, by Fujimoto \cite{Fujimoto-points}, the Gauss map of complete non-orientable minimal surfaces in $\r^3$ misses at most two points of $\r\p^2.$ Furthermore, there exist non-orientable Riemann surfaces which do not carry complete conformal minimal immersions into $\r^3$ with Gauss map omitting two points of $\r\p^2$; for instance, by great Picard's theorem, those being parabolic and of finite topology.

The analogous results in the orientable framework were obtained by the authors and Fern\'andez in \cite{AFL-1} (see also \cite{AF-coordenada} for a partial result). Again, we only sketch here the necessary modifications to adapt the proof in \cite{AFL-1} to the non-orientable setting, by using Theorem \ref{th:Mergelyan}. 

\begin{theorem}\label{th:coordenada}
Let $h\colon\Ncal\to\r^3$ be a non-constant $\I$-invariant harmonic function, and let $\pgot\colon\Hcal_1(\Ncal,\z)\to\r^3$ be a group homomorphism such that
$\pgot(\I_*(\gamma))=-\pgot(\gamma)$ and the third coordinate of $\pgot(\gamma)$ equals $\Im \int_\gamma \partial h$, for all  $\gamma\in\Hcal_1(\Ncal,\z).$

Then there exists a complete conformal $\I$-invariant minimal immersion $X=(X_1,X_2,X_3)\colon\Ncal\to\r^3$ with $X_3=h$ and $\pgot_X=\pgot$.
\end{theorem}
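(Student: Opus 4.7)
The plan is to construct $X$ by a recursive use of Theorem \ref{th:Mergelyan}-(II), following the scheme of \cite{AFL-1} for the orientable case and inserting the $\I$-equivariance at every step. First I fix an $\I$-invariant Runge compact region $M_0\subset\Ncal$ small enough that $\partial h$ does not vanish on $M_0$, a base point $P_0\in M_0$, and construct an initial $X_0\in\Mcal_\I(M_0)$ whose third Weierstrass $1$-form is $\phi_3:=\partial h|_{M_0}$ and whose flux equals $\pgot|_{\Hcal_1(M_0,\z)}$. This amounts to choosing a Gauss map $g_0\in\Gcal_\I(M_0)$ compatible with $\phi_3$ via \eqref{eq:recoverX}; since $\partial h$ has no zeros on $M_0$, one can pick $g_0$ freely (outside the unit circle) and then adjust the remaining period conditions by a Fr\'echet-rank argument analogous to Claim \ref{ass:regular}, using the hypothesis that the third coordinate of $\pgot$ equals $\Im\int\partial h$.

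Next I take an exhaustion $\{M_n\}_{n\in\n}$ of $\Ncal$ by Runge connected $\I$-invariant compact regions with $M_0\subset M_1^\circ$ and $\chi(M_n^\circ\setminus M_{n-1})\in\{0,-2\}$, as in Remark \ref{rem:topology}, and build inductively $X_n\in\Mcal_\I(M_n)$ satisfying, for a fixed summable $\{\epsilon_n\}\subset(0,\infty)$,
\begin{enumerate}[\rm (a)]
\item $\|X_n-X_{n-1}\|_{1,M_{n-1}}<\epsilon_n$,
\item $X_{n,3}=h|_{M_n}$,
\item $\pgot_{X_n}=\pgot|_{\Hcal_1(M_n,\z)}$,
\item $\mathrm{dist}_{X_n}(P_0,b M_n)\geq n$.
\end{enumerate}
Properties (a)--(c) follow from a direct invocation of Theorem \ref{th:Mergelyan}-(II), applied to $X_{n-1}$ viewed as a generalized marked $\I$-invariant minimal immersion on a suitable $\I$-admissible enlargement of $M_{n-1}$ inside $M_n$; the hypotheses of (II) are met because $\partial h$ is a global holomorphic $1$-form in $\Omega_{\hgot,\I}(\Ncal)$ without real periods whose imaginary periods coincide with the third coordinate of $\pgot$.

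The heart of the argument is the completeness estimate (d). Because $X_{n,3}=h$ is fixed, the intrinsic metric reduces to $ds^2=\tfrac12(|g_n|+1/|g_n|)^2|\phi_3|^2$ and can be inflated only through the Gauss map $g_n$. Following a Nadirashvili-type labyrinth construction, before applying Theorem \ref{th:Mergelyan}-(II) in the $n$-th step I insert in $M_n\setminus M_{n-1}^\circ$ an $\I$-invariant network of pairwise disjoint analytical Jordan arcs, together with their $\I$-images, forming with $M_{n-1}$ an $\I$-admissible set $S_n\subset M_n^\circ$. On these arcs I prescribe a generalized marked immersion extending $X_{n-1}$, with $X_3=h$ unchanged and with $|g|$ alternately very small and very large across successive walls; the pairing $g\circ\I=-1/\bar{g}$ automatically makes this extension $\I$-invariant since small $|g|$ on one sheet corresponds to large $|g|$ on the other. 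Approximating this generalized marked immersion by a genuine $X_n\in\Mcal_\I(M_n)$ via Theorem \ref{th:Mergelyan}-(II) preserves the wild oscillation of $|g_n|$ on the walls, and any curve from $P_0$ to $bM_n$ must cross many of them, each contributing a large intrinsic length. This yields (d).

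The main obstacle is reconciling three competing requirements in every inductive step: keeping $X_{n,3}=h$ exactly, preserving the prescribed flux, and inflating the intrinsic metric across $M_n\setminus M_{n-1}^\circ$. The first constraint rules out the L\'opez-Ros deformation commonly used in Nadirashvili-style constructions and makes Theorem \ref{th:Mergelyan}-(II) indispensable; the $\I$-equivariance forces a symmetric labyrinth, which is harmless because of the symmetry $g\circ\I=-1/\bar g$. Once $\{X_n\}$ is constructed, it converges in the $\Ccal^1$ topology on compact subsets of $\Ncal$ to an $\I$-invariant conformal minimal immersion $X\colon\Ncal\to\r^3$ (by Harnack-type arguments and Hurwitz's theorem, using that $\phi_3=\partial h$ never degenerates along the limit); (b) and (c) pass to the limit giving $X_3=h$ and $\pgot_X=\pgot$. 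Completeness follows because any divergent curve in $\Ncal$ eventually leaves every $M_n$, while (a) with $\sum\epsilon_n<\infty$ transfers (d) to the limit, yielding $\mathrm{dist}_X(P_0,bM_n)\geq n-\sum_{k>n}\epsilon_k$.
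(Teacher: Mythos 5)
Your proposal follows essentially the same route as the paper: reduce to a connected $\I$-invariant initial region, take an $\I$-invariant Runge exhaustion with $\chi(M_n^\circ\setminus M_{n-1})\in\{0,-2\}$, extend recursively via Theorem~\ref{th:Mergelyan}-(II) combined with an $\I$-symmetric Jorge--Xavier--type labyrinth in each annular layer, and pass to the limit; the paper simply extracts this inductive step as Lemma~\ref{lem:coordenada} (treating the cases $\chi=0$ and $\chi=-2$ separately) and delegates the labyrinth details to \cite[Claim 3.2]{AFL-1}. One point worth making explicit, which your sketch glosses over: since $\partial h$ may vanish at (isolated) points of $M_n\setminus M_{n-1}^\circ$ and the metric is $\tfrac12(|g|+1/|g|)^2|\partial h|^2$, merely prescribing $|g_n|$ alternately tiny and huge on a generic family of walls need not force large intrinsic length near a zero of $dh$; the labyrinth has to be \emph{adapted to} $dh$ in the sense of \cite{AFL-1}, so that every transversal crossing picks up a definite contribution from $|\partial h|$, and the walls must be chosen off the zero set of $\partial h$ so that the hypotheses of Theorem~\ref{th:Mergelyan}-(II) (in particular $\phi_3$ nonvanishing on $C_S$) hold. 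Your observation that the symmetry $g\circ\I=-1/\overline g$ together with the invariance of $|g|+1/|g|$ under $g\mapsto 1/g$ makes the labyrinth $\I$-equivariantly consistent is exactly the point the paper also uses.
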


The proof of Theorem \ref{th:coordenada} relies on a recursive application of the following.

\begin{lemma} \label{lem:coordenada}
Let $M,$ $V \subset \Ncal$ be two Runge $\I$-invariant compact regions with analytical boundary such that $M \subset V^\circ$ and the Euler characteristic $\chi(V\setminus M^\circ)\in\{-2,0\}$.
 
Let $h\colon V\to\r$ be a non-constant $\I$-invariant harmonic function, let $X=(X_1,X_2,X_3) \in \Mcal_\I(M)$, and let $\pgot\colon \Hcal_1(V,\z)\to\r$ be a group homomorphism, satisfying $X_3=h|_M$, $\pgot_X=\pgot|_{\Hcal_1(M,\z)},$ $\pgot(\I_*(\gamma))=-\pgot(\gamma)$, and the third coordinate of $\pgot(\gamma)$ equals $\Im\int_\gamma \partial h$, for all $\gamma\in\Hcal_1(\Ncal,\z).$
 
Then, for any $P_0\in M$ and $\epsilon>0$,  there exists $Y=(Y_1,Y_2,Y_3)\in \Mcal_\I(V)$ enjoying the following properties:
\begin{enumerate}[\rm (i)]
\item The flux map $\pgot_Y$ of $Y$ equals $\pgot.$ 
\item $\|Y-X\|_{1,M}<\epsilon.$  
\item $Y_3=h$.
\item ${\rm dist}_Y(P_0,bV))>1/\epsilon$, where ${\rm dist}_Y$ denotes the distance on $V$ in the intrinsic metric of the immersion $Y$.
\end{enumerate}
\end{lemma}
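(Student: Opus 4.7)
The plan is to adapt the Nadirashvili-style labyrinth argument of \cite{AFL-1} to the non-orientable setting, with Theorem \ref{th:Mergelyan}-(II) as the main tool since it lets us approximate while keeping $Y_3=h$ fixed exactly.

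First I would reduce the case $\chi(V\setminus M^\circ)=-2$ to $\chi(V\setminus M^\circ)=0$ as at the end of the proof of Lemma \ref{lem:fun}: pick an $\I$-invariant pair $\gamma\cup\I(\gamma)$ of disjoint analytic arcs in $V^\circ\setminus M^\circ$ so that $S_0:=M\cup\gamma\cup\I(\gamma)$ is $\I$-admissible and $\chi(V^\circ\setminus S_0)=0$, extend $X$ to a generalized marked immersion on $S_0$ with third coordinate $h|_{S_0}$, and apply Theorem \ref{th:Mergelyan}-(II) to replace this extension by an element of $\Mcal_\I(\Ncal)$ with third coordinate still $h$. Restricting to a suitable $\tilde V\supset S_0$ reduces the problem to the $\chi=0$ case on a slightly enlarged region.

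Now assume $\chi(V\setminus M^\circ)=0$, so $V\setminus M^\circ$ is a disjoint union of $\I$-paired annuli $A_j\cup\I(A_j)$. In each annulus I would build a Nadirashvili-style labyrinth $L_j$ of finitely many pairwise disjoint simply connected compact regions with $L_j\cap\I(L_j)=\emptyset$; setting $L:=\bigcup_j(L_j\cup\I(L_j))$, the construction should satisfy: (a) $L$ is disjoint from a small neighborhood of the finite critical set of $h|_V$, so that $\phi_3:=\partial h$ is bounded away from zero on $L$; (b) $S:=M\cup L$ is $\I$-admissible with $R_S=S$ and $C_S=\emptyset$; and (c) $L$ is dense enough in $V\setminus M^\circ$ that any curve from $P_0$ to $bV$ either traverses at least $N$ components of $L$ transversally or zig-zags between them with horizontal length at least $N\cdot w$, where $w$ is the fixed gap width and $N=N(\epsilon)$ is to be specified later. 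On each component of $L_j$ I would extend $X$ to an $\I$-invariant conformal minimal immersion $X^*\in\Mcal_\I(S)$ with $X^*_3=h|_S$, Weierstrass datum $g\in\Gcal_\I(S)$ satisfying $|g|\geq K$ on $L_j$ for a large $K=K(\epsilon)$ (hence $|g|\leq 1/K$ on $\I(L_j)$ by $g\circ\I=-1/\overline{g}$), and flux $\pgot_{X^*}=\pgot|_{\Hcal_1(S,\z)}$. Since each component of $L_j$ is simply connected and disjoint from its $\I$-image, the Weierstrass data $(g,\phi_3|_{L_j})$ can be integrated freely without period obstructions, and the initial condition $X^*_3=h$ fixes the integration constant so that $X^*$ has the desired third coordinate.

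Applying Theorem \ref{th:Mergelyan}-(II) to $X^*$ then produces $Y\in\Mcal_\I(\Ncal)$ with $Y_3=h$, $\pgot_Y=\pgot$, and $\|Y-X^*\|_{1,S}<\delta$ for any prescribed $\delta>0$. Small $\delta$ yields (i)--(iii). Since the Gauss map $g_Y$ of $Y$ is $\Ccal^0$-close to $g$ on $L$, the induced metric $\tfrac12(|g_Y|+1/|g_Y|)|\phi_3|$ of $Y$ on $L$ is bounded below by a quantity comparable to $\tfrac{K}{2}\min_L|\phi_3|$. Both kinds of paths described in (c) therefore acquire $Y$-length at least a fixed constant times $K\cdot w\cdot N$; choosing $K$ and $N$ large enough in terms of $\epsilon$ forces $\mathrm{dist}_Y(P_0,bV)>1/\epsilon$, establishing (iv).

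The main obstacle is the labyrinth construction together with the extension of $X$ to a minimal immersion on $S$ having both the third coordinate pinned to $h$ and $|g|$ arbitrarily large on $L$. The simple connectedness of each labyrinth component removes the only period obstruction, and $\I$-symmetry is handled by making all choices freely on $L_j$ and propagating them automatically to $\I(L_j)$. The delicate numerical balance is between the large factor $K$ inflating the metric on $L$ and the $\Ccal^1$-error $\delta$ permitted by Mergelyan: $\delta$ must be small enough (depending on $K$, $N$, and the geometry of $L$) that the approximation neither spoils the $\Ccal^1$-closeness on $M$ required by (ii) nor the large-$|g_Y|$ estimate on $L$ used to prove (iv).
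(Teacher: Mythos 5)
Your proposal is correct and follows essentially the same route as the paper: reduce the $\chi=-2$ case to $\chi=0$ by adding an $\I$-invariant pair of arcs and applying Theorem~\ref{th:Mergelyan}-(II), then on the $\I$-paired annuli build a labyrinth of compact pieces avoiding the critical set of $h$, extend $X$ to an $\I$-invariant immersion on $S=M\cup L$ with third coordinate $h$ and Gauss map modulus very large on one half of each $\I$-pair (hence very small on the other, keeping $|g|+1/|g|$ large on all of $L$), and conclude with Theorem~\ref{th:Mergelyan}-(II) while balancing the size of $|g|$, the density of the labyrinth, and the $\Ccal^1$ error. One cosmetic remark: the labyrinth used here (and in \cite{AFL-1}) is of Jorge--Xavier type, adapted to the level structure of $dh$, rather than a Nadirashvili-type labyrinth driven by L\'opez--Ros transformations; your description of the obstacle-course property and the use of large $|g|$ matches the Jorge--Xavier mechanism, so the mislabel does not affect the argument.
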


\begin{proof}[Proof of Lemma \ref{lem:coordenada} in case $\chi(V\setminus M^\circ)=0$]
As in the proof of Lemma \ref{lem:fun}, write $V\setminus M^\circ=\cup_{j=1}^\jgot (A_j \cup \I(A_j)),$ where $\jgot\in\n$ denotes the number of boundary components of $V$  and $A_1,\I(A_1),\ldots, A_\jgot,\I(A_\jgot)$ are pairwise disjoint compact annuli. 

On the interior of each annuli $A_j$, we define a labyrinth of compact sets $\Kcal_j$ {\em adapted to $dh$}  as that in the proof of \cite[Claim 3.2]{AFL-1} (this follows the spirit of Jorge-Xavier's original construction of a complete minimal surface in a slab of $\r^3$ \cite{JorgeXavier}). Denote by $\Kcal=\cup_{j=1}^\jgot \Kcal_j\cup\I(\Kcal_j)$ and denote by $S\subset \Ncal$ the $\I$-admissible set
\[
S=M\cup\Kcal.
\]

To finish, we reason as in the proof of \cite[Claim 3.2]{AFL-1}. In a first step we extend $X$ to $S$ as an $\I$-invariant conformal minimal immersion $\hat X=(\hat X_1,\hat X_2,\hat X_3)\colon S\to\r^3$, such that $\hat X_3=h|_S$ and whose intrinsic metric is sufficiently large over $\Kcal$. In order to find a suitable $\hat X$ we first argue as in \cite[Claim 3.2]{AFL-1} to extend $X$ to $\cup_{j=1}^\jgot \Kcal_j$, and then we define $\hat X$ over $\cup_{j=1}^\jgot \I(\Kcal_j)$ to be $\I$-invariant. The proof now can be concluded by applying Theorem \ref{th:Mergelyan}-{\rm(II)} to $\hat X$; cf.  again the proof of \cite[Claim 3.2]{AFL-1}.
\end{proof}

\begin{proof}[Proof of Lemma \ref{lem:coordenada} in case $\chi(V\setminus M^\circ)=-2$.] Let $\gamma\subset V^\circ\setminus M^\circ$ be an analytical Jordan arc attached to $b M$ at its endpoints and otherwise disjoint to $M$, such that $\gamma\cap\I(\gamma)=\emptyset$, $S:=M\cup\gamma\cup\I(\gamma)$ is an $\I$-admissible set in $\Ncal$, and $\chi(V^\circ \setminus S)=0$; see Remark \ref{rem:topology}-2,3,4. Extend $X$ to a generalized marked immersion $\tilde X_\varpi=(\tilde X=(\tilde X_1,\tilde X_2,\tilde X_3),\varpi)\in\Mcal_{\ggot,\I}^*(S)$, satisfying 
$\tilde X_3=h|_S$ and $\pgot_{\tilde X_\varpi}=\pgot|_{\Hcal_1(S,\z)}.$
We then reduce the proof to the case when $\chi(V\setminus M^\circ)=0$, by using Theorem \ref{th:Mergelyan}-{\rm(II)}.
\end{proof}

\begin{proof}[Proof of Theorem \ref{th:coordenada}]
Reasoning as in the proof of Theorem \ref{th:Mergelyan}, we assume without loss of generality that $M$ is connected.
Set $M_0:=M$ and let $\{M_n\}_{n\in\n}$ be an exhaustion of $\Ncal$ by Runge connected $\I$-invariant compact regions such that the Euler characteristic $\chi(M_n^\circ\setminus M_{n-1})\in\{0,-2\}$ for all $n\in\n$; see Remark \ref{rem:topology}. To finish we follow the argument in the proof of \cite[Theorem 4.1]{AFL-1}, replacing \cite[Lemma 3.1]{AFL-1} by Lemma \ref{lem:coordenada}.
\end{proof}

\begin{corollary}\label{co:coordenada}
Let $\pgot\colon\Hcal_1(\Ncal,\z)\to\r^3$ be a group homomorphism such that
$\pgot(\I_*(\gamma))=-\pgot(\gamma)$ for all  $\gamma\in\Hcal_1(\Ncal,\z).$

Then there exists a complete conformal $\I$-invariant minimal immersion $X\colon\Ncal\to\r^3$ such that $\pgot_X=\pgot$, and the complex Gauss map of $\sub X\colon\sub\Ncal\to\r^3$ (see Def.\ \ref{def:gauss-non}) omits one point of $\r\p^2.$
\end{corollary}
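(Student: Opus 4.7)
The plan is to derive Corollary \ref{co:coordenada} by combining Theorems \ref{th:GN} and \ref{th:coordenada} via a careful choice of harmonic function. The key geometric observation is this: if the third Weierstrass $1$-form $\phi_3=\partial X_3$ vanishes nowhere on $\Ncal$, then the nullity identity $(\phi_1+\imath\phi_2)(\phi_1-\imath\phi_2)=-\phi_3^2$ forces $\phi_1\pm\imath\phi_2$ to be nonvanishing as well, so the Gauss map $g=\phi_3/(\phi_1-\imath\phi_2)=-(\phi_1+\imath\phi_2)/\phi_3$ takes values in $\overline{\c}\setminus\{0,\infty\}$. Since $\{0,\infty\}$ collapses to a single class under the antipodal identification $\Agot(z)=-1/\overline z$, the induced map $G\colon\sub\Ncal\to\r\p^2$ then omits the point $\pi_\Agot(\{0,\infty\})\in\r\p^2$. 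Our task therefore reduces to producing a complete $X$ with the prescribed flux and with $\phi_3$ nowhere zero.

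First I would invoke Theorem \ref{th:GN} with trivial divisor $D'=1$ (so $D=1$) and with the period homomorphism $\pgot'\colon\Hcal_1(\Ncal,\z)\to\c$ defined by $\pgot'(\gamma):=\imath\pgot_3(\gamma)$, where $\pgot=(\pgot_1,\pgot_2,\pgot_3)$. The required symmetry $\pgot'(\I_*\gamma)=\overline{\pgot'(\gamma)}$ reduces to $\pgot_3(\I_*\gamma)=-\pgot_3(\gamma)$, which is part of the hypothesis. This yields a nowhere vanishing $\vartheta\in\Omega_{\hgot,\I}(\Ncal)$ with $\int_\gamma\vartheta=\imath\pgot_3(\gamma)$ for every $\gamma\in\Hcal_1(\Ncal,\z)$; in particular all periods are purely imaginary.

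Next, fix $P_0\in\Ncal$ and set $h(p):=2\Re\int_{P_0}^p\vartheta$. Purely imaginary periods make this well defined. To see that $h\circ\I=h$, compute using $\I^*\vartheta=\overline\vartheta$ that $h(\I(p))-h(p)=2\Re\int_{\tilde\gamma}\vartheta$ for any path $\tilde\gamma$ from $P_0$ to $\I(P_0)$. The cycle $\tilde\gamma+\I_*\tilde\gamma\in\Hcal_1(\Ncal,\z)$ is $\I$-invariant, so the hypothesis on $\pgot$ forces $\pgot_3(\tilde\gamma+\I_*\tilde\gamma)=0$; hence the period $\int_{\tilde\gamma+\I_*\tilde\gamma}\vartheta=\int_{\tilde\gamma}\vartheta+\overline{\int_{\tilde\gamma}\vartheta}=2\Re\int_{\tilde\gamma}\vartheta$ equals $\imath\pgot_3(\tilde\gamma+\I_*\tilde\gamma)=0$, proving $\I$-invariance. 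Moreover $\partial h=\vartheta$ is nowhere zero, and $\Im\int_\gamma\partial h=\Im\int_\gamma\vartheta=\pgot_3(\gamma)$ for all $\gamma$.

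Finally I would apply Theorem \ref{th:coordenada} to this non-constant $\I$-invariant harmonic function $h$ together with the prescribed $\pgot$: the compatibility $\pgot_3(\gamma)=\Im\int_\gamma\partial h$ is exactly what was just verified, and the involutive symmetry of $\pgot$ is assumed. This produces a complete conformal $\I$-invariant minimal immersion $X=(X_1,X_2,X_3)\colon\Ncal\to\r^3$ with $X_3=h$ and $\pgot_X=\pgot$. Since $\phi_3=\partial X_3=\vartheta$ has no zeros, the preliminary observation yields that the complex Gauss map of $\sub X$ omits a point of $\r\p^2$, completing the proof. There is no substantive obstacle here beyond routine period bookkeeping: the heavy lifting is done by Theorems \ref{th:GN} and \ref{th:coordenada}, and the only care required is in matching the imaginary periods of $\vartheta$ to the third coordinate of $\pgot$ and in verifying $\I$-invariance of the primitive $h$.
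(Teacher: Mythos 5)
Your proof is correct and follows essentially the same route as the paper: invoke Theorem \ref{th:GN} with trivial divisor to obtain a nowhere-vanishing $\vartheta\in\Omega_{\hgot,\I}(\Ncal)$ with periods matching the third flux component, take $h$ to be the real primitive, and feed $h$ and $\pgot$ into Theorem \ref{th:coordenada}; the paper then simply notes that the Gauss map has neither zeros nor poles and cites Remark \ref{rem:G}. You spell out two points that the paper leaves implicit — the verification that the real primitive of $\vartheta$ is $\I$-invariant, and the deduction from $\phi_3\neq0$ and $\sum\phi_j^2=0$ that $g$ omits the antipodal pair $\{0,\infty\}$ — which is a genuine clarification but not a different argument.
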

\begin{proof}
By Theorem \ref{th:GN}, there exists a nowhere-vanishing holomorphic $1$-form $\vartheta$ on $\Ncal$ such that $\I^*\vartheta=\overline{\vartheta}$ and $\int_\gamma\vartheta =\imath \pgot(\gamma)$ for all $\gamma\in\Hcal_1(\Ncal,\z)$. 
Applying Theorem \ref{th:coordenada} to $h:=\Re\int^P \vartheta$, we get a complete conformal $\I$-invariant minimal immersion $X\colon\Ncal\to\r^3$ such that $\pgot_X=\pgot$, and whose complex Gauss map has neither zeros nor poles. Therefore, the Gauss map of $\sub X\colon\sub\Ncal\to\r^3$ omits one point of $\r\p^2$ (see Remark \ref{rem:G}). This concludes the proof.
\end{proof}

In \cite{AFL-2}, the authors and Fern\'andez extended the results in \cite{AFL-1} to minimal surfaces in $\r^n$, $n\geq 3$. The key tool was a Runge-Mergelyan type theorem for minimal surfaces in $\r^n$. In the forthcoming paper \cite{AL-coordenada-non}, we will show the analogous result in the non-orientable framework; this will allow us to generalize the results in this subsection to non-orientable minimal surfaces in $\r^n$.


\subsection*{Acknowledgments} A.\ Alarc\'{o}n is supported by Vicerrectorado de Pol\'{i}tica Cient\'{i}fica e Investigaci\'{o}n de la Universidad de Granada, and is partially supported by MCYT-FEDER grants MTM2007-61775 and MTM2011-22547, Junta de Andaluc\'{i}a Grant P09-FQM-5088, and the grant PYR-2012-3 CEI BioTIC GENIL (CEB09-0010) of the MICINN CEI Program.

F.\ J.\ L\'{o}pez is partially supported by MCYT-FEDER research projects MTM2007-61775 and MTM2011-22547, and Junta de Andaluc\'{\i}a Grant P09-FQM-5088.



\end{document}